\documentclass[reqno]{amsart}
\usepackage{graphicx}
\usepackage{amscd}
\usepackage{amsmath}
\usepackage{amsfonts}
\usepackage{amssymb}
\usepackage{cite}
\usepackage{xcolor}
\def\R {\mathbb{R}}
\def\C {\mathcal{C}}
\def\G {\mathcal{G}}
\def\c{\mathfrak{C}}
\def\U{\mathcal{U}}
\def\F{\mathcal{F}}
\def\D {\mathbf{D}}
\newtheorem{proposition}{Proposition}[section]
\newtheorem{theorem}[proposition]{Theorem}
\newtheorem{corollary}{Corollary}[section]
\newtheorem{lemma}{Lemma}[section]
\theoremstyle{definition}
\newtheorem{definition}{Definition}[section]
\newtheorem{remark}{Remark}[section]
\numberwithin{equation}{section}
\newtheorem{example}{Example}[section]

\begin{document}

\title[Reconstruction of Parameters of  Subdiffusion]
{Some Unexplored Topics in The Reconstruction\\ of Scalar Parameters
of Subdiffusion}

\author[ S.V. Siryk, L. Tereshchenko, N. Vasylyeva]
{Sergii V. Siryk, Lidiia Tereshchenko and Nataliya Vasylyeva}

\address{CONCEPT Lab, Istituto Italiano di Tecnologia
\newline\indent
Via Morego 30, 16163, Genova, Italy} \email[S.
Siryk]{accandar@gmail.com}

\address{
S.P. Timoshenko Institute of Mechanics of NAS of Ukraine
\newline\indent
Nesterov str.\ 3, 03057 Kyiv, Ukraine} \email[L.
Tereshchenko]{litere70@gmail.com}

\address{Institute of Applied Mathematics and Mechanics of NAS of Ukraine
\newline\indent
G.Batyuka st.\ 19, 84100 Sloviansk, Ukraine;  and
\newline\indent
S.P. Timoshenko Institute of Mechanics of NAS of Ukraine
\newline\indent
Nesterov str.\ 3, 03057 Kyiv, Ukraine}
\email[N.Vasylyeva]{nataliy\underline{\ }v@yahoo.com}

\subjclass[2000]{Primary 35R11, 35R30; Secondary 65N20, 65N21}
\keywords{ multi-term subdiffusion equation, Caputo derivative,
error estimates, inverse problem, regularized algorithm of
reconstruction, quasi-optimality approach, Tikhonov scheme}

\begin{abstract}
In the paper, we discuss the reconstruction of scalar parameters in
a linear diffusion equation with  fractional in time differential
operators and with additional nonlocal (convolution) terms, which
incorporate memory effects in models. Although, under suitable
assumptions on the data, inverse problems associated with recovery
of these parameters are nowadays well understood, several important
questions related with numerical reconstructions of these parameters
via a nonlocal observation in a small time interval have not yet
been analyzed. This paper aims to provide some answers.
\end{abstract}

\maketitle

\section{Introduction}
\label{si}

\subsection*{Statement of the Model}
Modeling of complex dynamical processes in living media requires
accounting memory (nonlocal) effects. The latter is achieved by
means of introducing convolution integrals and/or  fractional
derivatives in the constitutive  relations, that in turn arrives at
the corresponding evolution equations with convolution terms (see
e.g. \cite{DP,GP}) and/or one- or multi-term fractional differential
operators (FDO) (see for details \cite{Ca,CP,ICM,HPSV,KE,PSV1} and
references therein). One important example of these operators which
will be discussed in this paper is
\begin{equation}\label{i.1}
\mathbf{D}_{t}=\begin{cases}
\sum_{i=1}^{M}\rho_{i}(t)\mathbf{D}_{t}^{\nu_{i}},\qquad\text{the
I type FDO},\\
\\
\sum_{i=1}^{M}\mathbf{D}_{t}^{\nu_{i}}\rho_{i}(t),\qquad\text{the
II type FDO},
\end{cases}
\end{equation}
where integer $M\geq 1$,  $0<\nu_{M}<\ldots<\nu_1<1$, and (variable)
coefficients $\rho_i=\rho_i(t)$ are given.
 The symbol $\mathbf{D}_{t}^{\nu_{i}}$ stands for the
(regularized) left fractional Caputo derivative of order
$\nu_i\in(0,1)$ with respect to time defined as
\[
\mathbf{D}_{t}^{\nu_i}u(\cdot,t)=\begin{cases}
\frac{1}{\Gamma(1-\nu_i)}\frac{\partial}{\partial
t}\int_{0}^{t}\frac{u(\cdot,s)-u(\cdot,0)}{(t-s)^{\nu_i}}ds,\qquad
\nu_{i}\in(0,1),\\
\\
\frac{\partial u}{\partial t}(\cdot,t),\qquad\qquad \nu_i=1,
\end{cases}
\]
where $\Gamma$ is the Euler Gamma-function.

Very often in practice, the parameters of media or models are
unknown or scarcely known, and should be identified by, for example,
solving associated inverse problems for governing differential
equations with fractional  or/and integer derivatives.

Setting for a finite terminal positive time $T$ and a bounded domain
$\Omega\subset\R^{n}$ having smooth boundary
$\partial\Omega\in\C^{2+\alpha},$ $\alpha\in(0,1)$,
\[
\Omega_{T}=\Omega\times (0,T)\qquad
\partial\Omega_{T}=\partial\Omega\times[0,T],
\]
we consider the inverse problems concerning with a reconstruction of
scalar parameters (order of fractional derivatives and of a
singularity in a weakly singular convolution kernel) in the linear
equation with the unknown function $u=u(x,t):\Omega_{T}\to\R,$
\begin{equation}\label{i.2}
\mathbf{D}_{t}\text{\textcolor{red}{$u$}} -\mathcal{L}_{1}u-\mathcal{K}*\mathcal{L}_{2}u=g(x,t),
\end{equation}
supplemented with the initial and the Neumann boundary conditions
\begin{equation}\label{i.3}
\begin{cases}
u(x,0)=u_{0}(x)\qquad\qquad\qquad\qquad\text{in}\qquad \bar{\Omega},\\
\mathcal{N}u+(1-\delta)(\mathcal{K}*\mathcal{N}u)=\varphi(x,t)\quad\text{on}\quad
\partial\Omega_{T}
\end{cases}
\end{equation}
with $\delta=0$ or $1$. The functions $g,u_0,\varphi$ are
prescribed below, $\mathcal{K}$ is a memory integrable kernel.

As for the operator involving in \eqref{i.2} and \eqref{i.3},
$\mathcal{L}_{1}$ and $\mathcal{L}_2$ are linear elliptic
operators of the second order with time- and space-depending
coefficients, while $\mathcal{N}$ is the first order differential
operator, whose detailed description is done in Section \ref{s2}.
Here, as usual, the symbol $"*"$ stands for the time convolution
product, i.e.
\[
h_1* h_2=(h_1* h_2)(t)=\int_{0}^{t}h_1(s)h_2(t-s)ds.
\]

The additional observation in the discussed (below) inverse
problems (IPs) has the form
\begin{equation}\label{i.4}
\int_{\Omega}u(x,t)dx=\psi(t),\quad t\in[0,t^{*}]
\end{equation}
with $t^{*}<\min\{1,T\}$.

In this work, we focus on the two inverse problems: the first of
them concerns with the reconstruction of the orders of leading
derivative and the one of  minor derivatives in $\mathbf{D}_{t}$,
that is $\nu_1$ and $\nu_{i^*}$, $i^{*}\in\{2,3,....,M\}$, while the
second problem deals with the recovery of  $\nu_1$ and the order of
the singularity $\gamma$ in the memory kernel $\mathcal{K}$.

\noindent\textit{Statement of the first inverse problem (FIP):} for
the given right-hand sides in \eqref{i.2}, \eqref{i.3}, coefficients
in the operators $\mathbf{D}_{t},$ $\mathcal{L}_{i},$ $\mathcal{N}$
and the orders $\nu_{i},$ $i\neq i^{*}$ and $1$, and the memory
kernel $\mathcal{K},$ the first inverse problem is the
identification of the triple $(\nu_1,\nu_{i^*},u),$ such that
$\nu_1\in(\nu_2,1),$ $\nu_{i^*}\in(\nu_{i^{*}-1},\nu_{i^{*}+1}),$
and $u$ is a unique (global) classical solution of
\eqref{i.1}-\eqref{i.3} satisfying equality \eqref{i.4} for all
$t\in[0,t^{*}]$.

In order to state the second inverse problem (SIP), we rewrite the
memory kernel $\mathcal{K}$ in the form
\begin{equation}\label{i.5}
\mathcal{K}(t)=t^{-\gamma}\mathcal{K}_{0}(t)
\end{equation}
with unknown order $\gamma\in(0,1)$ and given continuous
$\mathcal{K}_{0}(t)$.

\noindent\textit{Statement of the second inverse problem:} for the
given right-hand sides in \eqref{i.2}, \eqref{i.3}, coefficients in
the operators $\mathbf{D}_{t},$ $\mathcal{L}_{i},$ $\mathcal{N}$ and
the orders $\nu_{i},$ $i\neq 1$, and the memory kernel
$\mathcal{K}_0(t),$ the second inverse problem is the identification
of the triple $(\nu_1,\gamma,u),$ such that
$\nu_1\in(\frac{2\nu_2}{2-\alpha},1),$ $\gamma\in(0,1),$ and $u$ is
a unique (global) classical solution of \eqref{i.1}-\eqref{i.3} with
$\mathcal{K}$ having form \eqref{i.5} and, besides, $u$  satisfies
equality \eqref{i.4} for all $t\in[0,t^{*}]$.

\subsection*{Reconstruction of scalar parameters in \eqref{i.2}: Brief
overview.} The pioneering works concerning a reconstruction of
unknown order of the leading fractional derivative $\nu_1$ in the
one-term I type FDO are \cite{ICM} and \cite{HNWY}, where two
different approaches were proposed. Namely, in \cite{ICM},
minimizing of a certain functional depending on a solution of the
initial-boundary value problem for the equation
\begin{equation}\label{i.6}
\mathbf{D}_{t}^{\nu_1}u-a_1\frac{\partial^{2}u}{\partial x^{2}
}=0,\quad x\in\Omega,\quad t\in[0,T],
\end{equation}
and exploiting additional observation for the flux of $u$ on the one
side of $\partial\Omega$ for $t\in[0,T]$, the authors have
reconstructed $\nu_1$. The further developing of this technique is
carried out in the works \cite{JK1,JKi,SLJ,ZJY}, where $\nu_1$ has
been identified in the case of more general linear autonomous
equation similar to \eqref{i.2} without memory terms (i.e.
$\mathcal{K}\equiv 0$) and of the additional measurement either on
the whole time interval $[0,T]$ or for the terminal time $T$.

The second approach being proposed in \cite{HNWY} deals with the
finding explicit formulas to $\nu_1$ via given measurement. Indeed,
in the aforementioned paper, solving the inverse problem for the
linear equation
\begin{equation}\label{i.7}
\mathbf{D}_{t}^{\nu_1}u-\sum_{i,j=1}^{n}\frac{\partial}{\partial
x_{i}}\Big(a_{ij}\frac{\partial u}{\partial
x_{j}}\Big)-a_0(x)u=0\quad\text{in}\quad \Omega_{T},
\end{equation}
and assuming extra regularity of the initial data,
$u_0\in\C_{0}^{\infty}(\Omega)$, the authors derive the explicit
formulas
\begin{equation}\label{i.8}
\nu_1=\underset{t\to 0}{\lim}\frac{t\frac{\partial u}{\partial
t}(x_0,t)}{u(x_0,t)-u_0(x_0)}\quad\text{and}\quad
\nu_1=\underset{t\to \infty}{\lim}\frac{t\frac{\partial u}{\partial
t}(x_0,t)}{u(x_0,t)}
\end{equation}
with some $x_0\in\Omega$.

It is worth noting that the calculations by  explicit formulas
require only the knowledge of the observation. The latter is a
definitely advantage of the second method due to the first technique
needs not only the measurement but also all information on the
coefficients and the right-hand sides in the corresponding direct
problems. Moreover, an additional shortcoming of the first approach
is huge numerical computations carried out in multidimensional
domains. In spite of aforementioned superiority of the second
technique, the formulas \eqref{i.8} involve the derivative, whose
existence is not guaranteed, in general, by the initial-boundary
value problem for \eqref{i.7}.

In the case of a one-dimensional domain $\Omega$, Janno has removed
this disadvantage for the autonomous linear equation \eqref{i.7}
with the memory term $\mathcal{K}*\frac{\partial^{2}u}{\partial
x^{2}}$ in \cite{J1}, where new explicit formula was obtained
\begin{equation}\label{i.9}
\nu_1=\underset{t\to 0}{\lim}\frac{\ln|u(x_0,t)-u(x_0,0)|}{\ln t}.
\end{equation}
Besides, the author has proposed a numerical approximation of
$\nu_1$ by the pre-limit value of \eqref{i.9} at the smallest
observation time $t=t_a$ (specified by a user)
\begin{equation}\label{i.9*}
\nu_{1,a}\approx\nu_1(t_a)=\frac{\ln|u(x_0,t_a)-u(x_0,0)|}{\ln t_a}.
\end{equation}
Further, in \cite{KPSV1}, the formula \eqref{i.9} has been justified
in the case of semilinear nonautonomous equation
\begin{equation}\label{i.10}
\mathbf{D}_{t}^{\nu_1}u-\mathcal{L}_1
u-\mathcal{K}*\mathcal{L}_2u=f(u)+g(x,t)
\end{equation}
in the multidimensional case of $\Omega$. Moreover, in this work,
exploiting the Tikhonov regularization scheme \cite{TG} and the
multiple quasi-optimality criterion \cite{FNP}, the authors have
proposed (for the first time) the approach to recovery (numerically)
the order $\nu_1$ from the noisy discrete observation data. In
brief, this algorithm consists in the two  steps. At first, assuming
that one has observation $u_k=u(x_0,t_{k})$ at the spatial point
$x_0\in\bar{\Omega}$ and time moments $t_k,$ $k=0,1,...,K,$
$0=t_0<t_1<...<t_{K}\leq t^{*},$ and admitting the presence of a
noise interference $\{\varepsilon_{k}\}_{k=1}^{K}$ deteriorating
those measurements, so that is readily, we have
\[
\Psi_{\varepsilon,k}=u_{k}+\varepsilon_{k},\quad
k=1,2,...,K,\quad\text{and}\quad \Psi_{0,0}=u(x_0,0).
\]
First step of this algorithm is the reconstruction of the function
$\Psi_{\varepsilon}(\sigma,t),$ $t\in[0,t_{K}],$ by means of the
regularized regression from the given noisy data $\Psi_{\varepsilon,
k},$ where the regularization is performed in the finite-dimensional
space:
\[
\text{span}\{t^{\beta_{i}},P_{j}^{\text{\textcolor{red}{(}}
0,-\mathrm{a} \text{\textcolor{red}{)}}},\quad i=1,2,..., I,\quad
j=I+1,...,J\},
\]
according to the Tikhonov scheme with the penalty term
$\sigma\|\cdot\|_{L^{2}_{t^{-\mathrm{a}}}(0,t_{K})}$. Here,
$L^2_{t^{-\mathrm{a}}}(0,t_{K})$ is a weighted space
$L^{2}(0,t_{K})$ with an unbounded weight $t^{-\mathrm{a}},$
$\mathrm{a}\in(0,1);$ $P_{j}^{(0,-\mathrm{a})}$ are Jacobi
polynomial shifted to $[0,t_{K}]$. Then, according to the formula
\eqref{i.9}, the approximate value to $\nu_1$ is computed via
\[
\nu_{1}(\sigma,t)=\frac{\ln|\Psi_{\varepsilon}(\sigma,t)-\Psi_{0,0}|}{\ln
t},
\]
the latter is calculated by the sequence of (regularization)
parameters: $\sigma\in\{\sigma_{p}\},$ $t\in\{\bar{t}_{q}\}$. At
last, the regularized reconstructor
\begin{equation}\label{i.9**}
\nu_{1,reg}=\nu_{1}(\bar{\sigma},\bar{t})
\end{equation}
is selected from the set of approximate quantities
$\nu_{1}(\sigma_p,\bar{t}_{q})$ by utilizing the two-parametric
quasi-optimality criterion \cite{FNP} choosing
$\bar{\sigma}\in\{\sigma_{p}\}$ and $\bar{t}\in\{\bar{t}_{q}\}$.
Numerical examples in \cite{KPSV1} have demonstrated efficiency of
both formula \eqref{i.9} and the regularized algorithm in the case
of a low noise level. In fine, we quote the paper \cite{KPSV2},
where another explicit reconstruction formula for $\nu_1$ in
\eqref{i.10} is derived
\begin{equation}\label{i.11}
\nu_1=\underset{t\to
0}{\lim}\frac{t[u(x_0,t)-u(x_0,0)]}{\int_{0}^{t}[u(x_0,s)-u(x_0,0)]ds}-1,
\end{equation}
and the regularized algorithm similar to \cite[Section 10]{KPSV1} is
exploited to numerical computations. As shown by the numerical tests
in \cite[Section 6.2]{KPSV2}, in the case of high noise level the
new recovering formula for the order $\nu_1$ gives more accurate
outputs than \eqref{i.9} and, hence, takes advantage over
\eqref{i.9}.

Finally, we mention that the stability in reconstruction of $\nu_1$
by the local measurement $u(x_0,t),$ $t\in[0,t^{*}],$
$x_0\in\bar{\Omega},$ and the influence of noisy measurement on the
computation of $\nu_1$ are discussed for subdiffusion equations in
\cite{LHY,LLY,KPSV1,KPSV2,JK1}.

Identification of $\nu_1$ or $\nu_{i}$ in the case of the I type FDO
\eqref{i.1} with nonnegative time-independent coefficients is
analyzed in \cite{JK1,LIY,LY,LLY}, where the corresponding inverse
problems to linear equation \eqref{i.2} with $\mathcal{K}\equiv 0$
and with $\mathcal{L}_1$ having time-independent coefficients are
discussed. In particular, in the case of the additional measurement
done at the endpoint (if $\Omega\subset\R$) or in the case of an
interior observation (if $\Omega\subset\R^{n},\, n\geq 2$), the
unique recovery of multiple orders is established in \cite{LY,LIY}.
To this end, the authors exploited the asymptotic behavior of the
multinomial Mittag-Leffler functions. In \cite{LIY}, Li et al prove
the unique reconstruction of orders $\nu_i$ and several coefficients
from data consisting of an appropriate defined Dirichlet-to-Neumann
map, with a special type Dirichlet boundary excitation.

 The extra
conditions on the coefficients in the operators
$\mathbf{D}_{t},\mathcal{L}_i$ (including sign conditions,
time-independence) are relaxed in \cite{PSV2}, where the authors
have reconstructed $\nu_1$ in the case of semilinear equation like
\eqref{i.2} (i.e. $g=g(x,t,u)$) with time- and space-depending
coefficients in $\mathbf{D}_t$, i.e. $\rho_i=\rho_i(x,t)$. In this
work, the authors analyzed the corresponding inverse problems in the
case of all kinds of boundary conditions including a fractional
dynamic boundary condition. The latter means that unknown order
$\nu_1$ is not only in the equation \eqref{i.2} but also in the
boundary condition.  It is worth noting that, in this work, $\nu_1$
is reconstructed by means of either \eqref{i.9} or the slightly
modified formula, and a key tool in the carried out analysis  is the
one-valued global classical solvability of the corresponding direct
problems to linear and semilinear equations \eqref{i.2} (see for
details \cite{PSV1,SV,V1}). In conclusion, we quote \cite{SV1},
where the justification of the recovery formula \eqref{i.13} has
done for the semilinear subdiffusion equations with the fractional
differential operator~\eqref{i.1}.

Coming to the first and  second inverse problems
\eqref{i.1}-\eqref{i.5}, under certain assumptions on the given
data, the explicit formulas for unknown scalar parameters are
derived in \cite{HPV,HPSV}. Namely, assuming
$\Omega=(\mathfrak{l}_{1},\mathfrak{l}_{2})$ in the one-dimensional
case and setting
\begin{align*}\label{i.12}\notag
\mathcal{I}(t)&=\begin{cases}
\int_{\partial\Omega}\varphi(x,t)dx,\quad\qquad\text{ if}\qquad n\geq 2,\\
\varphi(\mathfrak{l}_{2},t)-\varphi(\mathfrak{l}_{1},t),\qquad
\text{if}\quad n=1,
\end{cases}\\\notag
\c_{\nu}(t)&=\int_{\Omega}g(x,t)dx-\delta(\mathcal{K}*\mathcal{I})(t)+a_{0}(t)\psi(t)+(\mathcal{K}*b_{0}\psi)(t)-\mathcal{I}(t),\quad
\c_{\nu,0}=\c_{\nu}(0),
\end{align*}
and
\begin{equation}\label{i.12}
\mathcal{F}_{\nu}(t)=\begin{cases}
\rho_{i^*}^{-1}(t)[\c_{\nu}(t)-\sum\limits_{i=1,i\neq
i^*}^{M}\rho_{i}(t)\D_{t}^{\nu_{i}}\psi(t)]\qquad
\text{in the case of the I type FDO},\\
\c_{\nu}(t)-\sum\limits_{i=1,i\neq
i^*}^{M}\D_{t}^{\nu_{i}}(\rho_{i}(t)\psi(t))\qquad\qquad \text{ in
the case of the II type FDO},
\end{cases}
\end{equation}
\begin{equation}\label{i.12*}
\mathcal{F}_{\gamma}(t)=\int_{\Omega}g(x,t)dx+a_{0}(t)\psi(t)-\mathbf{D}_t\psi-\mathcal{I}(t),
\end{equation}
where $a_0$ and $b_0$ are the coefficients in $\mathcal{L}_1$ and
$\mathcal{L}_2$ at the  function $u$, respectively, unknown scalar
parameters are computed via formulas
\begin{equation}\label{i.13}
\nu_{1}=
\begin{cases}
\underset{t\to
0}{\lim}\frac{\ln|\psi(t)-\int\limits_{\Omega}u_{0}(x)dx|}{\ln
t}\qquad\qquad\qquad\quad\text{in the case of the I type FDO},
\\
\\
\underset{t\to
0}{\lim}\frac{\ln|\rho_{1}(t)\psi(t)-\rho_{1}(0)\int\limits_{\Omega}u_{0}(x)dx|}{\ln
t}\qquad\quad\text{in the case of the II type FDO},
\end{cases}
\end{equation}
and
\begin{equation}\label{i.14}
\nu_{i^*}=\nu_{1}-\log_{\lambda}\bigg|\underset{t\to
0}{\lim}\frac{\mathcal{F}_{\nu}(\lambda
t)}{\mathcal{F}_{\nu}(t)}\bigg|,
\end{equation}
\begin{equation}\label{i.15}
\gamma=1-\log_{\mu}\bigg|\underset{t\to
0}{\lim}\frac{\mathcal{F}_{\gamma}(\mu
t)}{\mathcal{F}_{\gamma}(t)}\bigg|
\end{equation}
with some $\lambda,\mu\in(0,1).$

Moreover, questions concern to the stability, the uniqueness and the
numerical computations via  \eqref{i.13}-\eqref{i.15} are also
explored in the aforementioned  works. We note that, to compute
$(\nu_1,\nu_{i^*})$ and $(\nu_1,\gamma)$ via formulas
\eqref{i.13}-\eqref{i.15}, the two-step regularized recovery
algorithm is incorporated. The first step of this algorithm is very
similar to the technique of computation of $\nu_1$ in the case of
the one-term $\mathbf{D}_t$ written above. As for the second step
dealing with numerical approximation to $\nu_{i^*}$ or $\gamma$, the
authors substitute the computed $\nu_{1,reg}$ into the corresponding
relations in \eqref{i.12} or \eqref{i.12*} and, then, exploit again
the Tikhonov regularization scheme and the quasi-optimality approach
to calculate $\nu_{i^*}$ or $\gamma$ via \eqref{i.14} or
\eqref{i.15}, respectively.

Therefore, having said that the picture is now pretty clear, there
are still some unexplored issues not discussed so far in the
literature. Namely, this paper is addressed to the questions
concerning numerical calculations of $\nu_1$, $\nu_{i^*}$ and
$\gamma$ via \eqref{i.13}-\eqref{i.15}. Obviously, numerical
computations of unknown parameters $\nu_1,\nu_{i^*},\gamma$ can be
done (similarly to \eqref{i.9*}) via numerical approximation,
$\nu_{1,a},\nu_{i^*,a},\gamma_{a}$, of the searched parameters by
the pre-limit values of \eqref{i.13}-\eqref{i.15} at the smallest
user-defined  time $t=t_a$. On this route, the natural question is
\textit{how should the user choose $t_a$ to ensure a given accuracy
of numerical computations?} In this work, we specify the time
interval $[0,T_{a}]$ (where in general  $T_a$ may be  different for
the case of the first and second inverse problems) such that for any
$t_a\in(0,T_a]$, the values $\nu_{1,a},\nu_{i^*,a},\gamma_{a}$ are
computed with the  predetermined  accuracy. It is worth noting that,
the knowledge of $T_a$ is also important for numerical computations
of the unknown parameters in practice, that is if we use regularized
reconstruction approach. Indeed, in this situation $T_a$ indicates
the time points at which observations are made, stating that at
least $t_1$ should belong to $[0,T_a]$.

Another important issue is that available measurements are typically
distorted by noise (and the corresponding noise levels are often
unknown in practice). Thus, in Section~\ref{s8},
 we discuss a practical algorithm for the approximate reconstruction of the integral measurement
  from discrete noisy data. This algorithm relies on the Tikhonov regularization framework and noise-level-free parameter choice rules.
  It is implemented in a finite-dimensional form as a span of orthogonal Jacobi polynomials and power functions
  (the latter facilitates capturing small-time asymptotics). In general, here we follow the guidelines of
  our previous works (on both multi-term FDO problems \cite{HPV,HPSV,PSV2} as well as significantly
  simpler single-term FDO problems \cite{KPSV1,KPSV2}), except for the implementation of the quasi-optimality principle,
  where a new representation that simultaneously incorporates both recovered scalar parameters is proposed.
  Specifically, the sequential two-stage reconstruction algorithms utilized in our previous works \cite{HPV,HPSV}
   are inherently vulnerable to error propagation; computing the primary order $\nu_1$ and substituting this numerical
   approximation to evaluate limits for the secondary parameters (such as $\nu_{i^*}$ or $\gamma$) can adversely amplify
   noise artifacts in the final outputs. To guarantee higher numerical stability, the present approach abandons sequential
    parameter selection in favor of a simultaneous scheme. Namely, by defining a two-component vector
     $\Vec\nu_\delta \equiv (\nu_{1,\delta},\nu_{i^*,\delta})$ (or $\Vec\nu_\delta \equiv (\nu_{1,\delta},\gamma_{\delta})$)
     and employing a weighted Euclidean norm $\|\Vec\nu_\delta\|^\prime$ to evaluate the quasi-optimality criterion,
      the optimal regularization parameters are selected based on the joint quality of both recovered quantities.
      This simultaneous parameter choice acts as a robuster stabilizer, effectively mitigating the compounding
      uncertainties inherent to sequential limit evaluations and improving the overall resilience of the inverse
      problem solution against observation noise.


\subsection*{Outline of the paper} In the next section, we describe
functional spaces along with notations and state the main
assumptions in the model. The main results concerning identification
$T_a$ in the case of the first and second inverse problems (Theorems
\ref{t2.3}-\ref{t2.5}) are given in Section \ref{s2.3}. We note that
Theorem \ref{t2.3} is related with the time interval $[0,T_{I}]$ for
which the error estimate $|\nu_1-\nu_{1,a}|\leq\varepsilon_{I}$
holds for the predetermined $\varepsilon_{I}\in(0,1)$, while
Theorems \ref{t2.4} and \ref{t2.5} indicate the time intervals where
the error estimates for $\nu_{i*}$ and $\gamma$ hold. The proofs of
Theorems \ref{t2.3}-\ref{t2.5} are carried out in Sections
\ref{s4}-\ref{s6}. Section \ref{s3} is devoted to auxiliary
technical results playing a key role in the verification of Theorems
\ref{t2.3}-\ref{t2.5}. In Section \ref{s7}, we discuss some
sufficient conditions on the given data in \eqref{i.1}-\eqref{i.4}
that ensure assumption h7 required in Theorems \ref{t2.4} and
\ref{t2.5}, which in general is implicit requirement on the
right-hand sides of \eqref{i.2}-\eqref{i.3} and on the measurement
$\psi$. In conclusion, the computational approach for simultaneous
regularized reconstruction $(\nu_1,\nu_{i^*})$ and $(\nu_1,\gamma)$
alongside with numerical tests are discussed in Section~\ref{s8}.

\section{Notations and General Hypothesis  in the Model}
\label{s2}

\subsection{Functional setting}\label{s2.1*}
In our analysis, for any nonnegative integer $l$ and for any $p\geq
1,$ $\alpha\in(0,1),$ and any Banach space $(X,\|\cdot\|_{X}),$ we
will use the usual spaces
\[
\C([0,T],X),\quad \C^{l+\alpha}(\bar{\Omega}),\quad
\C^{l+\alpha}([0,T]),\quad L^{p}(0,T),
\]
and for $l=0,1,2,$ and $\nu\in(0,1),$ we will exploit the fractional
H\"{o}lder spaces
\[
\C_{\nu}^{\alpha}([0,T])\qquad
\C^{l+\alpha,\frac{l+\alpha}{2}\nu}(\bar{\Omega}_{T}).
\]
The detail properties of $\C^{l+\alpha,\frac{l+\alpha}{2}\nu}$,
which first was introduced in \cite{KV}, are described in
\cite[Section 2]{KPV1}, here below we  only recall the definition of
these fractional spaces.
\begin{definition}\label{d2.1}
The functions $v=v(x,t)$ and $w=w(t)$ belong to the classes
$\C^{l+\alpha,\frac{l+\alpha}{2}\nu}(\bar{\Omega}_{T})$, $l=0,1,2,$
 and $\C_{\nu}^{\alpha}([0,T]),$ respectively, if $v$ and $w$
 together with their corresponding derivatives are continuous on
 $\bar{\Omega}_{T}$ and $[0,T],$ respectively, and the norms below
 are finite
\begin{align*}
\|v\|_{\C^{l+\alpha,\frac{l+\alpha}{2}\nu}(\bar{\Omega}_{T})}&=
\begin{cases}
\|v\|_{\C([0,T],\C^{l+\alpha}(\bar{\Omega}))}+\sum\limits_{|j|=0}^{l}\langle
D_{x}^{j}v\rangle_{t,\Omega_{T}}^{(\frac{l+\alpha-|j|}{2}\nu)},\qquad\qquad\qquad\qquad\qquad
l=0,1,\\
\|v\|_{\C([0,T],\C^{2+\alpha}(\bar{\Omega}))}+\|\D_{t}^{\nu}v\|_{\C^{\alpha,\frac{\nu\alpha}{2}}(\bar{\Omega}_{T})}+\sum\limits_{|j|=1}^{2}\langle
D_{x}^{j}v\rangle_{t,\Omega_{T}}^{(\frac{2+\alpha-|j|}{2}\nu)},\qquad
l=2,
\end{cases}
\\
\|w\|_{\C_{\nu}^{\alpha}([0,T])}&=\|w\|_{\C([0,T])}+\|\mathbf{D}_{t}^{\nu}w\|_{\C([0,T])}+\langle\mathbf{D}_{t}^{\nu}w\rangle_{t,[0,T]}^{(\alpha)},
\end{align*}
where $\langle \cdot\rangle_{t,\Omega_{T}}^{(\alpha)}$, $\langle
\cdot\rangle_{t,[0,T]}^{(\alpha)}$ and $\langle
\cdot\rangle_{x,\Omega_{T}}^{(\alpha)}$ stand for the standard
H\"{o}lder seminorms of the function  with respect to time and space
variables, respectively.
\end{definition}
\noindent In a similar way, for $l=0,1,2,$ the spaces
$\C^{l+\alpha,\frac{l+\alpha}{2}\nu}(\partial\Omega_{T})$ is
defined.

In this work, we will also use the Hilbert space
$L_{\varrho}^{2}(t_1,t_2)$ of real-valued square integrable
functions with a positive weight $\varrho=\varrho(t)$ on
$(t_1,t_2).$ The inner product and the norm in
$L_{\varrho}^{2}(t_1,t_2)$ are defined as
\[
\langle
u,v\rangle_{L_{\varrho}^{2}}=\int_{t_1}^{t_2}\varrho(t)v(t)u(t)dt\qquad
\text{and}\quad\|v\|^{2}_{L_{\varrho}^{2}}=\langle
v,v\rangle_{L_{\varrho}^{2}}.
\]

Throughout this paper, we also use  notation $x^*$ for the minimum
point of Gamma function if $x\geq 0$, that is
\[
\Gamma(1+x^*)=\underset{x\geq 0}{\min}\, \Gamma(x),\qquad
x^{*}\approx 0.4616.
\]

\subsection{General assumptions in the model}\label{s2.2}
\begin{description}
\item[h1. Conditions on the operators]
The operators $\mathcal{L}_{i}$ and $\mathcal{N}$  are defined as
\begin{align*}
\mathcal{L}_{1}& = \sum_{ij=1}^{n}\frac{\partial}{\partial
x_{i}}\Big( b_{ij}(x,t)\frac{\partial}{\partial x_{j}}\Big)
+a_{0}(t),\quad \mathcal{L}_{2} =
\sum_{ij=1}^{n}\frac{\partial}{\partial x_{i}}\Big(
b_{ij}(x,t)\frac{\partial}{\partial x_{j}}\Big) +b_{0}(t),\\
\mathcal{N}&=
-\sum_{ij=1}^{n}b_{ij}(x,t)N_{i}\frac{\partial}{\partial x_{j}}
\end{align*}
with $\mathbf{N}=\{N_{1},...,N_{n}\}$ being the unit outward normal
vector to $\Omega$.

\noindent We require that there exist positive constants
$\varrho_{2}>\varrho_1>0,$ such that
     \begin{equation*}
\varrho_{1}|\xi|^{2}
\leq\sum_{ij=1}^{n}b_{ij}(x,t)\xi_{i}\xi_{j}\leq\varrho_{2}|\xi|^{2}
    \end{equation*}
   for any $(x,t,\xi)\in\bar{\Omega}_{T}\times \mathbb{R}^{n}$.
    \item[h2. Conditions on the FDO in \eqref{i.1}] We assume that
\begin{equation*}
\nu_{1}\in\Big(\frac{2\nu_2}{2-\alpha},1\Big)\quad\text{and}\quad
 0<\nu_M<...<\nu_2<1,
\end{equation*}
and, besides, there is  a positive constant $\varrho_{3}$ such that
 \[
\rho_{1}(t)\geq \varrho_{3}>0
 \]
for all $t\in[0,T]$.
    \item[h3. Smoothness of   the coefficients in \eqref{i.1} and \eqref{i.2}] For
    $i,j=1,\ldots,n,$ $k=1,\ldots,M,$ and $\alpha\in(0,1),$ $\nu\in(1,1+\alpha/2),$
    there hold
    \begin{equation*}
  a_{0},
  b_{0}\in
    \C^{\frac{\alpha}{2}}([0,T]),\quad
a_{ij}(x,t)\in
 \C^{1+\alpha,\frac{1+\alpha}{2}}(\bar{\Omega}_{T}),\quad
 \rho_{k}\in\C^{\nu}([0,T]).
    \end{equation*}
             \item[h4. Regularity of the given functions]
       \[
       \mathcal{K}\in L^{1}(0,T),\quad
 \varphi\in\C^{1+\alpha,\frac{1+\alpha}{2}}(\partial\Omega_{T}),\quad
u_{0}\in C^{2+\alpha}(\bar{\Omega}), \quad
g\in\C^{\alpha,\frac{\alpha}{2}}(\bar{\Omega}_{T}).
    \]
    \item[h5. Condition on the additional measurement]
    We require that $\psi\in\C([0,t^{*}])$ has $M$-fractional (left)
    regularized Caputo derivatives of order $\nu_i,$ $i=1,..,M$, and all
    these derivatives are H\"{o}lder continuous.
    \item[h6. Compatibility conditions] For every
         $x\in\partial\Omega$ at the initial time
     $t=0$, there holds
       \begin{equation*}
\mathcal{N}u_{0}(x)|_{t=0}=\varphi(x,0).
    \end{equation*}
    \end{description}

 Throughout this paper, bering in mind assumption h4, we use the notation
\begin{equation}\label{2.0}
\mathcal{R}:=\mathcal{R}(g,u_0,\varphi)=\|g\|_{\C^{\alpha,\alpha/2}(\bar{\Omega}_{T})}+\|\varphi\|_{\C^{1+\alpha,\frac{1+\alpha}{2}}(\partial\Omega_{T})}+
\|u_0\|_{\C^{2+\alpha}(\bar{\Omega})}
\end{equation}

\section{Statement of the Main Results}\label{s2.3}

\noindent First, for reader's convenience, we recall the results
concerning with the solvability of the first and second inverse
problems, which are established in \cite{HPV,HPSV} and are reported
here below in forms (and notations) tailored for our goals. The
first claim subsumes Theorem 2.2 and Lemma 3.4 in \cite{HPSV}.
\begin{theorem}\label{t2.1}
Let positive  $T$ be arbitrary but finite,  and let assumptions
h1-h6 hold. If $\c_{\nu,0}\neq 0$ and $\rho_{i^*}(t)\neq 0$ for all
$t\in[0,t^{*}]$, then the first inverse problem
\eqref{i.1}-\eqref{i.4} has a unique solution
$(\nu_{1},\nu_{i^*},u)$ with $\nu_{1}$ and $\nu_{i^*}$ being
computed via formulas \eqref{i.13} and \eqref{i.14}, respectively,
and with $u$ being a unique global classical solution of the direct
problem \eqref{i.1}-\eqref{i.3} and satisfying \eqref{i.4} for all
$t\in[0,t^*]$. Besides,
\begin{equation}\label{2.1}
\|u\|_{\C^{2+\alpha,\frac{2+\alpha}{2}\nu_1}(\bar{\Omega}_T)}
+\sum_{i=2}^{M}\|\mathbf{D}_{t}^{\nu_{i}}u\|_{\C^{\alpha,\frac{\alpha\nu_1}{2}}(\bar{\Omega}_T)}\leq
C_0\mathcal{R}(g,u_0,\varphi),
\end{equation}
where the positive quantity $C_0$ depends only on $T$, the Lebesgue
measure of $\Omega$, $\|\mathcal{K}\|_{L^1(0,T)}$ and the
corresponding norms of the coefficients in the operators.

Moreover, for any $T_0\in(0,T]$ and each $t\in[0,T_0],$ the
following holds
\begin{equation}\label{2.2}
\int_{\Omega}u(x,t)dx\in\C^{\frac{\alpha\nu_{1}}{2}}_{\nu_1}([0,T_0]),\,
\D_{t}^{\nu_i}\int_{\Omega}u(x,t)dx\in\C^{\frac{\alpha\nu_{1}}{2}}([0,T_0]),\,
i=2,...,M,\,\mathbf{D}_t\int_{\Omega}u(x,t)dx=\c_{\nu}(t).
\end{equation}
In particular, if $T_0=t^*$, then
\begin{equation}\label{2.2*}
\psi\in\C^{\frac{\alpha\nu_{1}}{2}}_{\nu_1}([0,t^*]),\,
\D_{t}^{\nu_i}\psi\in\C^{\frac{\alpha\nu_{1}}{2}}([0,t^*]),\,
i=2,...,M,\,\mathbf{D}_t\psi=\c_{\nu}(t)
\end{equation}
for all $t\in[0,t^*].$
\end{theorem}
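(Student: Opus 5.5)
The plan is to split the argument into three parts. First, the direct problem: well-posedness and the estimate \eqref{2.1}. Second, the integrated identity \eqref{2.2}. Third, the recovery of $\nu_1$ and $\nu_{i^*}$ from the small-time asymptotics of $\psi$, which in turn gives uniqueness.

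\emph{Direct problem.} For fixed admissible orders satisfying h2, I would invoke the global classical solvability theory for multi-term subdiffusion equations with memory terms in the fractional H\"older classes (\cite{PSV1,SV,V1}, as used in \cite{HPSV}). This yields a unique $u\in\C^{2+\alpha,\frac{2+\alpha}{2}\nu_1}(\bar\Omega_T)$ satisfying \eqref{2.1}.

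The condition $\nu_1>2\nu_2/(2-\alpha)$ is used here. It lets the lower-order terms $\rho_i\D_t^{\nu_i}u$, $i\geq2$, be treated as perturbations of the leading term. Indeed, $\D_t^{\nu_i}u$ is then H\"older of order $\alpha\nu_1/2$ in time. The memory term $\mathcal{K}*\mathcal{L}_2u$ is handled by a Gronwall-type argument using $\mathcal{K}\in L^1$.

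\emph{Integrated identity.} I would integrate \eqref{i.2} over $\Omega$. The divergence form of $\mathcal{L}_i$ together with the boundary condition \eqref{i.3} converts $\int_\Omega \mathcal{L}_1u+\mathcal{K}*\mathcal{L}_2u\,dx$ into
\[
a_0\!\int_\Omega u\,dx+\mathcal{K}*\Big(b_0\!\int_\Omega u\,dx\Big)-\mathcal{I}-\delta\,\mathcal{K}*\mathcal{I}.
\]
Since $\D_t$ commutes with $\int_\Omega$, this gives $\D_t\int_\Omega u\,dx=\c_\nu(t)$. The regularity statements in \eqref{2.2} follow from \eqref{2.1}, because integration in $x$ preserves H\"older continuity in time. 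Taking $T_0=t^*$ and using \eqref{i.4} gives \eqref{2.2*}.

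\emph{Formulas for the orders.} I would apply the fractional integral $I^{\nu_1}$ and use the smoothness of $\rho_i$ from h3. This gives
\[
\psi(t)-\psi(0)=\frac{\c_{\nu,0}}{\rho_1(0)\Gamma(1+\nu_1)}t^{\nu_1}+o(t^{\nu_1}),
\]
where the remainder collects the lower-order terms, which are $O(t^{\nu_1+(\nu_1-\nu_2)})$, and the H\"older increments. Since $\c_{\nu,0}\neq0$, taking logarithms yields \eqref{i.13}; the II-type operator is handled in the same way for $\rho_1\psi$.

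For $\nu_{i^*}$, the definition \eqref{i.12} gives $\mathcal{F}_\nu=\D_t^{\nu_{i^*}}\psi$ in the I-type case, and analogously in the II-type case. Substituting the expansion of $\psi$ gives
\[
\mathcal{F}_\nu(t)\sim C\,t^{\nu_1-\nu_{i^*}},\qquad C\neq0.
\]
Hence the ratio $\mathcal{F}_\nu(\lambda t)/\mathcal{F}_\nu(t)$ tends to $\lambda^{\nu_1-\nu_{i^*}}$, which is \eqref{i.14}; the hypothesis $\rho_{i^*}\neq0$ is what makes the division in \eqref{i.12} legitimate.

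\emph{Uniqueness.} The formulas \eqref{i.13} and \eqref{i.14} express the orders solely through the data, so they are uniquely determined. Uniqueness of $u$ then follows from the direct problem.

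\emph{Main obstacle.} I expect the hardest step to be the precise leading-order asymptotics of $\D_t^{\nu_{i^*}}\psi$ in the multi-term setting with variable coefficients. The remainders must be shown to be genuinely of lower order than $t^{\nu_1-\nu_{i^*}}$. I would control them using the H\"older regularity in \eqref{2.2*} together with explicit Beta-function computations of $\D_t^{\nu}t^{\beta}$.
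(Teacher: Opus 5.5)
Your proposal is correct. For the direct problem, the integrated identity and formula \eqref{i.13} it follows the same lines as the source the paper relies on: the paper does not prove Theorem \ref{t2.1} itself but imports it from \cite{HPSV}, and your $\nu_1$-asymptotics is the representation reproduced in the proof of Lemma \ref{l3.1}.

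Where you genuinely differ is formula \eqref{i.14}. The route recalled in Section \ref{s5} (see \eqref{5.5}) rests on the exact identity $\mathcal{F}_\nu(t)=t^{\nu_1-\nu_{i^*}}\mathcal{G}_{\mathcal{U}}(t,\nu_1-\nu_{i^*},n^*)$. Here $\mathcal{G}$ has a Mittag-Leffler kernel and the auxiliary function satisfies $\mathcal{U}(0,n^*)\neq 0$; one then shows $\mathcal{G}_{\mathcal{U}}(\lambda t)/\mathcal{G}_{\mathcal{U}}(t)\to 1$. You instead observe that $\mathcal{F}_\nu=\mathbf{D}_t^{\nu_{i^*}}\psi$ (resp. $\mathbf{D}_t^{\nu_{i^*}}(\rho_{i^*}\psi)$) and read off the leading power.

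As literally stated, ``substituting the expansion of $\psi$'' is not enough: a pointwise $o(t^{\nu_1})$ remainder gives no control of its Caputo derivative. The obstacle you single out disappears once you use the composition rule $\mathbf{D}_t^{\nu_{i^*}}\psi=I^{\nu_1-\nu_{i^*}}\mathbf{D}_t^{\nu_1}\psi$, valid because $\mathbf{D}_t^{\nu_1}\psi$ is continuous. Splitting off $\mathbf{D}_t^{\nu_1}\psi(0)=\c_{\nu,0}/\rho_1(0)$ and using the $\frac{\alpha\nu_1}{2}$-H\"older bound in \eqref{2.2*} gives
\[
\mathcal{F}_\nu(t)=\frac{\c_{\nu,0}}{\rho_1(0)\Gamma(1+\nu_1-\nu_{i^*})}\,t^{\nu_1-\nu_{i^*}}+O\big(t^{\nu_1-\nu_{i^*}+\alpha\nu_1/2}\big).
\]
In the II-type case the constant picks up the factor $\rho_{i^*}(0)$, by the product rule as in the proof of Lemma \ref{l7.1} together with Corollary \ref{c2.1}. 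So there $\rho_{i^*}(0)\neq 0$ is what makes $C\neq 0$, not a division.

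The paper's identity is in fact the resolvent form of your composition rule. By definition $\mathbf{D}_t^{\nu_1}\psi=n(\mathcal{U}-\mathcal{F}_\nu)$ in the I-type case, so $\mathcal{F}_\nu+nI^{\nu_1-\nu_{i^*}}\mathcal{F}_\nu=nI^{\nu_1-\nu_{i^*}}\mathcal{U}$. Moreover $\mathcal{U}(0,n)=\c_{\nu,0}/(n\rho_1(0))$ is the same nonvanishing quantity as yours.

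Your version is shorter and more transparent. The resolvent form buys an identity exact for all $t$, with the nonvanishing datum isolated in $\mathcal{U}(0,n^*)$. This is what Lemma \ref{l3.2} exploits to produce the explicit threshold $T_{II}$ of Theorem \ref{t2.4}. Your expansion would give comparable quantitative bounds in terms of $|\mathbf{D}_t^{\nu_1}\psi(0)|$ and the H\"older seminorm of $\mathbf{D}_t^{\nu_1}\psi$.
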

In the case of the second inverse problem \eqref{i.1}-\eqref{i.5},
setting
\begin{equation}\label{c.1}
\c_1:=\c_1(t)=\delta\mathcal{I}(t)-b_0(t)\psi(t),\qquad
\c_{1,0}=\c_{1}(0),
\end{equation}
and subsuming Theorem 2.3 and Lemma 4.1 \cite{HPV}, we end up with
the claim.
\begin{theorem}\label{t2.2}
Let arbitrary $T>$ be finite and $\c_{\nu,0},\c_{1,0}\neq 0$. We
also assume that $\mathcal{K}(t)$ has the form \eqref{i.5} with
$\mathcal{K}_{0}(0)\neq 0$ and $\mathcal{K}_{0}\in
L^{1}(0,T)\cap\C([0,t^{*}])$. Under assumptions h1-h6, the second
inverse problem \eqref{i.1}-\eqref{i.5} admits a unique solution
$(\nu_1,\gamma,u)$ with $\nu_1$ and $\gamma$ being calculated via
formulas \eqref{i.13} and \eqref{i.15}, and with
$u\in\C^{2+\alpha,\frac{2+\alpha}{2}\nu_1}(\bar{\Omega}_{T})$ being
a unique global classical solution of the direct problem
\eqref{i.1}-\eqref{i.3} and satisfying \eqref{i.4} for any
$t\in[0,t^*]$. Besides, relations \eqref{2.1}-\eqref{2.2*} hold.
\end{theorem}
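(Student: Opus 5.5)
The plan is to deduce Theorem \ref{t2.2} from two ingredients, as Theorem \ref{t2.1} was obtained: the global classical solvability of the direct problem \eqref{i.1}--\eqref{i.3}, and the small-time asymptotics of the integral observation $\psi$, which single out $\nu_1$ and $\gamma$. Uniqueness then comes for free: $\nu_1$ and $\gamma$ are written as limits depending only on $\psi$ and the given data, and once they are fixed, $u$ is unique by uniqueness for the direct problem.

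\emph{Step 1 (direct problem).} Fix any admissible pair $(\nu_1,\gamma)$. Then $\mathcal{K}=t^{-\gamma}\mathcal{K}_0\in L^1(0,T)$ because $\gamma<1$ and $\mathcal{K}_0\in L^1(0,T)\cap\C([0,t^*])$. Hence the same well-posedness result behind \eqref{2.1}, taken from \cite{PSV1,SV,V1} and used in Theorem \ref{t2.1}, gives a unique solution $u\in\C^{2+\alpha,\frac{2+\alpha}{2}\nu_1}(\bar\Omega_T)$ with the bound \eqref{2.1}.

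\emph{Step 2 (integrated equation).} Integrate \eqref{i.2} over $\Omega$ and use the divergence form of $\mathcal{L}_i$, the boundary condition \eqref{i.3} and \eqref{i.4}. Because $b_{ij}$ is common to $\mathcal{L}_1$ and $\mathcal{L}_2$, the boundary integrals collapse to $\mathcal{I}$ and $\mathcal{K}*\mathcal{I}$. This gives the identity $\mathbf{D}_t\psi=\c_\nu(t)$ together with the regularity statements \eqref{2.2}, \eqref{2.2*}. Rearranging the same identity with \eqref{i.12*} and \eqref{c.1} yields
\[
\mathcal{F}_\gamma(t)=-(\mathcal{K}*\c_1)(t)=-\int_0^t s^{-\gamma}\mathcal{K}_0(s)\,\c_1(t-s)\,ds .
\]

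\emph{Step 3 (formula for $\nu_1$).} From $\mathbf{D}_t\psi=\c_\nu$, $\rho_1(0)\geq\varrho_3>0$ and the Hölder continuity of $\c_\nu$, invert the leading Caputo derivative. The lower-order terms $\mathbf{D}_t^{\nu_i}$, $i\geq2$, contribute at order $t^{2\nu_1-\nu_2}$; this is $o(t^{\nu_1})$ since $\nu_1>\nu_2$. The result is
\[
\psi(t)-\int_\Omega u_0\,dx=\frac{\c_{\nu,0}}{\rho_1(0)\Gamma(1+\nu_1)}t^{\nu_1}\bigl(1+o(1)\bigr),
\]
with the analogous statement for the II type FDO. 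Since $\c_{\nu,0}\neq0$, taking logarithms gives \eqref{i.13}.

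\emph{Step 4 (formula for $\gamma$).} Use the continuity of $\mathcal{K}_0$ and $\c_1$ near $0$, with $\mathcal{K}_0(0)\c_{1,0}\neq0$. This gives
\[
\mathcal{F}_\gamma(t)=-\frac{\mathcal{K}_0(0)\c_{1,0}}{1-\gamma}t^{1-\gamma}\bigl(1+o(1)\bigr),
\]
so $\mathcal{F}_\gamma(\mu t)/\mathcal{F}_\gamma(t)\to\mu^{1-\gamma}$, which is exactly \eqref{i.15}.

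The main obstacle is Step 3: the remainder has to be controlled using only the fractional Hölder regularity from \eqref{2.2*}. For the I type operator this means estimating $\rho_1(t)-\rho_1(0)$ and the $\mathbf{D}_t^{\nu_i}\psi$ terms. For the II type operator one has to commute $\mathbf{D}_t^{\nu_1}$ with $\rho_1$, which is why the formula involves $\rho_1\psi$. The restriction $\nu_1>2\nu_2/(2-\alpha)$ in h2 is what guarantees the needed Hölder regularity of $\mathbf{D}_t^{\nu_i}\psi$.
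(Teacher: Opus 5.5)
Your proposal is correct and follows the route the paper takes via \cite{HPV}: global solvability of the direct problem, then integration over $\Omega$ to get $\mathbf{D}_t\psi=\c_\nu$, then the small-time asymptotics of $\psi$ (as in Lemma \ref{l3.1}) for $\nu_1$, and finally the convolution representation $\mathcal{F}_\gamma(t)=t^{1-\gamma}\int_0^1 z^{-\gamma}\mathcal{K}_0(tz)\c_1(t(1-z))\,dz$ for $\gamma$. The only slip is the sign in Step 2: with $\c_1=\delta\mathcal{I}-b_0\psi$ one gets $\mathcal{F}_\gamma=+\mathcal{K}*\c_1$, which is harmless because only the modulus of $\mathcal{F}_\gamma(\mu t)/\mathcal{F}_\gamma(t)$ enters \eqref{i.15}.
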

Recasting the arguments leading to \cite[Remark 6.4]{HPSV} and
exploiting \cite[Lemmas 5.5-5.6]{SV} along with
\eqref{2.1}-\eqref{2.2*} arrive at the following results.
\begin{corollary}\label{c2.1}
Let assumptions of either Theorem \ref{t2.1} or Theorem \ref{t2.2}
hold. Then, for any $i=1,2,...,M$ and any
$\alpha_0\in(0,\frac{\alpha\nu_1}{2}]$ there are the following
estimates:
\begin{align*}
\|\mathbf{D}_{t}^{\nu_i}\psi\|_{\C^{\alpha_0}([0,t^*])}&\leq
C_0|\Omega|\mathcal{R}(g,u_0,\varphi),\\
\langle\mathbf{D}_{t}^{\nu_i}(\rho_i\psi)\rangle_{t,[0,t^*]}^{(\alpha_0)}&\leq
C_1\|\rho_i\|_{\C^{\nu}([0,t^*])}\|\mathbf{D}_{t}^{\nu_i}\psi\|_{\C^{\alpha_0}([0,t^*])}
\leq
C_1C_0|\Omega|\|\rho_i\|_{\C^{\nu}([0,t^*])}\mathcal{R}(g,u_0,\varphi),\\
\|\mathbf{D}_{t}^{\nu_i}(\rho_i\psi)\|_{\C^{\alpha_0}([0,t^*])}&\leq
C_2\|\rho_i\|_{\C^{\nu}([0,t^*])}\|\mathbf{D}_{t}^{\nu_i}\psi\|_{\C^{\alpha_0}([0,t^*])}
\leq
C_2C_0|\Omega|\|\rho_i\|_{\C^{\nu}([0,t^*])}\mathcal{R}(g,u_0,\varphi),\\
\|\mathbf{D}_{t}\psi\|_{\C^{\alpha_0}([0,t^*])}&=\|\c_\nu\|_{\C^{\alpha_0}([0,t^*])}\leq
C_3\mathcal{R}(g,u_0,\varphi),
\end{align*}
where the positive quantities $C_1$ and $C_2$ depend only on $t^{*}$
and $\nu$, while
\[
C_3=C_0|\Omega|\max\{1,C_2\}\sum_{i=1}^{M}\|\rho_i\|_{\C^{\nu}([0,t^*])}.
\]
\end{corollary}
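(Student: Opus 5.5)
The plan is to derive all four estimates from the a priori bound \eqref{2.1} together with \eqref{2.2*}, using $\psi(t)=\int_\Omega u(x,t)\,dx$ from \eqref{i.4}.

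For the first estimate, note that integrating in $x$ commutes with the Caputo derivative in $t$, since $u$ is classical. Hence
\[
\mathbf{D}_t^{\nu_i}\psi(t)=\int_\Omega \mathbf{D}_t^{\nu_i}u(x,t)\,dx .
\]
For $i\geq 2$, \eqref{2.1} gives $\mathbf{D}_t^{\nu_i}u\in\C^{\alpha,\frac{\alpha\nu_1}{2}}(\bar\Omega_T)$. For $i=1$, the definition of the norm in $\C^{2+\alpha,\frac{2+\alpha}{2}\nu_1}$ contains $\|\mathbf{D}_t^{\nu_1}u\|_{\C^{\alpha,\frac{\nu_1\alpha}{2}}}$. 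In both cases the sup norm and the time Hölder seminorm of exponent $\frac{\alpha\nu_1}{2}$ are bounded by $C_0\mathcal{R}$. Integrating over $\Omega$ multiplies these by $|\Omega|$.

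For any $\alpha_0\le\frac{\alpha\nu_1}{2}$, the $\alpha_0$-seminorm on $[0,t^*]$ is controlled by the $\frac{\alpha\nu_1}{2}$-seminorm, because $|t-s|^{\frac{\alpha\nu_1}{2}-\alpha_0}\le (t^*)^{\frac{\alpha\nu_1}{2}-\alpha_0}\le1$ when $t^*<1$. This is exactly why $t^*<\min\{1,T\}$ is imposed. Together with the sup-norm bound, this yields $\|\mathbf{D}_t^{\nu_i}\psi\|_{\C^{\alpha_0}}\le C_0|\Omega|\mathcal{R}$.

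For the second and third estimates, we need a product rule for the Caputo derivative with a $\C^\nu$ multiplier, $\nu\in(1,1+\alpha/2)$. This is the content of \cite[Lemmas 5.5--5.6]{SV}, which I would invoke directly. Heuristically, one writes
\[
\mathbf{D}_t^{\nu_i}(\rho_i\psi)=\rho_i\,\mathbf{D}_t^{\nu_i}\psi+\text{(commutator)}.
\]
The commutator is
\[
\frac{1}{\Gamma(1-\nu_i)}\int_0^t (t-s)^{-\nu_i-1}\bigl(\rho_i(t)-\rho_i(s)\bigr)\bigl(\psi(s)-\psi(0)\bigr)\,ds
\]
plus a boundary term, or an equivalent representation using $\rho_i'\in\C^{\nu-1}$. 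This term is Hölder continuous with constants depending only on $t^*,\nu$, times $\|\rho_i\|_{\C^\nu}\|\mathbf{D}_t^{\nu_i}\psi\|_{\C^{\alpha_0}}$.

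Here we use that $\psi-\psi(0)=I^{\nu_i}\mathbf{D}_t^{\nu_i}\psi$ is controlled by $\mathbf{D}_t^{\nu_i}\psi$. This holds since $\psi$ has all $M$ Caputo derivatives, by \eqref{2.2*}. Once the product bound is in hand, substituting the first estimate gives the second and third.

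The fourth estimate is an equality plus a bound. The equality $\mathbf{D}_t\psi=\c_\nu$ is \eqref{2.2*}. For the bound, expand $\mathbf{D}_t\psi$ via \eqref{i.1} as a sum over $i$ of either $\rho_i\mathbf{D}_t^{\nu_i}\psi$ (type I) or $\mathbf{D}_t^{\nu_i}(\rho_i\psi)$ (type II). Each type II term is handled by the third estimate. Each type I term is bounded by $\|\rho_i\|_{\C^{\alpha_0}}\|\mathbf{D}_t^{\nu_i}\psi\|_{\C^{\alpha_0}}\le\|\rho_i\|_{\C^\nu}C_0|\Omega|\mathcal{R}$. Taking the larger of the two constants gives the factor $\max\{1,C_2\}$, and summing over $i$ produces $C_3$.

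The main obstacle is the product/commutator estimate in the second and third bounds. Specifically, one must verify that the Hölder exponent $\alpha_0$ survives multiplication by $\rho_i$, uniformly in $\nu_i\in(0,1)$, and that the constant depends only on $t^*$ and $\nu$. I would rely on \cite{SV} for this rather than reprove it. If that reference did not cover it, I would fall back on the integral representation of the commutator above and split the integral near and away from the diagonal.
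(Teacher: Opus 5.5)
Your route coincides with the paper's, which likewise just combines \eqref{2.1}--\eqref{2.2*} with the product lemmas of \cite{SV}. Your first estimate (integrate over $\Omega$, then lower the exponent using $t^*<1$) and the type~I part of the fourth are correct. The step that fails is your claim that everything beyond $\rho_i\mathbf{D}_t^{\nu_i}\psi$ is bounded by $\|\rho_i\|_{\C^{\nu}}\|\mathbf{D}_t^{\nu_i}\psi\|_{\C^{\alpha_0}}$. The product formula quoted in the proof of Lemma~\ref{l7.1} reads
\[
\mathbf{D}_t^{\nu_i}(\rho_i\psi)=\rho_i\mathbf{D}_t^{\nu_i}\psi+\psi(0)\mathbf{D}_t^{\nu_i}\rho_i+\frac{\nu_i}{\Gamma(1-\nu_i)}\int_0^t\frac{[\rho_i(t)-\rho_i(s)][\psi(s)-\psi(0)]}{(t-s)^{1+\nu_i}}ds ,
\]
and the identity $\psi-\psi(0)=I^{\nu_i}\mathbf{D}_t^{\nu_i}\psi$ only takes care of the integral. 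The term $\psi(0)\mathbf{D}_t^{\nu_i}\rho_i$ is invisible to $\mathbf{D}_t^{\nu_i}\psi$. For $\psi\equiv1$ and $\rho_i(t)=1+t$ your right-hand side vanishes, while the left-hand side equals $\langle t^{1-\nu_i}\rangle^{(\alpha_0)}_{t,[0,t^*]}/\Gamma(2-\nu_i)>0$. So the intermediate quantity must be the full norm $\|\psi\|_{\C^{\alpha_0}_{\nu_i}([0,t^*])}$ of Definition~\ref{d2.1}, which includes $\|\psi\|_{\C([0,t^*])}$. This costs nothing in the final bounds: the left-hand side of \eqref{2.1} also controls $\|u\|_{\C(\bar\Omega_T)}$, so that norm is still at most $C_0|\Omega|\mathcal{R}$.

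The same term also limits the H\"older exponent. Since $\rho_1'\in\C^{\nu-1}$,
\[
\mathbf{D}_t^{\nu_1}\rho_1(t)=\frac{\rho_1'(0)\,t^{1-\nu_1}}{\Gamma(2-\nu_1)}+O(t^{\nu-\nu_1}),
\]
while $\rho_1\mathbf{D}_t^{\nu_1}\psi$ and the commutator integral have increments $O(t^{\alpha\nu_1/2})$ and $O(t)$ at $t=0$. Suppose $\psi(0)\rho_1'(0)\neq0$, which the hypotheses allow (e.g.\ I type FDO, $\rho_1(t)=1+t$, $\int_\Omega u_0\,dx\neq0$). Then $\langle\mathbf{D}_t^{\nu_1}(\rho_1\psi)\rangle^{(\alpha_0)}_{t,[0,t^*]}=\infty$ for every $\alpha_0\in(1-\nu_1,\alpha\nu_1/2]$, and this interval is nonempty as soon as $\nu_1>2/(2+\alpha)$. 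Hence no product lemma can give the second and third estimates for $i=1$ on the whole range $\alpha_0\in(0,\alpha\nu_1/2]$. What your argument actually proves is the range $\alpha_0\leq\min\{\alpha\nu_1/2,1-\nu_i\}$, or the full range under the extra condition $\psi(0)\rho_i'(0)=0$. For $i\geq2$ nothing is lost, because h2 gives $1-\nu_i>\alpha\nu_1/2$. The type~II part of your fourth estimate goes through the third, so it inherits the same restriction. This is precisely the obstacle you flagged at the end, and deferring it to \cite{SV}, as the paper also does, does not remove it.
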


At this point, setting an arbitrary time-point $t_a\in(0,t^*]$, we
define the  approximate values $\nu_{1,a}$, $\nu_{i^*,a}$ and
$\gamma_a$ for $\nu_{1}$, $\nu_{i^*}$ and $\gamma$ calculated via
\eqref{i.13}-\eqref{i.15}, respectively, as
\begin{equation}\label{2.3}
\nu_{1,a}=
\begin{cases}
\frac{\ln|\psi(t_a)-\int_{\Omega}u_{0}(x)dx|}{\ln
t_a}\qquad\qquad\qquad\quad\text{for the I type FDO},
\\
\frac{\ln|\rho_{1}(t_a)\psi(t_a)-\rho_{1}(0)\int_{\Omega}u_{0}(x)dx|}{\ln
t_a}\qquad\quad\text{for the II type FDO},
\end{cases}
\end{equation}
and
\begin{equation}\label{2.4}
\nu_{i^*,a}=\nu_{1,a}-\log_{\lambda}\bigg|\frac{\mathcal{F}_{\nu,a}(\lambda
t_a)}{\mathcal{F}_{\nu,a}(t_a)}\bigg|,
\end{equation}
\begin{equation}\label{2.5}
\gamma_a=1-\log_{\mu}\bigg|\frac{\mathcal{F}_{\gamma,a}(\mu
t_a)}{\mathcal{F}_{\gamma,a}(t_a)}\bigg|,
\end{equation}
where we put
\begin{equation}\label{2.4*}
\mathcal{F}_{\nu,a}(t_a)=\begin{cases}
\frac{1}{\rho_{i^*}(t_a)}[\c_{\nu}(t_a)-\sum\limits_{i=2,i\neq
i^*}^{M}\rho_{i}(t_a)\D_{t}^{\nu_{i}}\psi(t_a)-\rho_{1}(t_a)\D_{t}^{\nu_{1,a}}\psi(t_a)]\quad
\text{for the I type FDO},\\
\c_{\nu}(t_a)-\sum\limits_{i=2,i\neq
i^*}^{M}\D_{t}^{\nu_{i}}(\rho_{i}(t_a)\psi(t_a))-\D_{t}^{\nu_{1,a}}(\rho_{1}(t_a)\psi(t_a))\quad
\text{for the II type FDO},
\end{cases}
\end{equation}
\begin{equation}\label{2.5*}
\mathcal{F}_{\gamma,a}(t_a)=\begin{cases}
\int_{\Omega}g(x,t_a)dx+a_{0}(t_a)\psi(t_a)-\mathcal{I}(t_a)\\
-\sum\limits_{i=2,i\neq
i^*}^{M}\rho_{i}(t_a)\D_{t}^{\nu_{i}}\psi(t_a)-\rho_{1}(t_a)\D_{t}^{\nu_{1,a}}\psi(t_a)\qquad
\text{for the I type FDO},\\
\\
\int_{\Omega}g(x,t_a)dx+a_{0}(t_a)\psi(t_a)-\mathcal{I}(t_a)\\
-\sum\limits_{i=2,i\neq
i^*}^{M}\D_{t}^{\nu_{i}}(\rho_{i}(t_a)\psi(t_a))-\D_{t}^{\nu_{1,a}}(\rho_{1}(t_a)\psi(t_a))\quad
\text{for the II type FDO},
\end{cases}
\end{equation}
Clearly, the approximate formulas \eqref{2.3}-\eqref{2.5} tell us
that values $\nu_{1,a},$ $\nu_{i^*,a}$ and $\gamma_{a}$ depend on
$t_a$. We recall that, in this paper, we aim to find time intervals
for which these quantities are computed with the given accuracy
$\varepsilon_{I},$ $\varepsilon_{II}$ and
$\varepsilon_{III}\in(0,1)$, respectively. Other words, we find or,
at least, evaluate the terminal time $T_{I},$ $T_{II}$ and $T_{III}$
such that the following error estimates hold
\begin{align}
\label{2.6.1}
\Delta_1(t_a)&:=|\nu_1-\nu_{1,a}|<\varepsilon_{I}\qquad \text{for
any}\quad t_a\in(0,T_{I}],\\\label{2.6.2}
\Delta_2(t_a)&:=|\nu_{i^*}-\nu_{i^*,a}|<\varepsilon_{II}\quad
\text{for any}\quad t_a\in(0,T_{II}],\\\label{2.6.3}
\Delta_3(t_a)&:=|\gamma-\gamma_{a}|<\varepsilon_{III}\qquad
\text{for any}\quad t_a\in(0,T_{III}].
\end{align}
It is apparent that the values $T_{I},$ $T_{II}$ and $T_{III}$ may
depend on $\varepsilon_{I},$ $\varepsilon_{II}$ and
$\varepsilon_{III}$, correspondingly.

 Our first result concerns to
the bound \eqref{2.6.1} and, accordingly, $T_{I}$ in the case of
both inverse problems.

Setting
\begin{equation}\label{2.10*}
\bullet\quad T_{I}^0=\begin{cases}
\min\Big\{t^*,\Big(\frac{\rho_1(0)}{\Gamma(1+x^*)|\c_{\nu,0}|}\Big)^{-\frac{2}{\varepsilon_{I}}},
\Big(\frac{|\c_{\nu,0}|}{\Gamma(1+x^*)\rho_1(0)}\Big)^{-\frac{2}{\varepsilon_{I}}},(1-\varepsilon_{I})^{\frac{2}{\varepsilon_{I}}}
\Big\}\quad \text{for the I type FDO},\\
\\
\min\Big\{t^*,(\Gamma(1+x^*)|\c_{\nu,0}|)^{\frac{2}{\varepsilon_{I}}},
\Big(\frac{|\c_{\nu,0}|}{\Gamma(1+x^*)}\Big)^{-\frac{2}{\varepsilon_{I}}},(1-\varepsilon_{I})^{\frac{2}{\varepsilon_{I}}}
\Big\}\qquad \text{for the II type FDO},
\end{cases}
\end{equation}

\noindent$\qquad\bullet$ $T_{K}$ is the maximum positive time in
$[0,t^*]$ such that
\begin{equation}\label{2.10}
sgn\,(\mathcal{K}_{0}(t))= sgn\,(\mathcal{K}_{0}(0)),
\end{equation}
\begin{equation}\label{2.10**}
\bullet\quad C_{4}=\frac{C_0}{\Gamma(1+x^*)}\cdot \begin{cases}
1+2\sum\limits_{i=2}^{M}\frac{|\rho_i(0)|}{\rho_1(0)\Gamma(1+x^*)} \qquad\qquad\qquad\qquad \text{for the I type FDO},\\
[C_1+C_2]\Big[1+\frac{2}{\Gamma(1+x^*)}\Big]\sum\limits_{i=1}^{M}\|\rho_i\|_{\C^{\nu}([0,t^*])}
\quad \text{for the II type FDO},
\end{cases}
\end{equation}
we claim the following.
\begin{theorem}\label{t2.3}
For any $\varepsilon_{I}\in(0,1),$ under assumptions of either
Theorem \ref{t2.1} or Theorem \ref{t2.2}, the bound \eqref{2.6.1}
holds for each $t_a\in(0,T_{I}]$ with some $T_{I}$ satisfying the
inequality
\begin{equation*}\label{2.7}
T_{I}\leq T_{I}^{0}.
\end{equation*}
If in the first inverse problem, the FDO has at least three terms,
i.e. $M\geq 3$, then
\begin{equation}\label{2.8}
T_{I}=\begin{cases}
\min\Big\{T_{I}^{0},\Big(\frac{\varepsilon_I|\c_{\nu,0}|}{C_4\rho_1(0)\mathcal{R}(g,u_0,\varphi)}\Big)^{1/\nu_0}\Big\} \quad \text{for the I type FDO},\\
\min\Big\{T_{I}^{0},\Big(\frac{\varepsilon_I|\c_{\nu,0}|}{C_4\mathcal{R}(g,u_0,\varphi)}\Big)^{1/\nu_0}\Big\}
\quad\quad \text{ for the II type FDO},
\end{cases}
\end{equation}
with
\[
\nu_0=\begin{cases} \frac{\alpha\nu_2}{2},\qquad \text{if}\qquad i^*\neq 2,\\
\frac{\alpha\nu_3}{2},\qquad\text{if}\qquad i^*=2.
\end{cases}
\]
If in the second inverse problem, unknown $\gamma$ is looking for in
the interval $(0,\gamma_0)$ with given $\gamma_0<1$, then
\begin{equation}\label{2.9}
T_{I}=\begin{cases}
\min\Big\{T_{I}^{0},\Big(\frac{\varepsilon_I|\c_{\nu,0}|}{C_4\rho_1(0)\mathcal{R}(g,u_0,\varphi)}\Big)^{2/\alpha\nu_2},T_{K}\Big\} \quad \text{for the I type FDO},\\
\min\Big\{T_{I}^{0},\Big(\frac{\varepsilon_I|\c_{\nu,0}|}{C_4\mathcal{R}(g,u_0,\varphi)}\Big)^{2/\alpha\nu_2},T_K\Big\}
\quad\quad \text{ for the II type FDO},
\end{cases}
\end{equation}
where $\mathcal{R}(g,u_0,\varphi)$ is defined via \eqref{2.0}.
\end{theorem}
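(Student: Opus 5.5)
The argument rests on an exact small-time representation of $\psi(t)-\int_\Omega u_0\,dx$, treated first for the I type FDO. Put $v(t)=\psi(t)-\int_\Omega u_0dx$, so $v(0)=0$. By \eqref{2.2*} we have $\mathbf{D}_t\psi=\c_\nu$, which we rewrite as
\[
\rho_1(t)\mathbf{D}_t^{\nu_1}v=\c_\nu(t)-\sum_{i\ge2}\rho_i(t)\mathbf{D}_t^{\nu_i}\psi=:h(t).
\]
Inverting the Caputo derivative (recall $v(0)=0$) gives
\[
v(t)=I^{\nu_1}\bigl(h/\rho_1\bigr)(t)=\frac{\c_{\nu,0}}{\rho_1(0)\Gamma(1+\nu_1)}t^{\nu_1}+t^{\nu_1}r(t),
\]
with $r(t)=\Gamma(1+\nu_1)^{-1}\sup_{s\le t}|h(s)/\rho_1(s)-h(0)/\rho_1(0)|$ up to sign. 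The key observation is that $\mathbf{D}_t^{\nu_i}\psi(0)=0$ for $i\ge 2$, because $\psi$ is $\C^{\alpha\nu_1/2}_{\nu_1}$-regular, so $v(t)=O(t^{\nu_1})$ and hence $I^{1-\nu_i}v'$ vanishes at $0$. Consequently $h(0)=\c_{\nu,0}$. In the II type case the same reasoning is applied to $w(t)=\rho_1(t)\psi(t)-\rho_1(0)\int_\Omega u_0dx$, with $\rho_1$ absent from the denominator. This explains the two variants of $T_I^0$ and $C_4$.

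Taking logarithms,
\[
\nu_{1,a}-\nu_1=\frac{\ln\bigl|\c_{\nu,0}/(\rho_1(0)\Gamma(1+\nu_1))+r(t_a)\bigr|}{\ln t_a}.
\]
The plan is to force the numerator into $[-\frac{\varepsilon_I}{2}|\ln t_a|,\frac{\varepsilon_I}{2}|\ln t_a|]$. We use $\Gamma(1+x^*)\le\Gamma(1+\nu_1)\le1$ and suppose $|r(t_a)|\le\frac{\varepsilon_I}{2}\cdot\frac{|\c_{\nu,0}|}{\rho_1(0)}$, say with the factor absorbed into $C_4$. Then the modulus is bracketed between $(1-\varepsilon_I)|\c_{\nu,0}|/\rho_1(0)$ and $|\c_{\nu,0}|/(\rho_1(0)\Gamma(1+x^*))$ times harmless constants. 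The three entries of $T_I^0$ beyond $t^*$ are exactly the conditions $t_a^{-\varepsilon_I/2}\ge$ each of these bounds (and their reciprocals): $\ln\bigl(|\c_{\nu,0}|/(\Gamma(1+x^*)\rho_1(0))\bigr)\le\frac{\varepsilon_I}{2}|\ln t_a|$, the reverse inequality with $\rho_1(0)/(\Gamma(1+x^*)|\c_{\nu,0}|)$, and $|\ln(1-\varepsilon_I)|\le\frac{\varepsilon_I}{2}|\ln t_a|$. This yields $\Delta_1(t_a)<\varepsilon_I$ and the general statement $T_I\le T_I^0$. For that general statement, only $r(t)\to0$ is needed, which follows from continuity of $h$.

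The quantitative part is an estimate on $r$. We write
\[
\bigl|h(t)/\rho_1(t)-h(0)/\rho_1(0)\bigr|\le \frac{|\c_\nu(t)-\c_{\nu,0}|}{\rho_1(0)}+\frac{\sum_{i\ge2}|\rho_i(t)|\,|\mathbf{D}^{\nu_i}_t\psi(t)|}{\rho_1(0)}+\text{(terms from }\rho_1(t)-\rho_1(0)),
\]
and bound each term by Hölder seminorms from Corollary \ref{c2.1}, using $|\mathbf{D}^{\nu_i}_t\psi(t)|\le\langle\mathbf{D}^{\nu_i}_t\psi\rangle^{(\alpha_0)}t^{\alpha_0}$ since the value at $0$ vanishes. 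This produces $r(t)\le C_4\mathcal{R}\,t^{\nu_0}/\Gamma(1+x^*)$, and the factor $2|\rho_i(0)|/(\rho_1(0)\Gamma(1+x^*))$ in $C_4$ comes from this term. The main obstacle is obtaining the exponent $\nu_0$. For the FIP, $\c_\nu$ contains no unknown order, but the term $\rho_{i^*}\mathbf{D}^{\nu_{i^*}}\psi$ carries the unknown $\nu_{i^*}$, and $\alpha_0$ can only be chosen uniformly as $\alpha$ times the smallest admissible known order. When $M\ge3$, one therefore uses the known $\nu_2$ (or $\nu_3$ if $i^*=2$) to control the Hölder exponent of $\psi$ from below via $\nu_1>\nu_2$, giving $\nu_0=\alpha\nu_2/2$ or $\alpha\nu_3/2$. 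For the SIP, the convolution terms $\mathcal{K}*b_0\psi$ and $\mathcal{K}*\mathcal{I}$ in $\c_\nu$ involve the unknown $\gamma$. Restricting to $t\le T_K$ fixes the sign of $\mathcal{K}_0$, so that $|\mathcal{K}*f|(t)\lesssim t^{1-\gamma_0}\|f\|$ uniformly for $\gamma<\gamma_0$. This bound is then dominated by $t^{\alpha\nu_2/2}$ for small $t$, which gives \eqref{2.9}. Finally, we require $C_4\mathcal{R}t_a^{\nu_0}\le\varepsilon_I|\c_{\nu,0}|/\rho_1(0)$ and solve for $t_a$; together with $T_I^0$ this gives \eqref{2.8}–\eqref{2.9}.
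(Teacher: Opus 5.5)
The genuine gap is in your derivation of \eqref{2.9}. You split $\c_\nu$ and bound its memory part $-\mathcal{K}*\c_1$ (recall $-\delta\mathcal{K}*\mathcal{I}+\mathcal{K}*b_0\psi=-\mathcal{K}*\c_1$) by $Ct^{1-\gamma_0}$, uniformly in $\gamma<\gamma_0$. You then claim this is dominated by $t^{\alpha\nu_2/2}$ for small $t$. On $(0,1)$, however, $t^{1-\gamma_0}\le t^{\alpha\nu_2/2}$ holds only when $1-\gamma_0\ge\alpha\nu_2/2$, and this is not assumed. For $\gamma_0$ close to $1$ the quotient $t^{1-\gamma_0-\alpha\nu_2/2}$ blows up as $t\to0$. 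Your estimate therefore gives at best the exponent $\min\{\alpha\nu_2/2,1-\gamma_0\}$, and \eqref{2.9} does not follow.

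Two further points in the same step. The sign of $\mathcal{K}_0$ on $[0,T_K]$ plays no role in that upper bound, since $|\mathcal{K}*f|(t)\le\|\mathcal{K}_0\|_{\C}\|f\|_{\C}t^{1-\gamma}/(1-\gamma)$ holds regardless. Also, the unknown $\gamma$ is not confined to the explicit convolution terms. Every H\"older bound you take from Corollary \ref{c2.1} (for $\mathbf{D}_t^{\nu_i}\psi$ and for $\c_\nu$) carries $C_0$, which depends on $\|\mathcal{K}\|_{L^1(0,T)}$ and hence on $\gamma$; you never make this uniform.

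The paper proceeds the other way round. It takes the exponent $\alpha\nu_2/2$ from the H\"older bound on $\mathbf{D}_t^{\nu_1}\psi$ in Corollary \ref{c2.1}, whose admissible range $(0,\alpha\nu_1/2]$ does not involve $\gamma$. It uses $T_K$ and $\gamma<\gamma_0$ only to bound $\|\mathcal{K}\|_{L^1(0,t_1)}$, with $t_1=\min\{t^*,T_K\}$, from above by $\|\mathcal{K}_0\|_{\C}/(1-\gamma_0)$ and from below by a positive $\gamma$-independent number; see \eqref{4.3}. The nonvanishing of $\mathcal{K}_0$ is used only for that lower bound. This makes $C_0$, and hence $C_4$, independent of the unknown $\gamma$.

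For the $\nu_1$-part itself your route differs from the paper's and is simpler. The paper takes the small-time expansion from Lemma \ref{l3.1}, whose representation keeps the lower-order contributions with factors $t^{\nu_1-\nu_k}$, and then finishes with Corollary \ref{c3.3} and Remark \ref{r3.1}. That representation is why h2 is needed (it gives $\nu_1-\nu_k>\alpha\nu_2/2$). It is also where the term $2|\rho_i(0)|/(\rho_1(0)\Gamma(1+x^*))$ in $C_4$ comes from, with two remainder terms per lower order.

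You observe instead that $\mathbf{D}_t^{\nu_i}\psi(0)=0$ for $i\ge2$, so those contributions vanish and only $\mathbf{D}_t^{\nu_1}$ has to be inverted. This is correct, and it suffices for $T_I\le T_I^0$ and for the exponent $\nu_0$ in the FIP. But, contrary to what you assert, your splitting of $h/\rho_1$ does not produce the constant $C_4$ of \eqref{2.10**}. It brings in $C_3$, $\sup|\rho_i|/\varrho_3$ and a Lipschitz term for $1/\rho_1$, none of which is dominated by $C_4$ in general. So \eqref{2.8} is not obtained with the stated constant.

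The fix is to skip $h$ altogether. Write $v=I^{\nu_1}\mathbf{D}_t^{\nu_1}\psi$ with $\mathbf{D}_t^{\nu_1}\psi(0)=\c_{\nu,0}/\rho_1(0)$ (I type; the II type is analogous). This gives $|r(t)|\le\langle\mathbf{D}_t^{\nu_1}\psi\rangle^{(\nu_0)}_{t,[0,t^*]}t^{\nu_0}/\Gamma(1+x^*)$, which is exactly the leading quantity that $C_4\mathcal{R}$ is built to dominate in the paper. The same direct bound, with $C_0$ made uniform in $\gamma$ as above, also handles the SIP.
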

\begin{remark}\label{r2.1}
It is apparent that the existence of the positive $T_{K}$ (see
\eqref{2.10}) is ensured by the continuity of the kernel
$\mathcal{K}_0$ on $[0,t^*]$ and nonvanishing of $\mathcal{K}_0(0)$
prescribed by the requirements of Theorem \ref{t2.2}.
\end{remark}
\begin{remark}\label{r2.2}
Obviously, $T_I$ defined either via \eqref{2.8} or via \eqref{2.9}
depends on the value $C_4$ which in turn does on the quantities
$C_0,$ $C_1$ and $C_2$ (see \eqref{2.10**}). Generally speaking, the
(explicitly) computation of these values is a complex task.
Moreover, we note that $C_0$ (as stated in Theorem \ref{t2.1})
depends on $\|\mathcal{K}\|_{L^{1}(0,T)}$ that brings additionally
difficulties to computation in the case of the second inverse
problem. However, as it will be demonstrated in the proof of Theorem
\ref{t2.3} (see \eqref{4.3}), the last drawback will be removed
thanks to introducing time $T_K$ and the restriction on $\gamma$.
\end{remark}

In further analysis, we focus only on the case $\nu_{1,a}<1$ and
Theorem \ref{t2.3} tells us that this situation may be achieved by a
restriction on the error $\varepsilon_{I}$ and, accordingly, on the
time $T_{I}$.

Namely, thanks to  $\nu_{i}\in(0,1),$ $i=1,...,M,$ we may assume the
existence of two positive values $\underline{\nu}$ and $\bar{\nu}$
such that
\begin{equation}\label{2.18}
0<\underline{\nu}<\nu_{M}<...<\nu_2<\nu_1<\bar{\nu}<1.
\end{equation}
Then, performing straightforward calculations, we claim
\begin{corollary}\label{c.f1}
If $\varepsilon_{I}\in(0,1-\bar{\nu}),$ then $\nu_{1,a}$ computed by
\eqref{2.3} satisfies to one of the following inequalities:
\[
\text{either}\quad \nu_{1,a}<\nu_1\quad\text{or}\quad
\nu_{1,a}<\bar{\nu}+\varepsilon_{I}
\]
for each $t_a\in(0,T_{I}]$ with $T_{I}$ is given in Theorem
\ref{t2.3}.
\end{corollary}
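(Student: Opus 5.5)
The argument uses nothing beyond Theorem \ref{t2.3} and the ordering \eqref{2.18}; it is a two-case split according to the sign of $\nu_1-\nu_{1,a}$. Fix $\varepsilon_I\in(0,1-\bar{\nu})$ and any $t_a\in(0,T_I]$, where $T_I$ is the time supplied by Theorem \ref{t2.3} for this $\varepsilon_I$. The admissible range $\varepsilon_I\in(0,1)$ in Theorem \ref{t2.3} contains $(0,1-\bar\nu)$ because $\bar\nu\in(0,1)$. Hence the bound \eqref{2.6.1} is available:
\[
|\nu_1-\nu_{1,a}|<\varepsilon_I .
\]

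\emph{Case $\nu_{1,a}<\nu_1$.} This is the first alternative of the corollary, and nothing more needs to be shown. I also record that here $\nu_{1,a}<\nu_1<\bar\nu<1$ by \eqref{2.18}.

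\emph{Case $\nu_{1,a}\geq\nu_1$.} Now \eqref{2.6.1} reads $0\le \nu_{1,a}-\nu_1<\varepsilon_I$. Combining this with $\nu_1<\bar{\nu}$ from \eqref{2.18} gives
\[
\nu_{1,a}<\nu_1+\varepsilon_I<\bar{\nu}+\varepsilon_I .
\]
This is the second alternative. Since $\varepsilon_I<1-\bar{\nu}$, we also get $\bar{\nu}+\varepsilon_I<1$, so $\nu_{1,a}<1$.

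In both cases $\nu_{1,a}<1$, which is the property the subsequent analysis needs. For instance, it guarantees that $\D_t^{\nu_{1,a}}$ in \eqref{2.4*} and \eqref{2.5*} is a Caputo derivative of order in $(0,1)$.

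There is no genuine obstacle in this argument. The only point requiring care is checking that the restriction $\varepsilon_I<1-\bar\nu$ is compatible with the hypotheses of Theorem \ref{t2.3}, so that the same $T_I$ can be used; this was verified at the start.
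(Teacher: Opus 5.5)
Your proof is correct and matches the paper, which justifies this corollary only by ``straightforward calculations'': you combine $|\nu_1-\nu_{1,a}|<\varepsilon_I$ from Theorem \ref{t2.3} with $\nu_1<\bar{\nu}$ from \eqref{2.18}. In fact $\nu_{1,a}<\nu_1+\varepsilon_I<\bar{\nu}+\varepsilon_I<1$ holds in both of your cases, so the case split is not even needed.
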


Our next results concerns with the estimate \eqref{2.6.2} and some
properties of the functions in formulas \eqref{2.4} and
\eqref{i.14}.

Denote by
\begin{equation}\label{2.13}
\c_2:=\c_2(t)=\begin{cases}
\rho_1[\mathbf{D}_{t}^{\nu_1}-\mathbf{D}_{t}^{\nu_{1,a}}]\psi\qquad\,\text{
for
the I type FDO,}\\
[\mathbf{D}_{t}^{\nu_1}-\mathbf{D}_{t}^{\nu_{1,a}}](\rho_1\psi)\qquad\text{for
the II type FDO},
\end{cases}
\end{equation}
and for each $n\in\mathbb{N}$
\begin{equation}\label{2.14}
\mathcal{U}(t,n)=
\begin{cases}
\frac{\mathbf{D}_{t}^{\nu_1}\psi(t)}{n}+\frac{1}{\rho_{i^*}(t)}[\c_{\nu}(t)-\sum\limits_{i\neq
i^*}\rho_{i}(t)\mathbf{D}_{t}^{\nu_i}\psi(t)] \quad\text{ for
the I type FDO,}\\
\frac{\mathbf{D}_{t}^{\nu_1}(\rho_{i^*}(t)\psi(t))}{n}+\c_{\nu}(t)-\sum\limits_{i\neq
i^*}\mathbf{D}_{t}^{\nu_i}(\rho_{i}(t)\psi(t)) \quad\text{for
the II type FDO.}\\
\end{cases}
\end{equation}
It is worth noting that the arguments of \cite[Section 3]{HPSV}
establish the existence $n^*\in\mathbb{N}$ such that
\begin{equation}\label{2.16}
\mathcal{U}(0,n^*)\neq 0.
\end{equation}
Further, making additional requirement on the observation:
\begin{description}
 \item[h7] There exists a positive time $t_1^*\in(0,t^*]$ such that
 $\nu_{1,a}<1$ and
 $\mathbf{D}_{t}^{\nu_{1,a}}\psi$ belongs to
 $\C^{\alpha_1}([0,t_1^*])$ for some $\alpha_1\in(0,1),$ and
 \begin{equation}\label{2.15}
\c_{2}(0)\neq 0,
 \end{equation}
    \end{description}
    we claim the following.
\begin{lemma}\label{l2.1}
Let assumptions of Theorem \ref{t2.1} and h7 hold. Then there are
the following:

\noindent (i)
$\|\mathbf{D}_{t}^{\nu_{1,a}}(\psi\rho_1)\|_{\C^{\alpha_1}([0,t^*_{1}])}\leq
C_5\|\rho_1\|_{\C^{\nu}([0,t^*])}
\|\mathbf{D}_{t}^{\nu_{1,a}}\psi\|_{\C^{\alpha_1}([0,t^*_{1}])},$

\noindent with the positive value $C_5$ depending only on $t^*$ and
$\nu$;

\noindent (ii) for any
$\alpha_2\in(0,\min\{\alpha_1,\alpha\nu_1/2\}]$, there is the
estimate
\[
\|\c_2\|_{\C^{\alpha_2}([0,t_1^*])}\leq
C_6\mathcal{R}_1(\rho_1,\mathcal{R},\mathbf{D}_{t}^{\nu_{1,a}}\psi)
\]
with  $C_6=\max\{C_0|\Omega|,C_2C_0|\Omega|,C_5\}$ and
$\mathcal{R}_1=\|\rho_1\|_{\C^{\nu}([0,t^*])}[\mathcal{R}+\|\mathbf{D}_{t}^{\nu_{1,a}}\psi\|_{\C^{\alpha_1}([0,t^*_{1}])}]$;

\noindent (iii) for any $\alpha_0\in(0,\alpha\nu_1/2],$
$\alpha_2\in(0,\min\{\alpha_1,\alpha\nu_1/2\}]$ and each
$n\in\mathbb{N}$, there hold
\begin{align*}
\|\mathcal{F}_{\nu}\|_{\C^{\alpha_0}([0,t^*])}&\leq
C_7\mathcal{R}(g,u_0,\varphi),\qquad
\|\mathcal{F}_{\nu,a}\|_{\C^{\alpha_2}([0,t_{1}^*])}\leq
C_{8}[\mathcal{R}+\mathcal{R}_1],\\
\|\mathcal{U}\|_{\C^{\alpha_0}([0,t^*])}&\leq
[C_{7}+C_0|\Omega|\max\{1,C_2\|\rho^{-1}_{i^*}\|_{\C^{\nu}([0,t^*])}\}]\mathcal{R}(g,u_0,\varphi),
\end{align*}
where
$C_7=\max\{1,\|\rho^{-1}_{i^*}\|_{\C^{\nu}([0,t^*])}\}[C_0|\Omega|\max\{1,C_2\}\sum\limits_{i\neq
i^*}\|\rho_i\|_{\C^{\nu}([0,t^*])}+C_3]$, $C_8=C_6+C_7$.
\end{lemma}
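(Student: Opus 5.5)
The plan is to reduce each item to estimates already stated in Corollary \ref{c2.1} and Theorem \ref{t2.1}, using only the algebra of Hölder spaces on a bounded interval. The one new ingredient is a product rule for the Caputo derivative, which we treat the same way as in the arguments behind \cite[Lemmas 5.5-5.6]{SV} that produced Corollary \ref{c2.1}.

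For (i), we repeat the argument giving the second and third estimates in Corollary \ref{c2.1}, with $\nu_i$ replaced by $\nu_{1,a}<1$ (from h7) and $\alpha_0$ replaced by $\alpha_1$. First we write
\[
\mathbf{D}_t^{\nu_{1,a}}(\rho_1\psi)=\rho_1\mathbf{D}_t^{\nu_{1,a}}\psi+\mathcal{J}(t),
\]
where the commutator $\mathcal{J}(t)$ is given by an integral of $[\rho_1(t)-\rho_1(s)]$ against a singular kernel applied to $\psi$ (after integration by parts). Because $\rho_1\in\C^{\nu}$ with $\nu>1$, the function $\rho_1$ is Lipschitz with Hölder continuous derivative. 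This makes the commutator kernel integrable, so $\mathcal{J}$ is Hölder of order $\alpha_1$ with a bound by $\|\rho_1\|_{\C^{\nu}}\|\mathbf{D}_t^{\nu_{1,a}}\psi\|_{\C^{\alpha_1}}$. To control $\psi$ itself by its fractional derivative we use $\psi-\psi(0)=I^{\nu_{1,a}}\mathbf{D}_t^{\nu_{1,a}}\psi$. The constant depends on $t^*$ and $\nu$ only. It is uniform in $\nu_{1,a}$ provided $\nu_{1,a}$ stays in a compact subset of $(0,1)$, and Corollary \ref{c.f1} guarantees this. \textbf{This is the main obstacle:} the uniformity of $C_5$ in the order $\nu_{1,a}$, because the Gamma factors $1/\Gamma(1-\nu_{1,a})$ must remain bounded. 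We would use $\Gamma(1+x)\ge\Gamma(1+x^*)$ and $\nu_{1,a}<\bar\nu+\varepsilon_I<1$ to settle this.

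For (ii), we use the definition \eqref{2.13}. For the I type we bound $\rho_1\mathbf{D}_t^{\nu_1}\psi$ by Corollary \ref{c2.1} and $\rho_1\mathbf{D}_t^{\nu_{1,a}}\psi$ by h7, using $\|fg\|_{\C^{\alpha_2}}\le\|f\|_{\C^{\alpha_2}}\|g\|_{\C^{\alpha_2}}$ and the embedding $\C^{\beta}\subset\C^{\alpha_2}$ for $\beta\ge\alpha_2$ (with constants absorbed, since $t^*<1$). For the II type we use the third line of Corollary \ref{c2.1} with $i=1$ together with (i). Collecting terms gives $C_6\mathcal{R}_1$ with $C_6=\max\{C_0|\Omega|,C_2C_0|\Omega|,C_5\}$.

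For (iii), we start from the explicit formulas \eqref{i.12}, \eqref{2.4*} and \eqref{2.14}. Each $\mathcal{F}_\nu$ is $\rho_{i^*}^{-1}$ times (or, for the II type, equal to) $\c_\nu=\mathbf{D}_t\psi$ minus a sum of terms $\rho_i\mathbf{D}_t^{\nu_i}\psi$ or $\mathbf{D}_t^{\nu_i}(\rho_i\psi)$. We bound $\c_\nu$ by the last line of Corollary \ref{c2.1} ($C_3\mathcal{R}$), the sum termwise by the first and third lines, and the factor $\rho_{i^*}^{-1}$ by its $\C^\nu$ norm. This yields $C_7\mathcal{R}$. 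For $\mathcal{F}_{\nu,a}$ we note the identity $\mathcal{F}_{\nu,a}=\mathcal{F}_\nu+\rho_{i^*}^{-1}\c_2$ (resp. $\mathcal{F}_\nu+\c_2$). Combining the bound on $\mathcal{F}_\nu$ with (ii) on $[0,t_1^*]$ gives $C_8[\mathcal{R}+\mathcal{R}_1]$. Finally, $\mathcal{U}(\cdot,n)$ equals $\mathcal{F}_\nu$ plus $n^{-1}$ times $\mathbf{D}_t^{\nu_1}\psi$ (resp. $\mathbf{D}_t^{\nu_1}(\rho_{i^*}\psi)$). Since $n^{-1}\le1$, Corollary \ref{c2.1} bounds this extra term by $C_0|\Omega|\max\{1,C_2\|\rho_{i^*}^{-1}\|_{\C^\nu}\}\mathcal{R}$, uniformly in $n$.
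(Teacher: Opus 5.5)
\textbf{Where the argument fails.} Your route is the paper's. The paper justifies the lemma only by remarking that everything follows from Theorem \ref{t2.1} and Corollary \ref{c2.1} by straightforward calculation, and your treatment of (ii)--(iii) is that calculation. The gap is in (i). The product rule for the regularized Caputo derivative (the formula from \cite[Proposition 5.5]{SV} quoted in the proof of Lemma \ref{l7.1}) reads
\[
\mathbf{D}_t^{\nu_{1,a}}(\rho_1\psi)=\rho_1\mathbf{D}_t^{\nu_{1,a}}\psi+\psi(0)\,\mathbf{D}_t^{\nu_{1,a}}\rho_1+\frac{\nu_{1,a}}{\Gamma(1-\nu_{1,a})}\int_0^t\frac{[\rho_1(t)-\rho_1(s)][\psi(s)-\psi(0)]}{(t-s)^{1+\nu_{1,a}}}\,ds .
\]
So your commutator $\mathcal{J}$ contains the term $\psi(0)\,\mathbf{D}_t^{\nu_{1,a}}\rho_1$. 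The identity $\psi-\psi(0)=I^{\nu_{1,a}}\mathbf{D}_t^{\nu_{1,a}}\psi$ controls the integral term, but it does not control $\psi(0)$, because the Caputo derivative annihilates constants.

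In fact no $C_5$ depending only on $t^*$ and $\nu$ can exist. Take $\rho_1=1+t$ and $\psi=1+\epsilon t^{\nu_1}$ with $\nu_{1,a}<\nu_1$. The right-hand side of (i) is $O(\epsilon)$, while the left-hand side is at least $(t_1^*)^{1-\nu_{1,a}}/\Gamma(2-\nu_{1,a})-O(\epsilon)$. The paper's one-line justification passes over the same term.

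The repair is to put $|\psi(0)|$ (or $\|\psi\|_{\C}$, as in the $\C^{\alpha}_{\nu}$-norm of Definition \ref{d2.1}) on the right-hand side of (i). Since $|\psi(0)|=|\int_\Omega u_0\,dx|\le|\Omega|\mathcal{R}$ and $\mathcal{R}_1$ already contains $\|\rho_1\|_{\C^\nu}\mathcal{R}$, (ii) and (iii) then go through with a larger $C_6$.

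You must also check that $\psi(0)\mathbf{D}_t^{\nu_{1,a}}\rho_1\in\C^{\alpha_1}$, since $\mathbf{D}_t^{\nu_{1,a}}\rho_1$ is only of class $\C^{1-\nu_{1,a}}$ at $t=0$ when $\rho_1'(0)\neq0$. This holds for the following reasons:
\begin{itemize}
\item h7 forces $\nu_{1,a}<\nu_1$. If $\nu_{1,a}>\nu_1$, then $\mathbf{D}_t^{\nu_{1,a}}\psi$ is unbounded near $0$; if $\nu_{1,a}=\nu_1$, then $\c_2\equiv0$.
\item Then $\mathbf{D}_t^{\nu_1}\psi(0)\neq0$ gives $\alpha_1\le\nu_1-\nu_{1,a}<1-\nu_{1,a}$.
\end{itemize}

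\textbf{Two smaller points.} First, the obstacle you single out is not real. The order-dependent factors that actually arise are $1/\Gamma(1-\nu_{1,a})\le1$ and $1/\Gamma(1+\nu_{1,a})\le1/\Gamma(1+x^*)$. Integrating the commutator kernel against a Lipschitz $\rho_1$ gives $\nu_{1,a}/\Gamma(2-\nu_{1,a})\le1/\Gamma(1+x^*)$. All of these are bounded on the whole of $(0,1)$. In any case, Corollary \ref{c.f1} bounds $\nu_{1,a}$ only from above, so it cannot supply the compactness you invoke.

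Second, in (iii) the constants do not come out as you assert:
\begin{itemize}
\item For the I type, your (correct) identity $\mathcal{F}_{\nu,a}=\mathcal{F}_\nu+\rho_{i^*}^{-1}\c_2$ yields $C_7\mathcal{R}+\|\rho_{i^*}^{-1}\|_{\C^\nu}C_6\mathcal{R}_1$, not $(C_6+C_7)[\mathcal{R}+\mathcal{R}_1]$. The paper, in the proof of Theorem \ref{t2.4}, writes $\mathcal{F}_{\nu,a}=\mathcal{F}_\nu+\c_2$ and silently drops the factor $\rho_{i^*}^{-1}$.
\item For the II type, the extra term $n^{-1}\mathbf{D}_t^{\nu_1}(\rho_{i^*}\psi)$ is bounded through $C_2\|\rho_{i^*}\|_{\C^\nu}$, not $C_2\|\rho_{i^*}^{-1}\|_{\C^\nu}$.
\end{itemize}
Both issues affect only constants, but those constants should be derived from your own identities, not copied from the statement.
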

\noindent Indeed, the estimates of this lemma are easily verified by
straightforward calculations using Theorem \ref{t2.1} (see
\eqref{2.1}-\eqref{2.2*}) and Corollary \ref{c2.1}.

At this point, introducing the value
\[ \varepsilon_\nu=\begin{cases}
\nu_{1,a}-\nu_{i^*+1},\quad
\text{if}\quad i^*\neq M,\\
\nu_{1,a}-\underline{\nu},\qquad\quad \text{if}\quad i^*=M,
\end{cases}
\]
 we state our main result concerning with the estimate \eqref{2.6.2}.
\begin{theorem}\label{t2.4}
Let assumptions of Theorem \ref{t2.1} and h7 hold. Then for any
$\lambda\in(0,1),$ and $\varepsilon_{II}\in(\varepsilon_{\nu},1),$
and each
$\varepsilon\in(0,1-\lambda^{\frac{\varepsilon_{II}-\varepsilon_{\nu}}{3}})$,
the inequality \eqref{2.6.2} is fulfilled for each
$t_a\in(0,T_{II}]$ with some positive $T_{II}$ satisfying the
inequality
\[
T_{II}\leq \min\{t_{1}^*,T_{I}^{0}\},
\]
where $T_{I}^0$  is defined via \eqref{2.10*} with
\begin{equation}\label{e.e}
\varepsilon_{I}\in\begin{cases} (0,
\min\{\nu_{i^*+1},1-\bar{\nu}\}),\quad\text{if}\quad i^*\neq M,\\
(0, \min\{\underline{\nu},1-\bar{\nu}\}),\qquad\quad\text{if}\quad
i^*= M.
\end{cases}
\end{equation}

\noindent In the case of $M\geq 3$, the value $T_{II}$ is defined as
\[
T_{II}=\min\Big\{t_{1}^*,T_{I},
(2n^*)^{-1/\nu_0},\Big[\frac{C_9\varepsilon}{1+n^*C_9\varepsilon}\Big]^{1/\nu_0},
\Big[\frac{\varepsilon|\c_2(0)|}{3C_8(\mathcal{R}+\mathcal{R}_1)}\Big]^{1/\alpha_3}\Big\}
\]
with $\alpha_3=\min\{\alpha_1,\nu_0\}$ and
$$
C_9=\frac{\Gamma(1+x^*)|\mathcal{U}(0,n^*)|}{3\Gamma(\nu_0)([C_{7}+C_0|\Omega|\max\{1,C_2\|\rho^{-1}_{i^*}\|_{\C^{\nu}([0,t^*])}\}]\mathcal{R}
+n^*|\mathcal{U}(0,n^*)|)}$$ and $T_{I}$ given by \eqref{2.8} with
$\varepsilon_{I}$ satisfying \eqref{e.e}.

 Besides, if the order $\nu_1$ is given (i.e.
$\nu_1=\nu_{1,a}$), then, for any $\varepsilon_{II},\lambda\in(0,1)$
and each $\varepsilon\in(0,1-\lambda^{\varepsilon_{II}})$,
\eqref{2.6.2} holds for every $t_a\in(0,T_{II}]$ with
\begin{equation*}\label{2.19}
T_{II}=\min\Big\{t^*,
(2n^*)^{-1/\nu_0},\Big[\frac{C_9\varepsilon}{1+n^*C_9\varepsilon}\Big]^{2/\alpha\nu_1}\Big\}.
\end{equation*}
\end{theorem}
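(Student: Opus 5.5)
Our plan is to split $\Delta_2(t_a)$ into two pieces, one from replacing $\nu_1$ by $\nu_{1,a}$ and one from the pre-limit approximation. By \eqref{i.14} and \eqref{2.4},
\[
\nu_{i^*}-\nu_{i^*,a}=(\nu_1-\nu_{1,a})-\log_\lambda\Big|\lim_{t\to0}\frac{\mathcal{F}_\nu(\lambda t)}{\mathcal{F}_\nu(t)}\Big|+\log_\lambda\Big|\frac{\mathcal{F}_{\nu,a}(\lambda t_a)}{\mathcal{F}_{\nu,a}(t_a)}\Big|.
\]
The first term is at most $\varepsilon_I$ for $t_a\le T_I$ by Theorem~\ref{t2.3}. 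Here $\varepsilon_I$ is restricted by \eqref{e.e}, so Corollary~\ref{c.f1} gives $\nu_{1,a}<1$ and also $\nu_{1,a}>\nu_{i^*+1}$ (respectively $>\underline\nu$); this is what makes $\varepsilon_\nu>0$ meaningful.

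The core step is the small-time asymptotics of $\mathcal{F}_\nu$ and $\mathcal{F}_{\nu,a}$. Following \cite[Section 3]{HPSV}, we use $\mathbf{D}_t\psi=\c_\nu$ from \eqref{2.2*}. This shows that $\mathcal{F}_\nu(t)=\mathbf{D}^{\nu_{i^*}}_t\psi$ (I type), and representing $\psi-\psi(0)$ via fractional integrals yields $\mathcal{F}_\nu(t)\sim \mathcal{U}(0,n^*)\,t^{\nu_1-\nu_{i^*}}\cdot(\text{Gamma factor})$. We make this quantitative with the auxiliary device $\mathcal{U}(t,n^*)$ from \eqref{2.14}–\eqref{2.16}. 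Using the Hölder bound on $\mathcal{U}$ from Lemma~\ref{l2.1}(iii), we get $|\mathcal{U}(t,n^*)-\mathcal{U}(0,n^*)|\le C t^{\nu_0}$, where $\nu_0$ is the slowest Hölder exponent among the remaining terms. We bound Gamma ratios from below by $\Gamma(1+x^*)$. Together these give a relative error bound
\[
\Big|\frac{\mathcal{F}_\nu(\lambda t)}{\lambda^{\nu_1-\nu_{i^*}}\mathcal{F}_\nu(t)}-1\Big|\le \varepsilon \quad\text{for } t\le\min\{(2n^*)^{-1/\nu_0},[C_9\varepsilon/(1+n^*C_9\varepsilon)]^{1/\nu_0}\}.
\]
The constraint $t\le(2n^*)^{-1/\nu_0}$ keeps the $n^*$-correction dominated, and the second threshold comes from solving $t^{\nu_0}(\cdots)\le C_9\varepsilon(1-n^*t^{\nu_0}\ldots)$.

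For $\mathcal{F}_{\nu,a}$ we write $\mathcal{F}_{\nu,a}=\mathcal{F}_\nu+\c_2/\rho_{i^*}$. By h7, $\c_2(0)\neq0$, so near zero $\c_2$ is the leading (order one) contribution, while $\mathcal{F}_\nu\to0$ like $t^{\nu_1-\nu_{i^*}}$ when $\nu_1\neq\nu_{1,a}$. Lemma~\ref{l2.1}(ii),(iii) give $|\mathcal{F}_{\nu,a}(t)-\mathcal{F}_{\nu,a}(0)|\le C_8(\mathcal{R}+\mathcal{R}_1)t^{\alpha_3}$. Hence for $t_a\le[\varepsilon|\c_2(0)|/(3C_8(\mathcal{R}+\mathcal{R}_1))]^{1/\alpha_3}$ the ratio $\mathcal{F}_{\nu,a}(\lambda t_a)/\mathcal{F}_{\nu,a}(t_a)$ lies in $(1-\varepsilon,1+\varepsilon)$ up to factors $\varepsilon/3$.

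Finally we combine the pieces. We have $|\log_\lambda(1\pm\varepsilon)|\le \log_\lambda(1-\varepsilon)$, and the condition $\varepsilon<1-\lambda^{(\varepsilon_{II}-\varepsilon_\nu)/3}$ is exactly $3\log_\lambda(1-\varepsilon)<\varepsilon_{II}-\varepsilon_\nu$. The remaining exponent mismatch, between $\nu_1-\nu_{i^*}$ and what the approximate formula returns, is bounded by $\varepsilon_\nu$ using the interval constraints on $\nu_{i^*}$ and $\nu_{1,a}$. Summing gives $\Delta_2<\varepsilon_{II}$ on $(0,T_{II}]$. When $\nu_1$ is known, $\c_2\equiv0$ and $\mathcal{F}_{\nu,a}=\mathcal{F}_\nu$, so only the asymptotic step survives, with $\alpha\nu_1/2$ as the Hölder exponent and the bound $\varepsilon<1-\lambda^{\varepsilon_{II}}$.

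The main obstacle is the sharp quantitative asymptotic step for $\mathcal{F}_\nu$: extracting the exact leading power $t^{\nu_1-\nu_{i^*}}$ with an explicit remainder controlled by $C_9$. We would attempt it by applying $I^{\nu_1-\nu_{i^*}}$-type fractional integration to the identity defining $\mathcal{U}$, together with the semigroup property of Riemann–Liouville integrals.
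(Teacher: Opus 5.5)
\paragraph{Where the proposal falls short.} Your budget and ingredients match the paper's: an $\mathcal{F}_\nu$-asymptotic piece, the crude bound by $\varepsilon_\nu$, and an $\mathcal{F}_{\nu,a}$ piece controlled through $\mathfrak{C}_2(0)\neq 0$ and Lemma~\ref{l2.1}, with $(\varepsilon_{II}-\varepsilon_\nu)/3$ per log-term. Your $\mathcal{F}_{\nu,a}$ step is Proposition~\ref{l5.2}, and your form $\mathcal{F}_{\nu,a}=\mathcal{F}_\nu+\mathfrak{C}_2/\rho_{i^*}$ is the correct one for the I type.

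\paragraph{The gap: the $\mathcal{F}_\nu$ asymptotics.} The genuine gap is the step you flag yourself. The relative bound $|\mathcal{F}_\nu(\lambda t)/(\lambda^{\nu_1-\nu_{i^*}}\mathcal{F}_\nu(t))-1|\le\varepsilon$ below $\min\{(2n^*)^{-1/\nu_0},[C_9\varepsilon/(1+n^*C_9\varepsilon)]^{1/\nu_0}\}$ is asserted with thresholds read off from the statement. A H\"older bound on $\mathcal{U}$ plus $\Gamma\ge\Gamma(1+x^*)$ does not yield them.

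Put $\gamma_3=\nu_1-\nu_{i^*}$. For the I type, $\mathcal{F}_\nu=\mathbf{D}_t^{\nu_{i^*}}\psi=I^{\gamma_3}\mathbf{D}_t^{\nu_1}\psi$ and $\mathbf{D}_t^{\nu_1}\psi=n^*(\mathcal{U}-\mathcal{F}_\nu)$. Fractional integration and the semigroup property then give only the second-kind Abel equation $\mathcal{F}_\nu+n^*I^{\gamma_3}\mathcal{F}_\nu=n^*I^{\gamma_3}\mathcal{U}$.

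The missing idea is to invert this equation with the Mittag-Leffler resolvent $n^*t^{\gamma_3-1}E_{\gamma_3,\gamma_3}(-n^*t^{\gamma_3})$. This gives \eqref{5.5}, $\mathcal{F}_\nu(t)=t^{\gamma_3}\mathcal{G}_{\mathcal{U}}(t,\gamma_3,n^*)$. You then need Lemma~\ref{l3.2}: a lower bound on $|\mathcal{G}_{\mathcal{U}}(t)|$ and an upper bound on $|\mathcal{G}_{\mathcal{U}}(\lambda t)-\mathcal{G}_{\mathcal{U}}(t)|$. These come from complete monotonicity of $E_{\gamma_3,\gamma_3}(-z)$ and from $E_{\theta_1,\theta_2}(-z)\le\Gamma^{-1}(1+x^*)(1-z)^{-1}$ on $[0,1)$. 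That is exactly where $n^*t^{\gamma_3}\le 1/2$ comes from (replaceable by $t\le(2n^*)^{-1/\nu_0}$, since $\nu_0<\nu_1-\nu_{i^*}$ by assumption h2), and where $C_9$ comes from.

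For known $\nu_1$ this is the whole proof, because then $\nu_{i^*}-\nu_{i^*,a}=\log_\lambda|\mathcal{G}_{\mathcal{U}}(\lambda t_a)/\mathcal{G}_{\mathcal{U}}(t_a)|$. As written, that part of the theorem is unproved. Applying $I^{\gamma_3}$ directly to $\mathbf{D}^{\nu_1}_t\psi$ and using Lemma~\ref{l3.3} with a constant kernel would give a valid estimate, but with a threshold different from the stated one.

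\paragraph{The case $\nu_{1,a}\neq\nu_1$.} Here your own decomposition makes that step unnecessary, but your bookkeeping does not reflect it. Your middle term is the exact limit $-(\nu_1-\nu_{i^*})$. Combined with $\nu_1-\nu_{1,a}$ it gives $\nu_{i^*}-\nu_{i^*,a}=\nu_{i^*}-\nu_{1,a}+\log_\lambda|\mathcal{F}_{\nu,a}(\lambda t_a)/\mathcal{F}_{\nu,a}(t_a)|$, which is simply \eqref{2.4}. So no finite-time estimate of $\mathcal{F}_\nu$ enters, the bound $\varepsilon_I$ you derive for the first term plays no role, and only one log-term remains.

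Your count ``three log-terms plus $\varepsilon_\nu$'' is the paper's split \eqref{5.1}, which comes from the finite-time identity of \cite{HPSV}. There the first two terms sum exactly to $\nu_{i^*}-\nu_{1,a}$ by \eqref{5.5}, so Lemma~\ref{l3.2} is dispensable in this case too. Done consistently, your route is leaner than the paper's.

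Either way the crux is $|\nu_{i^*}-\nu_{1,a}|\le\varepsilon_\nu$, which needs $\nu_{1,a}\ge\nu_{i^*}$. Note that \eqref{e.e} only gives $\nu_{1,a}>\nu_1-\nu_{i^*+1}$, not $\nu_{1,a}>\nu_{i^*+1}$ as you claim. The paper also uses this bound without comment.
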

\begin{remark}\label{r2.3}On the one hand, letting
\[ \varepsilon_\nu=\begin{cases}
1-\nu_{i^*+1},\quad
\text{if}\quad i^*\neq M,\\
1-\underline{\nu},\qquad\quad \text{if}\quad i^*=M,
\end{cases}
\]
we can exclude the dependence on $\nu_{1,a}$ in the value of
$\varepsilon_{\nu}$ in Theorem \ref{t2.4} but, on the other hand,
such a choice increases the error  $\varepsilon_{II}$.
\end{remark}
\begin{remark}\label{re.e}
The straightforward computations along with Corollary \ref{c.f1}
show that assumption \eqref{e.e} on $\varepsilon_{I}$ guarantees not
only $\nu_{1,a}<1$ but also $\varepsilon_{\nu}<1$, the latter makes
the  set for $\varepsilon_{II}$ nonempty.
\end{remark}
\begin{remark}\label{r2.5}
Clearly, inequality \eqref{2.15} in the assumption h7 is implicit
requirement on the given data. In Section \ref{s7}, we discuss
sufficient conditions on the data providing \eqref{2.15} and present
an example of both the observation $\psi$ and  the corresponding
initial-boundary value problem which generates this measurement.
\end{remark}

At this point, coming to the SIP and denoting
\begin{align}\label{2.23}\notag
\c_3:&=\c_3(t)=\int_{\Omega}g(x,t)dx+a_0(t)\psi(t)-\mathcal{I}(t),\\
\notag
 \mathcal{R}_2:&=\mathcal{R}_2
(\varphi,b_0,\mathcal{R})=\delta\max\{2,|\partial\Omega|\}\|\varphi\|_{\C^{\alpha,\alpha/2}(\partial\Omega_{t^*})}
+C_0|\Omega|\|b_0\|_{\C^{\alpha,\alpha/2}(\bar{\Omega}_{t^*})}\mathcal{R}(g,u_0,\varphi),\\
 \mathcal{R}_3:&=\mathcal{R}_3
(g,\varphi,a_0,\mathcal{R})=|\Omega|\|g\|_{\C^{\alpha,\alpha/2}(\bar{\Omega}_{t^*})}+\max\{2,
|\partial\Omega|\}\|\varphi\|_{\C^{\alpha,\alpha/2}(\partial\Omega_{t^*})}\\
\notag &
+C_0|\Omega|\|a_0\|_{\C^{\alpha,\alpha/2}(\bar{\Omega}_{t^*})}\mathcal{R}(g,u_0,\varphi),
\end{align}
we claim.
\begin{lemma}\label{l2.2}
Let assumptions of Theorem \ref{t2.2} along with h7 hold. Then, for
$\alpha_4\in(0,\min\{\alpha/2,\nu_1\}]$ and
$\alpha_0\in(0,\tfrac{\alpha\nu_1}{2}]$, there are the following
estimates:

\noindent (i) $\|\c_1\|_{\C^{\alpha_4}([0,t^*])}\leq
\mathcal{R}_2(\varphi,b_0,\mathcal{R})$;

\noindent (ii) $\|\c_3\|_{\C^{\alpha_4}([0,t^*])}\leq
\mathcal{R}_3(g,\varphi,a_0,\mathcal{R})$;

\noindent (iii)
$\|\mathcal{F}_\gamma\|_{\C^{\alpha_0}([0,t^*])}=\|\c_3-\c_\nu\|_{\C^{\alpha_0}([0,t^*])}\leq
\mathcal{R}_3(g,\varphi,a_0,\mathcal{R})+C_3\mathcal{R}(g,u_0,\varphi)$;

\noindent (iv)
$\|\mathcal{F}_{\gamma,a}\|_{\C^{\alpha_4}([0,t_{1}^*])}\leq C_6
\mathcal{R}_1(\rho_1,\mathcal{R},\mathbf{D}_{t}^{\nu_{1,a}}\psi)+
\mathcal{R}_3(g,\varphi,a_0,\mathcal{R})+C_3\mathcal{R}(g,u_0,\varphi)$
with $\alpha_4\in(0,\min\{\alpha_1,\tfrac{\alpha\nu_1}{2}\}]$.
\end{lemma}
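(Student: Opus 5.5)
Each estimate in Lemma \ref{l2.2} is a Hölder norm of a combination of given data and $\psi$. The plan is to bound it term by term with the triangle inequality for $\|\cdot\|_{\C^{\beta}([0,t^*])}$, together with the product rule $\|fg\|_{\C^\beta}\leq \|f\|_{\C^\beta}\|g\|_{\C^\beta}$ (up to the normalization used in the paper). We also use that a $\C^{\beta'}$ function is $\C^{\beta}$ for $\beta\leq\beta'$, since $t^*<1$.

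\textbf{Items (i) and (ii).} We rely on three facts.
\begin{enumerate}
\item Since $g\in\C^{\alpha,\alpha/2}(\bar\Omega_T)$, the map $t\mapsto\int_\Omega g(x,t)dx$ lies in $\C^{\alpha/2}([0,t^*])$ with norm at most $|\Omega|\|g\|_{\C^{\alpha,\alpha/2}}$.
\item The boundary term $\mathcal{I}$ is bounded by $\max\{2,|\partial\Omega|\}\|\varphi\|_{\C^{\alpha,\alpha/2}(\partial\Omega_{t^*})}$. Here the factor $2$ covers the case $n=1$, where $\mathcal{I}=\varphi(\mathfrak{l}_2,t)-\varphi(\mathfrak{l}_1,t)$, and $|\partial\Omega|$ covers $n\geq2$.
\item $\psi=\int_\Omega u\,dx$ satisfies $\|\psi\|_{\C^{\alpha\nu_1/2}([0,t^*])}\leq C_0|\Omega|\mathcal{R}$ by \eqref{2.1}, because $u\in\C^{2+\alpha,\frac{2+\alpha}{2}\nu_1}$ is Hölder of order $\alpha\nu_1/2$ in time. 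For $\alpha_4\leq\min\{\alpha/2,\nu_1\}$ we use instead the bound through $\mathbf{D}_t^{\nu_1}\psi\in\C([0,t^*])$ from \eqref{2.2*}, i.e. $\psi(t)-\psi(0)=O(t^{\nu_1})$, combined with $\alpha_4\leq\alpha/2$ for the data terms.
\end{enumerate}
Multiplying by $b_0$ or $a_0\in\C^{\alpha/2}$ and adding $\delta\mathcal{I}$, respectively $\int_\Omega g\,dx-\mathcal{I}$, gives $\mathcal{R}_2$ and $\mathcal{R}_3$ directly.

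\textbf{Item (iii).} By \eqref{2.2*}, $\mathbf{D}_t\psi=\c_\nu$, so \eqref{i.12*} reads $\mathcal{F}_\gamma=\c_3-\c_\nu$. We then apply the triangle inequality, using (ii) with $\alpha_0\leq\alpha_4$-type embedding and the last estimate of Corollary \ref{c2.1}, $\|\c_\nu\|_{\C^{\alpha_0}}\leq C_3\mathcal{R}$.

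\textbf{Item (iv).} We write
\[
\mathcal{F}_{\gamma,a}=\c_3-\mathbf{D}_t\psi+\c_2,
\]
since replacing $\nu_1$ by $\nu_{1,a}$ in the leading term of $\mathbf{D}_t\psi$ adds exactly $\c_2$ from \eqref{2.13}. This holds for both FDO types, because \eqref{2.5*} differs from $\c_3-\mathbf{D}_t\psi$ only in that term. Then Lemma \ref{l2.1}(ii) bounds $\|\c_2\|_{\C^{\alpha_4}([0,t_1^*])}$ by $C_6\mathcal{R}_1$, valid since $\alpha_4\leq\min\{\alpha_1,\alpha\nu_1/2\}$ under h7. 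Items (ii)–(iii) bound the remaining terms restricted to $[0,t_1^*]\subset[0,t^*]$.

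\textbf{Main obstacle.} The main difficulty is the bookkeeping of exponents in (i)–(ii). We must check that $\psi$ really has Hölder regularity of order $\alpha_4$ with constant $C_0|\Omega|\mathcal{R}$ when $\alpha_4$ may exceed $\alpha\nu_1/2$. If that fails, we would use $\psi\in\C^{\alpha\nu_1/2}_{\nu_1}$ from \eqref{2.2*}, i.e. $\psi$ is $\nu_1$-Hölder up to a constant controlled by $\|\mathbf{D}^{\nu_1}_t\psi\|_{\C}$ via the fractional integral representation. That bound yields the same constant $C_0|\Omega|\mathcal{R}$ through Corollary \ref{c2.1}.
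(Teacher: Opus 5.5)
Your route is the one the paper intends. The paper gives no proof of this lemma; it only points to \eqref{2.1}--\eqref{2.2*}, Corollary~\ref{c2.1}, Lemma~\ref{l2.1} and h3--h5, h7. Items (iii)--(iv) are fine. For (iii), apply (ii) at $\alpha_4=\alpha_0$, which is allowed since $\alpha\nu_1/2<\min\{\alpha/2,\nu_1\}$, and add the last line of Corollary~\ref{c2.1}. For (iv), use the identity $\mathcal{F}_{\gamma,a}=\mathcal{F}_\gamma+\c_2$ of Proposition~\ref{p6.3}(i) together with Lemma~\ref{l2.1}(ii). Two side remarks:
\begin{itemize}
\item \eqref{2.5*} as printed omits $i=i^*$ from the sum, a slip copied from \eqref{2.4*}. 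Your identity holds for the intended definition, with the full sum over $i=2,\dots,M$.
\item Lemma~\ref{l2.1} is stated under Theorem~\ref{t2.1}, but its part (ii) uses only \eqref{2.1}--\eqref{2.2*} and Corollary~\ref{c2.1}, so it carries over to Theorem~\ref{t2.2}.
\end{itemize}

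The step that is not right as written is your fact 3 in (i)--(ii), and it is exactly where the range $\alpha_4\le\min\{\alpha/2,\nu_1\}$ is decided. By Definition~\ref{d2.1}, the norm of $\C^{2+\alpha,\frac{2+\alpha}{2}\nu_1}(\bar\Omega_T)$ contains no time-H\"older seminorm of $u$ itself. So the claim $\|\psi\|_{\C^{\alpha\nu_1/2}}\le C_0|\Omega|\mathcal{R}$ cannot be read off from \eqref{2.1}. Also, $\psi(t)-\psi(0)=O(t^{\nu_1})$ controls only increments from $t=0$, not the seminorm on $[0,t^*]$. What you need is the increment bound for the Riemann--Liouville integral, applied to $\psi-\psi(0)=I^{\nu_1}\mathbf{D}_t^{\nu_1}\psi$:
\[
|\psi(t)-\psi(s)|\le\frac{2\|\mathbf{D}_t^{\nu_1}\psi\|_{\C([0,t^*])}}{\Gamma(1+\nu_1)}(t-s)^{\nu_1},\qquad 0\le s<t\le t^*.
\]
After this, $t^*<1$ gives every $\alpha_4\le\nu_1$.

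Your last paragraph points to this mechanism, but it does not give ``the same constant $C_0|\Omega|\mathcal{R}$''. What you actually get is
\[
\|\psi\|_{\C^{\alpha_4}([0,t^*])}\le|\Omega|\bigl(\|u\|_{\C(\bar\Omega_T)}+\tfrac{2}{\Gamma(1+\nu_1)}\|\mathbf{D}_t^{\nu_1}u\|_{\C(\bar\Omega_T)}\bigr)\le\tfrac{2}{\Gamma(1+x^*)}C_0|\Omega|\mathcal{R}.
\]
The extra factor lies between $2$ and about $2.26$. It is not removed by $(t^*)^{\nu_1-\alpha_4}$ when $\alpha_4=\nu_1$, which the statement allows if $\nu_1\le\alpha/2$.

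So (i)--(ii), with $\mathcal{R}_2,\mathcal{R}_3$ as written, hold only after enlarging $C_0$ by this factor. That is legitimate: Theorem~\ref{t2.1} only asserts that some such $C_0$ exists, and every later use of $C_0$ is as an upper bound. You should say this explicitly rather than claim the constants coincide. With that repair, the rest of your bookkeeping goes through: the triangle inequality, $\|fg\|_{\C^\beta}\le\|f\|_{\C^\beta}\|g\|_{\C^\beta}$, $\|a_0\|_{\C^{\alpha_4}},\|b_0\|_{\C^{\alpha_4}}\le\|\cdot\|_{\C^{\alpha/2}}$, and the factor $\max\{2,|\partial\Omega|\}$ for $\mathcal{I}$.
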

The verification of this claim is a pretty standard but albeit very
technical where the key tools are  Lemma \ref{l2.1}, Theorem
\ref{t2.2} along with the regularity of the corresponding functions
stated in h3-h5 and h7. So we are left it for interested readers.

In further analysis regarding \eqref{2.6.3}, we need an additional
requirement for the memory kernel $\mathcal{K}_0$, which is as
follows.
\begin{description}
 \item[h8]   We assume that
 $\mathcal{K}_0\in\C^{\alpha_5}([0,t^*])$ for some $\alpha_5\in(0,1).$
     \end{description}
 \begin{theorem}\label{t2.5}
Let $\varepsilon_{I}\in(0,1-\bar{\nu}),$ $\gamma\in(\bar{\gamma},1)$
for some given $\bar{\gamma}\in(0,1)$, and let requirements of
Theorem \ref{t2.2} along with h7 and h8 hold. Then for any
$\mu\in(0,1)$ and $\varepsilon_{III}\in(1-\bar{\gamma},1)$, and each
$\varepsilon\in(0,1-\mu^{\frac{\varepsilon_{III}+\bar{\gamma}-1}{3}}),$
the bound \eqref{2.6.3} is true for each $t_a\in(0,T_{III}]$ with
some positive $T_{III}$ satisfying the inequality
\[
T_{III}\leq \min\{T_{I}^{0},T_{K},t_1^*\},
\]
where $T_{I}^{0}$ and $T_K$ are given by \eqref{2.10*} and
\eqref{2.10}, respectively.

\noindent Moreover, if $M\geq 2,$ then
\begin{equation*}
T_{III}=\min\Big\{t_1^*,T_{I},T_{K},\Big(\frac{|\c_1(0)||\mathcal{K}_{0}(0)|\Gamma(1+x^*)\varepsilon}{3[\langle\mathcal{K}_0\rangle_{t,[0,t^*]}^{(\alpha_5)}|\c_{1}(0)|
+\|\mathcal{K}_{0}\|_{\C([0,t^*])}\mathcal{R}_2]}
\Big)^{1/\alpha_6},
\Big(\frac{\varepsilon|\mathcal{F}_{\gamma,a}(0)|}{3[C_3\mathcal{R}+C_6\mathcal{R}_1+\mathcal{R}_3]}\Big)^{1/\alpha_7}
 \Big\}
\end{equation*}
with
$\alpha_6=\min\{\alpha_5,\tfrac{\alpha}{2},\tfrac{2\nu_2}{2-\alpha}\}$
and  $\alpha_7=\min\{\alpha_1,\tfrac{\alpha\nu_2}{2}\}$.

If the order $\nu_1$ is known (i.e. $\nu_1=\nu_{1,a}$), then for any
$\varepsilon_{III},\mu\in(0,1)$ and each
$\varepsilon\in(0,1-\mu^{\varepsilon_{III}})$,  estimate
\eqref{2.6.3} holds for every $t_{a}\in(0,T_{III}]$, where
\begin{equation*}
T_{III}=\min\Big\{t^*,\Big(\frac{|\c_1(0)||\mathcal{K}_{0}(0)|\Gamma(1+x^*)\varepsilon}{3[\langle\mathcal{K}_0\rangle_{t,[0,t^*]}^{(\alpha_5)}|\c_{1}(0)|
+\|\mathcal{K}_{0}\|_{\C([0,t^*])}\mathcal{R}_2]} \Big)^{1/\alpha_6}
 \Big\}
\end{equation*}
 with
$\alpha_6=\min\{\alpha_5,\tfrac{\alpha}{2},\tfrac{2\nu_1}{2-\alpha}\}$.
 \end{theorem}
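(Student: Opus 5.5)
The key is to express $\mathcal{F}_{\gamma,a}$ through the memory structure of the equation and extract its leading small-time asymptotics $\sim t^{1-\gamma}$. Integrating \eqref{i.2} over $\Omega$ and using the Neumann condition \eqref{i.3}, as in \cite{HPV}, we get
\[
\mathbf{D}_t\psi=\c_3(t)+\mathcal{K}*\c_1(t)
\]
on $[0,t^*]$, with $\c_1,\c_3$ from \eqref{c.1}, \eqref{2.23}. This is the identity behind \eqref{2.2*} with $\mathbf{D}_t\psi=\c_\nu$. Hence $\mathcal{F}_\gamma=-\mathcal{K}*\c_1$. Moreover, \eqref{2.5*} gives
\[
\mathcal{F}_{\gamma,a}=\mathcal{F}_\gamma+\c_2=-\mathcal{K}*\c_1+\c_2,
\]
with $\c_2$ as in \eqref{2.13}.

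For the exact part I will write
\[
\mathcal{K}*\c_1(t)=\mathcal{K}_0(0)\c_1(0)B(1-\gamma,1)\,t^{1-\gamma}+r(t).
\]
The remainder $r$ collects the increments $\mathcal{K}_0(s)-\mathcal{K}_0(0)$ and $\c_1(t-s)-\c_1(0)$. By h8 and Lemma \ref{l2.2}(i), these give
\[
|r(t)|\le t^{1-\gamma}\,t^{\alpha_6}\,\Gamma(1-\gamma)^{-1}\bigl[\langle\mathcal{K}_0\rangle^{(\alpha_5)}|\c_1(0)|+\|\mathcal{K}_0\|_{\C}\mathcal{R}_2\bigr]\times\text{const},
\]
where $B(1-\gamma,1)=1/(1-\gamma)$ and the Beta/Gamma factors are bounded below using $\Gamma(1+x^*)$. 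Consequently $\mathcal{F}_\gamma(t)=\Lambda t^{1-\gamma}(1+\theta(t))$, with $\Lambda=-\mathcal{K}_0(0)\c_1(0)/(1-\gamma)\neq0$. On $t\le T_{III}$, the choice of the fourth term in $T_{III}$ forces $|\theta(t)|\le\varepsilon/3$. Restricting to $t\le T_K$ keeps the sign of $\mathcal{K}_0$ fixed, so that the $L^1$-dependent constants of $C_0$ are not needed (cf. Remark \ref{r2.2}).

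For the perturbation $\c_2$ in the unknown-$\nu_1$ case, I expect the mechanism to be the following. If $\c_2(0)\neq0$ (h7), then near $0$ the function $\mathcal{F}_{\gamma,a}$ is dominated by the constant $\mathcal{F}_{\gamma,a}(0)=\c_2(0)$, since $t^{1-\gamma}\to0$. By Lemma \ref{l2.2}(iv) and Hölder continuity with exponent $\alpha_7$,
\[
\mathcal{F}_{\gamma,a}(t)=\mathcal{F}_{\gamma,a}(0)(1+\eta(t)),\qquad |\eta|\le\varepsilon/3
\]
for $t\le$ the last entry of $T_{III}$. Then
\[
\log_\mu\Bigl|\frac{\mathcal{F}_{\gamma,a}(\mu t_a)}{\mathcal{F}_{\gamma,a}(t_a)}\Bigr|
\]
lies within $\log_\mu$ of $(1\pm\varepsilon/3)/(1\mp\varepsilon/3)$ of $0$. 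Hence $\gamma_a$ is near $1$ and $\Delta_3\le 1-\gamma+|\log_\mu(1-\varepsilon)|<(1-\bar\gamma)+(\varepsilon_{III}+\bar\gamma-1)=\varepsilon_{III}$, using $\gamma>\bar\gamma$ and $\varepsilon<1-\mu^{(\varepsilon_{III}+\bar\gamma-1)/3}$. This is exactly why $\varepsilon_{III}>1-\bar\gamma$ is required. The conditions $T_I$ and $\varepsilon_I<1-\bar\nu$ guarantee $\nu_{1,a}<1$ via Corollary \ref{c.f1}, so that $\mathbf{D}_t^{\nu_{1,a}}$ and h7 make sense. Taking $M\ge2$ allows $\alpha_7$ to use $\nu_2$ through Theorem \ref{t2.3}.

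In the known-$\nu_1$ case $\c_2\equiv0$, so $\mathcal{F}_{\gamma,a}=\mathcal{F}_\gamma$. The ratio then equals $\mu^{1-\gamma}(1+\theta(\mu t))/(1+\theta(t))$, and $|\log_\mu|\cdot|(1+\theta(\mu t))/(1+\theta(t))|\le\log_\mu(1-\varepsilon)<\varepsilon_{III}$ once $|\theta|\le\varepsilon/3$ and $\varepsilon<1-\mu^{\varepsilon_{III}}$. Only the $\alpha_6$-constraint and $t^*$ remain in $T_{III}$.

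The main obstacle will be the remainder estimate for $\mathcal{K}*\c_1$ with explicit constants. It requires splitting $\int_0^t s^{-\gamma}[\mathcal{K}_0(s)\c_1(t-s)-\mathcal{K}_0(0)\c_1(0)]ds$ into the two Hölder increments and computing the Beta integrals $B(1-\gamma,1+\alpha_5)$, $B(1-\gamma,1+\alpha_4)$. These must then be bounded relative to the leading coefficient uniformly in $\gamma\in(\bar\gamma,1)$, where the factor $1/(1-\gamma)$ must cancel correctly. I would first try normalizing by $t^{1-\gamma}/(1-\gamma)$ and using $B(a,1+b)(1-\gamma)\le \Gamma(1+b)\Gamma(2-\gamma)/\Gamma(2-\gamma+b)\le1/\Gamma(1+x^*)$.
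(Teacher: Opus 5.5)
Your argument is correct. For unknown $\nu_1$ it takes a shorter route than the paper; for known $\nu_1$ it is essentially the paper's argument.

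In Section \ref{s6} the paper writes $\gamma-\gamma_a$ as a sum of three logarithms:
the $G_{\mathfrak{C}_1}$-ratio, bounded via Lemma \ref{l3.3} and Lemma \ref{l6.1} (this is where h8, the $\mathcal{K}_0$-constants and the $\alpha_6$-entry of $T_{III}$ enter);
the $\mathcal{F}_\gamma$-ratio, bounded in \eqref{6.3.2} by $1-\bar\gamma+\varepsilon_{3,1}$ using $\mathcal{F}_\gamma(t)=t^{1-\gamma}G_{\mathfrak{C}_1}(t)$;
and the $\mathcal{F}_{\gamma,a}$-ratio, handled in Lemma \ref{l6.3}.
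Because $\mathcal{F}_\gamma=t^{1-\gamma}G_{\mathfrak{C}_1}$, the first two logarithms add up identically to $\gamma-1$. So the paper's splitting is just your identity $\gamma-\gamma_a=(\gamma-1)+\log_\mu|\mathcal{F}_{\gamma,a}(\mu t_a)/\mathcal{F}_{\gamma,a}(t_a)|$, read off from \eqref{2.5}, followed by a lossy triangle inequality. Your control of the last term through $\mathcal{F}_{\gamma,a}(0)=\mathfrak{C}_2(0)\neq0$ and Lemma \ref{l2.2}(iv) is exactly Lemma \ref{l6.3}.

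Your version buys two things. It shows that h8, $T_K$ and the $\alpha_6$-entry are not needed when $\nu_1$ is unknown; they only shrink $T_{III}$, so the stated result still follows. It also makes explicit that under h7 one has $\gamma_a\to1$, which is the real reason for the threshold $\varepsilon_{III}>1-\bar\gamma$. The paper's version gains only a three-term bookkeeping parallel to Theorem \ref{t2.4}. In the known-$\nu_1$ case your normalized remainder $\theta=(G_{\mathfrak{C}_1}-G_{\mathfrak{C}_1}(0))/G_{\mathfrak{C}_1}(0)$ is the same computation as in Lemma \ref{l3.3} and Proposition \ref{p6.1}.

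Three small corrections.

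First, integrating the equation gives $\mathfrak{C}_\nu=\mathfrak{C}_3-\mathcal{K}*\mathfrak{C}_1$, so $\mathcal{F}_\gamma=+\mathcal{K}*\mathfrak{C}_1$. This is harmless, since only moduli enter.

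Second, the factor $\Gamma(1-\gamma)^{-1}$ in your displayed bound on $r$ should not be there: the $\mathcal{K}_0$-increment contributes $t^{1-\gamma+\alpha_5}/(1-\gamma+\alpha_5)$. The correct uniform statement is the one you describe at the end, after normalizing by $t^{1-\gamma}/(1-\gamma)$. It gives $|\theta(t)|\le t^{\alpha_6}\bigl[\langle\mathcal{K}_0\rangle^{(\alpha_5)}_{t,[0,t^*]}|\mathfrak{C}_1(0)|+\|\mathcal{K}_0\|_{\mathcal{C}([0,t^*])}\mathcal{R}_2\bigr]/|\mathcal{K}_0(0)\mathfrak{C}_1(0)|$.

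Third, Lemma \ref{l2.2}(i) only provides Hölder exponents for $\mathfrak{C}_1$ up to $\min\{\alpha/2,\nu_1\}$. For known $\nu_1$, the bound on $\theta$ is therefore justified with exponent $\min\{\alpha_5,\alpha/2,\nu_1\}$, as in Proposition \ref{p6.1}, and not with the $2\nu_1/(2-\alpha)$ appearing in the statement. The paper's own proof has the same mismatch. It does not arise for $M\ge2$, since $2\nu_2/(2-\alpha)<\nu_1$.
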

\begin{remark}\label{r2.f}
Finally, we remark, that in the case of the I type FDO, the
regularity of the coefficients $\rho_i(t)$, $i-1,...,M,$ can be
relaxed. Namely, we need in $\rho_i\in \C^{\alpha/2}[0,T]$ (see for
details \cite[Remark 4.5]{PSV1} and \cite[Theorem 3.1]{SV1}).
\end{remark}

The proof of Theorems \ref{t2.3}-\ref{t2.5}, which are rather
technical, will be postponed to the forthcoming Sections
\ref{s4}-\ref{s6}, while the analysis  of condition \eqref{2.15} in
h7 will be carried out in Section \ref{s7}.

\section{Auxiliary Results}
\label{s3}

\noindent Here, we establish some technical results playing a
crucial role in the proof of main results. We start by considering
the function $F=F(t):[0,T^*]\to\R$ vanishing at $t=0$ (i.e.
$F(0)=0$) and possessing  the following properties: \textit{for each
$\varepsilon^*\in(0,1),$ there exists a positive
$t_{\varepsilon}:=t(\varepsilon^*)\in(0,T^*]$ such that}
\begin{equation}\label{3.1}
F\in\C([0,t_{\varepsilon}])\quad and \quad |F(t)|\leq \varepsilon^*
\quad for\,\, any\quad t\in[0,t_{\varepsilon}].
\end{equation}
Performing simple technical calculations, we proceed with the two
straightforward claims.
\begin{corollary}\label{c3.1}
Let $\varepsilon^*\in(0,1)$ and $F$ meet requirements \eqref{3.1},
then
\begin{equation*}\label{3.2}
|\ln|1+F(t)||\leq |\ln(1-\varepsilon^*)|
\end{equation*}
for all $t\in[0,t_{\varepsilon}]$.

\noindent If in additionally $T^{*}\leq 1$, then for each
$\varepsilon_1^*\in(0,1)$ there is the bound
\begin{equation*}\label{3.3}
\frac{|\ln|1+F(t)||}{|\ln t|}\leq\varepsilon^*_{1}
\end{equation*}
for all $t\in[0,t_{1}^*]$ with
\[
t_1^*=\min\{T^*,t_{\varepsilon},(1-\varepsilon^*)^{1/\varepsilon_1^*}\}.
\]
\end{corollary}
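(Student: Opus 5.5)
The first bound follows from the elementary two-sided control of $1+F(t)$. By \eqref{3.1}, for every $t\in[0,t_\varepsilon]$ we have $1-\varepsilon^*\le 1+F(t)\le 1+\varepsilon^*$, and in particular $1+F(t)>0$, so $|1+F(t)|=1+F(t)$. There are two cases. If $F(t)\ge0$, then $0\le\ln(1+F(t))\le\ln(1+\varepsilon^*)$. If $F(t)<0$, then $0<-\ln(1+F(t))\le-\ln(1-\varepsilon^*)=|\ln(1-\varepsilon^*)|$. It remains to compare the two constants: $\ln(1+\varepsilon^*)\le|\ln(1-\varepsilon^*)|$. This holds because $(1+\varepsilon^*)(1-\varepsilon^*)=1-(\varepsilon^*)^2\le1$, hence $\ln(1+\varepsilon^*)\le-\ln(1-\varepsilon^*)$. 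Combining the cases gives $|\ln|1+F(t)||\le|\ln(1-\varepsilon^*)|$ on $[0,t_\varepsilon]$.

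For the second bound, assume $T^*\le1$ and take $t\in(0,t_1^*]$. The point $t=0$ is understood by continuity, since there the quotient vanishes because $F(0)=0$ and $|\ln t|\to\infty$. The interval $(0,t_1^*]$ lies inside $(0,t_\varepsilon]$, so the first part applies and bounds the numerator by $|\ln(1-\varepsilon^*)|=-\ln(1-\varepsilon^*)$. Since $t\le t_1^*\le T^*\le1$, we have $\ln t\le0$ and $|\ln t|=-\ln t$.

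Because $t\mapsto-\ln t$ is decreasing, the constraint $t\le(1-\varepsilon^*)^{1/\varepsilon_1^*}$ gives
\[
-\ln t\ \ge\ -\frac{1}{\varepsilon_1^*}\ln(1-\varepsilon^*)=\frac{|\ln(1-\varepsilon^*)|}{\varepsilon_1^*}.
\]
Dividing the bound on the numerator by this lower bound on the denominator yields
\[
\frac{|\ln|1+F(t)||}{|\ln t|}\le\varepsilon_1^*.
\]

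The degenerate case $t=1$ can arise only when $t_1^*=1$. That forces $(1-\varepsilon^*)^{1/\varepsilon_1^*}\ge1$, which is impossible since $\varepsilon^*\in(0,1)$ makes $1-\varepsilon^*<1$. Hence $t_1^*<1$ and the denominator never vanishes.

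The only point needing care is the sign and modulus bookkeeping, namely the comparison $\ln(1+\varepsilon^*)\le|\ln(1-\varepsilon^*)|$; the rest is monotonicity of the logarithm.
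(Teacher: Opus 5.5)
Your proof is correct: the two-sided bound $1-\varepsilon^*\le 1+F(t)\le 1+\varepsilon^*$ together with $\ln(1+\varepsilon^*)\le|\ln(1-\varepsilon^*)|$ gives the first estimate. For the second, the constraint $t\le(1-\varepsilon^*)^{1/\varepsilon_1^*}<1$ gives $|\ln t|\ge|\ln(1-\varepsilon^*)|/\varepsilon_1^*$, and the point $t=0$ is handled by the obvious convention. The paper gives no written proof beyond calling these ``simple technical calculations'', and your argument is exactly the elementary verification intended.
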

\begin{corollary}\label{c3.2}
Let $T^*\leq 1,$ $\varepsilon_2^*\in(0,1)$ and $\lambda\in(0,1)$. If
$F$ satisfies \eqref{3.1} with
$\varepsilon^*=\varepsilon^*(\lambda,\varepsilon_2^*)\in(0,1-\lambda^{\varepsilon_2^*})$,
then  the bound
\[
|\log_{\lambda}|1+F(t)||\leq \varepsilon_2^*
\]
holds for each $t\in[0,t_2^*]$ with
$t_2^*=t_{\varepsilon^*(\lambda,\varepsilon_2^*)}=t(\varepsilon^*(\lambda,\varepsilon_2^*))$.
\end{corollary}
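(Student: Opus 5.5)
The claim is Corollary \ref{c3.2}. The approach is to bound $|1+F(t)|$ between $1-\varepsilon^*$ and $1+\varepsilon^*$, and then convert this into a bound on $|\log_{\lambda}|1+F(t)||$. The convergence time $t_2^*$ is just the time $t_{\varepsilon^*}$ from \eqref{3.1} for the chosen $\varepsilon^*$, so the whole argument is pointwise in $t$ once \eqref{3.1} is assumed.

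Fix $\lambda,\varepsilon_2^*\in(0,1)$ and pick $\varepsilon^*\in(0,1-\lambda^{\varepsilon_2^*})$. Note that $1-\lambda^{\varepsilon_2^*}\in(0,1)$, so such a choice exists. By \eqref{3.1}, for every $t\in[0,t_{\varepsilon^*}]$ we have $|F(t)|\le\varepsilon^*<1$. Hence
\[
0<1-\varepsilon^*\le|1+F(t)|\le 1+\varepsilon^*,
\]
and $\log_\lambda|1+F(t)|$ is well defined.

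Since $\lambda\in(0,1)$, we can write
\[
|\log_\lambda|1+F(t)||=\frac{|\ln|1+F(t)||}{|\ln\lambda|}.
\]
It therefore suffices to show $|\ln|1+F(t)||\le\varepsilon_2^*|\ln\lambda|=-\ln\lambda^{\varepsilon_2^*}$. Split into two cases, according to the sign of $\ln|1+F(t)|$.

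\emph{Case $|1+F(t)|<1$.} The first part of Corollary \ref{c3.1} gives
\[
|\ln|1+F(t)||\le-\ln(1-\varepsilon^*).
\]
The choice $\varepsilon^*<1-\lambda^{\varepsilon_2^*}$ means $1-\varepsilon^*>\lambda^{\varepsilon_2^*}$. Hence $-\ln(1-\varepsilon^*)<-\ln\lambda^{\varepsilon_2^*}$, which is the required bound.

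\emph{Case $|1+F(t)|\ge1$.} Here
\[
|\ln|1+F(t)||\le\ln(1+\varepsilon^*)\le-\ln(1-\varepsilon^*),
\]
where the last inequality follows from $(1+\varepsilon^*)(1-\varepsilon^*)\le1$. The same chain as in the first case then closes the argument.

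Combining the two cases yields $|\log_\lambda|1+F(t)||\le\varepsilon_2^*$ for all $t\in[0,t_2^*]$ with $t_2^*=t_{\varepsilon^*(\lambda,\varepsilon_2^*)}$. The hypothesis $T^*\le1$ is not actually needed here; it is only carried over for consistency with Corollary \ref{c3.1}.

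The only point needing any care is the upper case $|1+F(t)|\ge1$. It is handled by the elementary inequality $\ln(1+x)\le-\ln(1-x)$ for $x\in(0,1)$, which shows that the lower bound $1-\varepsilon^*$ is always the binding constraint.
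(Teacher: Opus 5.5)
Your proof is correct and is essentially the computation the paper intends. The paper gives no written proof and calls Corollaries \ref{c3.1}--\ref{c3.2} straightforward; the implicit route is
$|\log_\lambda|1+F(t)||=|\ln|1+F(t)||/|\ln\lambda|\le|\ln(1-\varepsilon^*)|/|\ln\lambda|<\varepsilon_2^*$.
This uses the first part of Corollary \ref{c3.1} together with $1-\varepsilon^*>\lambda^{\varepsilon_2^*}$.

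Your case split is redundant but harmless. Corollary \ref{c3.1} already asserts $|\ln|1+F(t)||\le|\ln(1-\varepsilon^*)|$ for both signs of $\ln|1+F(t)|$, and your inequality $\ln(1+\varepsilon^*)\le-\ln(1-\varepsilon^*)$ is exactly what justifies that case. You are also right that $T^*\le 1$ plays no role here.
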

The following two statements will be used in the further analysis to
verify Theorem \ref{t2.3}.
\begin{corollary}\label{c3.3}
Let a positive $T^*\leq 1$ be given, and let a continuous function
$w=w(t):[0,T^*]\to\R$ admit the following representation
\[
w(t)-w(0)=\frac{C_1^* t^{\theta}}{\Gamma(1+\theta)}+w_{1}(t)
\]
for each $t\in[0,T^*]$, where
  $\theta\in(0,1)$,  $C_{1}^*\neq 0$, and
 a continuous function $w_1=w_1(t):[0,T^*]\to\R$ satisfies  the
 inequality
 \[
|t^{-\theta}w_1(t)|\leq C_2^* t^{\theta^*}
 \]
 for all $t\in[0,T^*]$, where $\theta^*$ and $C^{*}_2$ are positive
 constants.

 \noindent Then for any  $\varepsilon^*,\varepsilon_3^*\in(0,1)$, the
 estimates
 \begin{equation*}\label{3.4}
\Big|\frac{\Gamma(1+\theta)w_1(t)}{t^{\theta}C_1^*}\Big|\leq
\varepsilon^*\qquad\text{and}\qquad
\Big|\theta-\frac{\ln|w(t)-w(0)|}{\ln t}\Big|\leq \varepsilon_3^*
 \end{equation*}
 hold for each positive $t$ satisfying the bound
 \begin{equation}\label{3.5}
t<\min\Big\{T^*,(|C_1^*|\Gamma(1+x^*))^{2/\varepsilon_3^*},(\Gamma(1+x^*)/|C_1^*|)^{2/\varepsilon_3^*},
(1-\varepsilon^*)^{2/\varepsilon_3^*},
(|C_1^*|\varepsilon^*/C_2^*)^{1/\theta^*} \Big\}.
 \end{equation}
\end{corollary}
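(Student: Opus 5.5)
The plan is to factor out the leading term and reduce everything to Corollary \ref{c3.1}. Write
\[
w(t)-w(0)=\frac{C_1^*t^{\theta}}{\Gamma(1+\theta)}\bigl(1+F(t)\bigr),\qquad F(t):=\frac{\Gamma(1+\theta)w_1(t)}{t^{\theta}C_1^*}.
\]
The first estimate is then just $|F(t)|\le\varepsilon^*$. The hypothesis on $w_1$ gives $|F(t)|\le \Gamma(1+\theta)C_2^*t^{\theta^*}/|C_1^*|$. Since $\theta\in(0,1)$, we have $\Gamma(1+\theta)\le1$. Hence $|F(t)|\le\varepsilon^*$ as soon as $t\le(|C_1^*|\varepsilon^*/C_2^*)^{1/\theta^*}$, which is the last entry of the minimum in \eqref{3.5}. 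In particular $F$ is continuous on $(0,t]$, tends to $0$ at $0$, and satisfies \eqref{3.1} with $t_\varepsilon$ equal to that bound. Also $1+F\neq0$, so $w(t)-w(0)\neq0$ and the logarithm is defined.

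Next take logarithms. Using $T^*\le1$, so that $\ln t<0$,
\[
\theta-\frac{\ln|w(t)-w(0)|}{\ln t}=-\frac{\ln\bigl(|C_1^*|/\Gamma(1+\theta)\bigr)}{\ln t}-\frac{\ln|1+F(t)|}{\ln t}.
\]
I will bound each of the two terms by $\varepsilon_3^*/2$.

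For the second term, apply Corollary \ref{c3.1} with $\varepsilon_1^*=\varepsilon_3^*/2$. This gives $|\ln|1+F||/|\ln t|\le\varepsilon_3^*/2$ for $t\le(1-\varepsilon^*)^{2/\varepsilon_3^*}$, which is the fourth entry in \eqref{3.5}. Directly: $|\ln|1+F||\le|\ln(1-\varepsilon^*)|$, and $|\ln t|\ge (2/\varepsilon_3^*)|\ln(1-\varepsilon^*)|$ in that range.

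For the first term, the key input is $\Gamma(1+x^*)\le\Gamma(1+\theta)\le1$. This yields
\[
\ln|C_1^*|\le\ln\frac{|C_1^*|}{\Gamma(1+\theta)}\le\ln\frac{|C_1^*|}{\Gamma(1+x^*)},
\]
so
\[
\Bigl|\ln\frac{|C_1^*|}{\Gamma(1+\theta)}\Bigr|\le\max\Bigl\{\bigl|\ln(|C_1^*|\Gamma(1+x^*))\bigr|,\ \Bigl|\ln\frac{|C_1^*|}{\Gamma(1+x^*)}\Bigr|\Bigr\},
\]
where the left bound uses $|C_1^*|\ge |C_1^*|\Gamma(1+x^*)$. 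It remains to have $|\ln t|\ge(2/\varepsilon_3^*)$ times each of these two quantities. That is, $t\le (|C_1^*|\Gamma(1+x^*))^{2/\varepsilon_3^*}$ when $|C_1^*|\Gamma(1+x^*)<1$, and $t\le(\Gamma(1+x^*)/|C_1^*|)^{2/\varepsilon_3^*}$ when $|C_1^*|/\Gamma(1+x^*)>1$.

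The main obstacle is handling signs here. The entries of \eqref{3.5} are only meaningful (smaller than $1$) in the relevant sign cases; otherwise they exceed $1$ and are inactive, since $T^*\le1$ already caps $t$. I will therefore check case by case that whenever a logarithm has the "dangerous" sign, the corresponding entry of the minimum enforces the needed bound. The cases are: $|C_1^*|\le\Gamma(1+x^*)$, $\Gamma(1+x^*)<|C_1^*|<1/\Gamma(1+x^*)$, and $|C_1^*|\ge1/\Gamma(1+x^*)$.

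Combining the two halves gives the second estimate for every $t$ below the minimum in \eqref{3.5}.
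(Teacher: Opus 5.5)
Your proof is correct and follows the paper's argument. You factor out $C_1^*t^{\theta}/\Gamma(1+\theta)$ and bound both $|\ln(|C_1^*|/\Gamma(1+\theta))|/|\ln t|$ and $|\ln|1+F(t)||/|\ln t|$ by $\varepsilon_3^*/2$, the latter via Corollary \ref{c3.1} with $\varepsilon_1^*=\varepsilon_3^*/2$. Note also that your maximum $\max\{|\ln(|C_1^*|\Gamma(1+x^*))|,\,|\ln(|C_1^*|/\Gamma(1+x^*))|\}$ equals exactly $|\ln|C_1^*||+|\ln\Gamma(1+x^*)|$, which is the bound the paper uses, so your three-case check reduces to the two cases $|C_1^*|\ge 1$ and $|C_1^*|<1$.
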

\begin{proof}
Performing straightforward  calculations and utilizing the
representation of $w(t)$ arrive at the inequality
\[
\Big|\theta-\frac{\ln|w(t)-w(0)|}{\ln t}\Big|\leq
\frac{|\ln|C_1^*||+|\ln\Gamma(1+x^*)|}{|\ln t|}+
\Big|\frac{\ln|1+\Gamma(1+\theta)w_1(t)t^{-\theta}/C_1^*|}{\ln
t}\Big|.
\]
At this point,  for each given $\varepsilon^*$ and
$\varepsilon_3^*$, we easily deduce that
\[
\frac{|\ln|C_1^*||+|\ln\Gamma(1+x^*)|}{|\ln t|}\leq
\frac{\varepsilon_3^*}{2},
\]
for every positive $t$ satisfying the inequality
\[
t<\min\Big\{T^*,(|C_1^*|\Gamma(1+x^*))^{2/\varepsilon_3^*},(\Gamma(1+x^*)/|C_1^*|)^{2/\varepsilon_3^*}
\Big\},
\]
while
\[
|\Gamma(1+\theta)w_1(t)t^{-\theta}/C_1^*|\leq \varepsilon^*
\]
for every $t\in[0,(|C_1^*|\varepsilon^*/C_2^*)^{1/\theta^*})$.

\noindent Then,  the last inequality along with  Corollary
\ref{c3.1} with $F=\Gamma(1+\theta)w_1(t)t^{-\theta}/C_1^*$ provide
the estimate
\[
\Big|\frac{\ln|1+\Gamma(1+\theta)w_1(t)t^{-\theta}/C_1^*|}{\ln
t}\Big|\leq \frac{\varepsilon_3^*}{2}
\]
for all $t\in[0,t_1^*]$, where we set
\[
t_1^*=\min\Big\{T^*, (1-\varepsilon^*)^{2/\varepsilon_3^*},
(|C_1^*|\varepsilon^*/C_2^*)^{1/\theta^*} \Big\}.
\]
In conclusion, collecting all inequalities, we  arrive at the
desired bounds  if positive $t$ satisfies \eqref{3.5}.
\end{proof}
Next, for given numbers $\mu_k,$ $k=0,1,...,K,$ $0<\mu_k<\mu_0<1$,
and given functions $v=v(t):[0,T]\to\R$ and
$r_{k}=r_{k}(t):[0,T]\to\R$
 having the corresponding fractional derivatives, we
set
\begin{align*}
J_{\mu}(v,t)&=\int_{0}^{t}(t-\tau)^{\mu-1}[\mathbf{D}_{\tau}^{\mu}v(\tau)-\mathbf{D}_{\tau}^{\mu}v(0)]d\tau,\quad\mu\in(0,1),\\
\mathbf{D}_{t}^{I}v(t)&=\sum_{k=0}^{K}r_{k}(t)\mathbf{D}_{t}^{\mu_k}v(t),\qquad
\mathbf{D}_{t}^{II}v(t)=\sum_{k=0}^{K}\mathbf{D}_{t}^{\mu_k}(r_{k}(t)v(t)),
\end{align*}
\begin{lemma}\label{l3.1}
Let a positive given $T^*<1$,  $0<\mu_k<\mu_0<1,$ $k=1,...,K,$ and
$\mu_k\neq\mu_i,$ if $k\neq i.$ We assume that

\noindent(i) $r_k\in\C^{1}([0,T^*])$ and $r_0>0$ for all
$t\in[0,T^*];$

\noindent(ii) a continuous  function $v$ on $[0,T^*]$ has fractional
derivatives $\mathbf{D}_{t}^{\mu_k}v\in\C^{\mu^*}([0,T^*])$ with
$\mu^*\in(0,\mu_0]$.

Then, for any $\varepsilon^*,\varepsilon_4^*\in(0,1)$ and each
$t\in[0, t_2^*(\varepsilon^*)],$ the following estimates hold
\begin{align*}
\Big|\mu_0-\frac{\ln|v(t)-v(0)|}{\ln
t}\Big|&<\varepsilon_4^*,\quad\text{if}\quad
\mathbf{D}_{t}^{I}v(0)\neq 0,\\
\Big|\mu_0-\frac{\ln|r_0(t)v(t)-r_0(0)v(0)|}{\ln
t}\Big|&<\varepsilon_4^*,\quad\text{if}\quad
\mathbf{D}_{t}^{II}v(0)\neq 0,\
\end{align*}
where
\[
t_2^*(\varepsilon^*)=\begin{cases} \min\Big\{T^*,
\Big(\frac{\Gamma(1+x^*)|\mathbf{D}_t^{I}v(0)|}{r_0(0)}\Big)^{\frac{2}{\varepsilon_4^*}},
\Big(\frac{\Gamma(1+x^*)r_0(0)}{|\mathbf{D}_t^{I}v(0)|}\Big)^{\frac{2}{\varepsilon_4^*}},
(1-\varepsilon^*)^{\frac{2}{\varepsilon_4^*}},\Big(\frac{\varepsilon^*|\mathbf{D}_t^{I}v(0)|}{r_0(0)C_3^*}\Big)^{\frac{1}{\nu^*}}
\Big\}\,\text{if }\, \mathbf{D}_{t}^{I}v(0)\neq 0,\\
\\
 \min\Big\{T^*,
(\Gamma(1+x^*)|\mathbf{D}_t^{II}v(0)|)^{\frac{2}{\varepsilon_4^*}},
\Big(\frac{\Gamma(1+x^*)}{|\mathbf{D}_t^{II}v(0)|}\Big)^{\frac{2}{\varepsilon_4^*}},
(1-\varepsilon^*)^{\frac{2}{\varepsilon_4^*}},\Big(\frac{\varepsilon^*|\mathbf{D}_t^{II}v(0)|}{C_3^*}\Big)^{\frac{1}{\nu^*}}
\Big\}\,\text{if }\,\mathbf{D}_{t}^{II}v(0)\neq 0,
\end{cases}
\]
with $\nu^*=\min\{\mu^*,\mu_0-\mu_1,\mu_0-\mu_2,...,\mu_0-\mu_{K}\}$
and
\[
C_3^*=\begin{cases}
\frac{\|\mathbf{D}_t^{\mu_0}v\|_{\C^{\mu^*}([0,T^*])}}{\Gamma(1+x^*)}\Big(1+\sum\limits_{k=1}^{K}\frac{|r_k(0)|}{r_0(0)\Gamma(1+x^*)}\Big)
+
\sum\limits_{k=1}^{K}\frac{|r_k(0)|\langle\mathbf{D}_t^{\mu_k}v\rangle_{t,[0,T^*]}^{(\mu^*)}}{r_0(0)\Gamma^2(1+x^*)},
\quad\text{if}\quad \mathbf{D}_{t}^{I}v(0)\neq 0,\\
\\
\frac{\langle\mathbf{D}_t^{\mu_0}r_0v\rangle_{t,[0,T^*]}^{(\mu^*)}}{\Gamma(1+x^*)}+
\sum\limits_{k=1}^{K}\frac{\|\mathbf{D}_t^{\mu_0}r_kv\|_{\C([0,T^*])}}{\Gamma^2(1+x^*)}
+
\sum\limits_{k=1}^{K}\frac{\langle\mathbf{D}_t^{\mu_k}r_kv\rangle_{t,[0,T^*]}^{(\mu^*)}}{\Gamma^2(1+x^*)},\qquad\quad\text{if
}\quad \mathbf{D}_{t}^{II}v(0)\neq 0.
\end{cases}
\]
\end{lemma}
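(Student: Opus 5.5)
The idea is to reduce Lemma \ref{l3.1} to Corollary \ref{c3.3} with $\theta=\mu_0$. For that we need a representation $v(t)-v(0)=C_1^*t^{\mu_0}/\Gamma(1+\mu_0)+w_1(t)$ (or the same with $v$ replaced by $r_0v$) together with a bound $|t^{-\mu_0}w_1|\le C_2^*t^{\nu^*}$. We then identify $C_1^*=\mathbf{D}_t^{I}v(0)/r_0(0)$, respectively $C_1^*=\mathbf{D}_t^{II}v(0)$, and $C_2^*=C_3^*$. With these choices the threshold \eqref{3.5} becomes exactly $t_2^*(\varepsilon^*)$, with $\varepsilon_3^*=\varepsilon_4^*$ and $\theta^*=\nu^*$.

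\textbf{Type I.} Since $v$ is continuous and $\mathbf{D}_t^{\mu_0}v$ is H\"older continuous, the inversion formula for the regularized Caputo derivative gives
\[
v(t)-v(0)=\frac{1}{\Gamma(\mu_0)}\int_0^t(t-\tau)^{\mu_0-1}\mathbf{D}_\tau^{\mu_0}v(\tau)\,d\tau
=\frac{t^{\mu_0}\mathbf{D}_t^{\mu_0}v(0)}{\Gamma(1+\mu_0)}+\frac{J_{\mu_0}(v,t)}{\Gamma(\mu_0)} .
\]
Evaluating $\mathbf{D}_t^{I}v$ at $t=0$, we note that $\mathbf{D}_t^{\mu_k}v(0)$ need not vanish for $k\ge1$. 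Solving for the leading term gives
\[
\mathbf{D}_t^{\mu_0}v(0)=\frac{1}{r_0(0)}\Big[\mathbf{D}_t^{I}v(0)-\sum_{k\ge1}r_k(0)\mathbf{D}_t^{\mu_k}v(0)\Big].
\]
The sum cannot simply be absorbed, so I plan to exploit the additional structure: $\mathbf{D}_t^{\mu_k}v=I^{\mu_0-\mu_k}\mathbf{D}_t^{\mu_0}v$ for $\mu_k<\mu_0$. Applying the Riemann--Liouville integral of order $\mu_0-\mu_k$ to the bounded function $\mathbf{D}_t^{\mu_0}v$ shows that it vanishes at $t=0$ and obeys
\[
|\mathbf{D}_t^{\mu_k}v(t)|\le \frac{t^{\mu_0-\mu_k}\,\|\mathbf{D}_t^{\mu_0}v\|_{\C}}{\Gamma(1+\mu_0-\mu_k)} .
\]
Hence $\mathbf{D}_t^{\mu_k}v(0)=0$, and therefore $C_1^*=\mathbf{D}_t^{I}v(0)/r_0(0)$.

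For the remainder, the H\"older bound gives
\[
|J_{\mu_0}(v,t)|\le\langle\mathbf{D}^{\mu_0}v\rangle^{(\mu^*)}\,\frac{t^{\mu_0+\mu^*}}{\mu_0},
\]
so $|t^{-\mu_0}w_1|\le C\,t^{\mu^*}$. The pieces involving $\mu_0-\mu_k$ enter where we estimate $\mathbf{D}^{\mu_0}v(0)$ against $\mathbf{D}^Iv(0)/r_0(0)$ uniformly in $t$. In that step we use $\mathbf{D}^I v(t)$ near $0$ and write $r_k(t)\mathbf{D}^{\mu_k}v(t)$, which produces terms of size $t^{\mu_0-\mu_k}$. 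All gamma factors are bounded below via $\Gamma(1+x)\ge\Gamma(1+x^*)$, which is what produces the constants in $C_3^*$. Since $t<1$, every power $t^{\beta}$ with $\beta\ge\nu^*$ is bounded by $t^{\nu^*}$.

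\textbf{Type II.} Here the same argument is applied to $r_0v$, using $\mathbf{D}_t^{\mu_0}(r_0v)$. By the same Riemann--Liouville argument, the terms $\mathbf{D}^{\mu_k}(r_kv)$ with $k\ge1$ are $O(t^{\mu_0-\mu_k})$ in sup norm times $\|\mathbf{D}^{\mu_0}(r_kv)\|_{\C}$, which matches the form of $C_3^*$ in the type II case. We again obtain $C_1^*=\mathbf{D}_t^{II}v(0)$, so the first threshold term in $t_2^*$ involves $|\mathbf{D}^{II}v(0)|$ without division by $r_0(0)$.

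\textbf{Main obstacle.} The hardest step is justifying $\mathbf{D}^{\mu_k}v=I^{\mu_0-\mu_k}\mathbf{D}^{\mu_0}v$, i.e.\ the composition rule for regularized Caputo derivatives under only the H\"older regularity assumed in (ii). Once this is settled, we collect the constants into $C_3^*$ and invoke Corollary \ref{c3.3}, which completes the argument.
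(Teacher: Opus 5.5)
Your proposal is correct, but it takes a different route from the paper. The paper never shows that the lower-order derivatives vanish at $t=0$. Instead it imports the multi-term identity of \cite[Lemma 4.1]{PSV2},
\begin{align*}
[v(t)-v(0)]r_0(0)\Gamma(1+\mu_0)&=t^{\mu_0}\mathbf{D}_t^{I}v(0)+\mu_0r_0(0)J_{\mu_0}(v,t)+\sum_{k=1}^{K}\mu_kr_k(0)t^{\mu_0-\mu_k}J_{\mu_k}(v,t)\\
&\quad-[v(t)-v(0)]\sum_{k=1}^{K}r_k(0)t^{\mu_0-\mu_k}\Gamma(1+\mu_k).
\end{align*}
It puts everything except the first term into $w_1$ and bounds the $J_{\mu_k}$ pieces and $v(t)-v(0)$ separately. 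That is where the exponents $\mu_0-\mu_k$ in $\nu^*$ and the extra summands of $C_3^*$ come from.

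You instead prove $\mathbf{D}_t^{\mu_k}v=I^{\mu_0-\mu_k}\mathbf{D}_t^{\mu_0}v$. This gives $\mathbf{D}_t^{\mu_k}v(0)=0$, so $\mathbf{D}_t^{\mu_0}v(0)=\mathbf{D}_t^{I}v(0)/r_0(0)$ holds exactly. The single-term inversion formula then leaves only $w_1=J_{\mu_0}(v,t)/\Gamma(\mu_0)$, with $|t^{-\mu_0}w_1|\le \langle\mathbf{D}_t^{\mu_0}v\rangle^{(\mu^*)}_{t,[0,T^*]}t^{\mu^*}/\Gamma(1+x^*)\le C_3^*t^{\nu^*}$.

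Once $\mathbf{D}_t^{\mu_k}v(0)=0$, one has $\mu_kJ_{\mu_k}(v,t)=\Gamma(1+\mu_k)[v(t)-v(0)]$. So the last two sums in the paper's identity cancel exactly, and the paper's separate estimates are cruder than needed. Your route is more elementary and gives a bound with exponent $\mu^*$ and a smaller constant, so the stated $t_2^*(\varepsilon^*)$ follows a fortiori. The paper's route simply relies on an identity already available in the literature and needs no vanishing argument.

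A few remarks on your write-up:

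\begin{itemize}
\item Your ``main obstacle'' is not really one. The composition rule needs only continuity of $\mathbf{D}_t^{\mu_0}v$: from $v-v(0)=I^{\mu_0}\mathbf{D}_t^{\mu_0}v$ you get $I^{1-\mu_k}(v-v(0))=I^{1+\mu_0-\mu_k}\mathbf{D}_t^{\mu_0}v$, and then you differentiate.
\item Drop the paragraph about estimating $\mathbf{D}_t^{\mu_0}v(0)$ against $\mathbf{D}_t^{I}v(0)/r_0(0)$ ``uniformly in $t$'' through $t^{\mu_0-\mu_k}$ terms. It is superfluous, since that relation is an exact identity in your argument.
\item In the type II case you use continuity of $\mathbf{D}_t^{\mu_0}(r_kv)$ and H\"older continuity of $\mathbf{D}_t^{\mu_0}(r_0v)$. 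These are not in (ii), but the lemma's own $C_3^*$ presupposes them. They also follow from $r_k\in\C^1$ via the product formula used in Lemma \ref{l7.1}, so you are on the same footing as the paper there.
\end{itemize}
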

\begin{proof}
In this proof, assuming $\mathbf{D}_{t}^{I}v(0)\neq 0,$  we obtain
the first estimate stated in this claim, being the second  verified
with the analogous arguments. On this route, the key tool  is
Corollary \ref{c3.3}. Indeed, under assumptions (i)-(ii),
\cite[Lemma 4.1]{PSV2} provides the following asymptotic
\begin{align*}
[v(t)-v(0)]r_0(0)\Gamma(1+\mu_0)&=t^{\mu_0}\mathbf{D}_{t}^{I}v(0)+\mu_0r_0(0)J_{\mu_0}(v,t)+\sum_{k=1}^{K}\mu_kr_k(0)t^{\mu_0-\mu_k}J_{\mu_k}(v,t)\\
&-[v(t)-v(0)]\sum_{k=1}^{K}r_{k}(0)t^{\mu_0-\mu_k}\Gamma(1+\mu_k),
\end{align*}
if $\mathbf{D}_{t}^{I}v(0)\neq 0.$ Hence, accounting this
representation and properties of the functions $v$ and $r_k$, we
utilize Corollary \ref{c3.3} with
\begin{align*}
C_1^*&=\frac{\mathbf{D}_{t}^{I}v(0)}{r_0(0)},\quad
\theta=\mu_0,\qquad w(t)=v(t),\\
w_1(t)&=\frac{\mu_0J_{\mu_0}(v,t)}{\Gamma(1+\mu_0)}+\sum_{k=1}^{K}\frac{\mu_k
r_k(0)
t^{\mu_0-\mu_k}}{r_0(0)\Gamma(1+\mu_0)}J_{\mu_k}(v,t)-\frac{[v(t)-v(0)]}{r_0(0)\Gamma(1+\mu_0)}\sum_{k=1}^{K}r_k(0)t^{\mu_0-\mu_k}\Gamma(1+\mu_k).
\end{align*}
The latter  completes immediately  the proof of this lemma, only if
 the introduced  $w_1$ meets the
requirements of Corollary \ref{c3.3}. Appealing to the regularity of
$v$ and performing the direct computations, we have
\begin{align*}
|J_{\mu_0}(v,t)|&\leq
t^{\mu_0+\mu^*}\langle\mathbf{D}_{t}^{\mu_0}v\rangle_{t,[0,T^*]}^{(\mu^*)}\int_{0}^{1}(1-z)^{\mu_0-1}z^{\mu^*}dz=
t^{\mu_0+\mu^*}\langle\mathbf{D}_{t}^{\mu_0}v\rangle_{t,[0,T^*]}^{(\mu^*)}\frac{\Gamma(\mu_0)\Gamma(1+\mu^*)}{\Gamma(1+\mu_0+\mu^*)},\\
|J_{\mu_k}(v,t)|&\leq
t^{\mu_k+\mu^*}\langle\mathbf{D}_{t}^{\mu_k}v\rangle_{t,[0,T^*]}^{(\mu^*)}\frac{\Gamma(\mu_k)\Gamma(1+\mu^*)}{\Gamma(1+\mu_k+\mu^*)},\\
|v(t)-v(0)|&\leq\frac{t^{\mu_0}}{\Gamma(1+\mu_0)}\|
\mathbf{D}_{t}^{\mu_0}v\|_{\C([0,T^*])}.
\end{align*}
At last, exploiting these inequalities to handle the terms in the
representation of $w_1$, we end up with the chain of the bounds
\begin{align*}
t^{-\mu_0}|w_1(t)|&\leq
\sum_{k=1}^{K}\frac{|r_k(0)|}{r_{0}(0)\Gamma^2(1+x^*)}[t^{\mu^*}\langle\mathbf{D}_{t}^{\mu_k}v\rangle_{t,[0,T^*]}^{(\mu^*)}
+t^{\mu_0-\mu_k}\|\mathbf{D}_{t}^{\mu_0}v\|_{\C([0,T^*])}]\\
&+\frac{t^{\mu^*}}{\Gamma(1+x^*)}\langle\mathbf{D}_{t}^{\mu_0}v\rangle_{t,[0,T^*]}^{(\mu^*)}
 \leq t^{\nu^*}C_3^{*},
\end{align*}
which in turn provides the desired results.
\end{proof}
\begin{remark}\label{r3.1}
The proof of Lemma \ref{l3.1} tells us that the positive constant
$C_3^*$ in  $t_2^*(\varepsilon)$ can be modified if there is a
positive constant $C_4^{*}$ such that
\[
\|\mathbf{D}_{t}^{\mu_0}v\|_{\C^{\mu^*}([0,T^*])}+\sum_{k=1}^{K}\|\mathbf{D}_{t}^{\mu_k}v\|_{\C^{\mu^*}([0,T^*])}\leq
C_{4}^*.
\]
Namely, in this case, appealing to \cite[Lemma 5.3, Remark
5.1]{SV1}, we obtain the estimate
\[
\sum_{k=0}^{K}\|\mathbf{D}_t^{\mu_k}(r_kv)\|_{C^{\mu^*}([0,T^*])}\leq
C_4^*C_5^*\sum_{k=0}^{K}\|r_{k}\|_{\C^{1}([0,T^*])}
\]
with the positive value $C_{5}^{*}$ being independent of $\mu_{k}$.

As a result, accounting this inequality in Lemma \ref{l3.1} allows
us to set
\[
C_3^*=C_{4}^{*}\cdot\begin{cases}
1+2\sum\limits_{k=1}^{K}\frac{|r_k(0)|}{r_0(0)\Gamma^2(1+x^*)},
\qquad\qquad\qquad\text{if}\quad \mathbf{D}_{t}^{I}v(0)\neq 0,\\
\\
C_5^*\Big[1+\frac{2}{\Gamma(1+x^*)}\Big]\sum\limits_{k=0}^{K}\|r_{k}\|_{\C^{1}([0,T^*])}
,\qquad\text{if }\quad \mathbf{D}_{t}^{II}v(0)\neq 0.
\end{cases}
\]
\end{remark}
The next two  claims deal with  the convolution of a smooth function
$f=f(t):\C[0,T^{*}]\to\R$, with  weak singular kernels. The first
convolution is introduced as
\begin{equation}\label{g.1}
\mathcal{G}_f(t):=(\mathcal{G}f)(t,\gamma_3,n)=\int\limits_{0}^{1}(1-z)^{\gamma_3-1}nE_{\gamma_3,\gamma_3}(-nt^{\gamma_3}(1-z)^{\gamma_3})f(zt)dz,
\end{equation}
where  $\gamma_3\in(0,1),$ $n\in\mathbb{N}$,  and
$E_{\gamma_3,\gamma_3}(z)$ is the two-parametric Mittag-Leffler
function defined as
\begin{equation}\label{3.6}
E_{\theta_1,\theta_2}(z)=\sum_{k=0}^{+\infty}\frac{z^k}{\Gamma(\theta_1k+\theta_2)},\quad
\theta_1,\theta_2>0,\quad z\in\R.
\end{equation}
It is worth noting that this convolution can be also considered as
the integral operator defined on the space of continuous function.
We notice that the main properties of this operator including
compactness, description of the spectrum, nondecreasing and some
integral inequalities associated with this operator are described in
\cite{DJ}.
\begin{lemma}\label{l3.2}
Let given positive $T^*<1$ and $\gamma_3\in(0,1),$
 $n\in\mathbb{N}$, and let
$f\in\C^{\gamma_4}([0,T^*])$ with some $\gamma_4\in(0,\gamma_3)$,
$f(0)\neq 0$. Then for any $\lambda,\varepsilon_{5}^{*}\in(0,1)$ and
each $\varepsilon^*\in(0,1-\lambda^{\varepsilon_{5}^{*}})$, the
estimate
\[
\Big|\log_{\lambda}\Big|\frac{\mathcal{G}_{f}(\lambda
t)}{\mathcal{G}_{f}(t)}\Big|\Big|\leq \varepsilon_{5}^{*}
\]
holds for each $t\in[0,t_3^*]$, where
\begin{align*}
t_3^*&:=
t_3^*(\varepsilon^*)=\min\Big\{T^*,(2n)^{-1/\gamma_3},\Big[\frac{C_6^*(\varepsilon^*)}{1+nC_6^*(\varepsilon^*)}\Big]^{1/\gamma_4}\Big\}\quad\text{with}\\
C_{6}^*(\varepsilon^*)&=\frac{\Gamma(1+x^*)|f(0)|\varepsilon^*}{3\Gamma(\gamma_4)[\langle
f\rangle_{t,[0,T^*]}^{(\gamma_4)}+n|f(0)|]}.\end{align*}
\end{lemma}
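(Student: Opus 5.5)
The plan is to write $\mathcal{G}_f(t)=f(0)\,\mathcal{G}_1(t)+\mathcal{G}_{f-f(0)}(t)$ and to show that the ratio $\mathcal{G}_f(\lambda t)/\mathcal{G}_f(t)$ has the form $(1+F_1(t))/(1+F_2(t))$ with $|F_i|$ small. Then Corollary \ref{c3.2} (equivalently Corollary \ref{c3.1}) converts this smallness into the bound on $\log_\lambda$.

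\emph{Leading term.} The first step is to compute it explicitly. By the series \eqref{3.6} and the Beta integral,
\[
\mathcal{G}_1(t)=n\int_0^1(1-z)^{\gamma_3-1}E_{\gamma_3,\gamma_3}(-nt^{\gamma_3}(1-z)^{\gamma_3})dz=n\,E_{\gamma_3,\gamma_3+1}(-nt^{\gamma_3}).
\]
For $nt^{\gamma_3}\le 1/2$, i.e. $t\le(2n)^{-1/\gamma_3}$, we estimate the alternating tail to get
\[
|E_{\gamma_3,\gamma_3+1}(-nt^{\gamma_3})-1/\Gamma(1+\gamma_3)|\le nt^{\gamma_3}\big/\Gamma(1+x^*).
\]
A simpler route is to use that $E_{\gamma_3,\gamma_3}(-s)$ is completely monotone with $0<E_{\gamma_3,\gamma_3}(-s)\le 1/\Gamma(\gamma_3)$. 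Then
\[
|\mathcal{G}_1(t)-n/\Gamma(1+\gamma_3)|\le n\cdot nt^{\gamma_3}\cdot \text{const}.
\]
In either case we get $\mathcal{G}_1(t)=\frac{n}{\Gamma(1+\gamma_3)}(1+h(t))$ with $|h(t)|\lesssim nt^{\gamma_3}$.

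\emph{Remainder.} Next, the Hölder continuity of $f$ gives $|f(zt)-f(0)|\le\langle f\rangle^{(\gamma_4)}z^{\gamma_4}t^{\gamma_4}$. Together with $|E_{\gamma_3,\gamma_3}|\le 1/\Gamma(\gamma_3)$ this yields
\[
|\mathcal{G}_{f-f(0)}(t)|\le \frac{n\,\langle f\rangle^{(\gamma_4)}t^{\gamma_4}}{\Gamma(\gamma_3)}\,B(\gamma_3,1+\gamma_4)\le \frac{n\langle f\rangle^{(\gamma_4)}t^{\gamma_4}\Gamma(1+\gamma_4)}{\Gamma(\gamma_3+\gamma_4+1)} .
\]
Normalising by $nf(0)/\Gamma(1+\gamma_3)$ and using $\Gamma(1+\cdot)\ge\Gamma(1+x^*)$, we write
\[
\mathcal{G}_f(t)=\frac{nf(0)}{\Gamma(1+\gamma_3)}\,(1+F(t)),
\]
where $|F(t)|\le \frac{\Gamma(\gamma_4)}{\Gamma(1+x^*)|f(0)|}\big[\langle f\rangle^{(\gamma_4)}+n|f(0)|\big]t^{\gamma_4}$. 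Here I use $t^{\gamma_3}\le t^{\gamma_4}$ since $\gamma_4<\gamma_3$ and $t<1$. Constants are absorbed so that this takes the form $|F(t)|\le \varepsilon^* t^{\gamma_4}/(3C_6^*(\varepsilon^*))$.

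\emph{Conclusion.} It remains to turn this into the ratio estimate. We have
\[
\frac{\mathcal{G}_f(\lambda t)}{\mathcal{G}_f(t)}=\frac{1+F(\lambda t)}{1+F(t)}=1+\frac{F(\lambda t)-F(t)}{1+F(t)} .
\]
The condition $t^{\gamma_4}\le C_6^*/(1+nC_6^*)$ is chosen precisely so that $|F(t)|,|F(\lambda t)|\le\varepsilon^*/3$ (the extra $n$ in the denominator controls the $nt^{\gamma_3}$ contribution). The perturbation is then bounded by $(2\varepsilon^*/3)/(1-\varepsilon^*/3)\le\varepsilon^*$. Applying Corollary \ref{c3.2} with $\varepsilon^*<1-\lambda^{\varepsilon_5^*}$ gives $|\log_\lambda|\cdot||\le\varepsilon_5^*$. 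Strictly, the lower side $\ln(1-\varepsilon^*)\ge\varepsilon_5^*\ln\lambda$ is exactly what the condition on $\varepsilon^*$ secures, and the upper side $\ln(1+\varepsilon^*)\le|\ln(1-\varepsilon^*)|$ follows automatically.

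The main obstacle I expect is bookkeeping the explicit constants so that they match $C_6^*$ exactly, in particular the factor $3$, the use of $\Gamma(\gamma_4)$, and the fact that the Mittag-Leffler correction only needs $t\le(2n)^{-1/\gamma_3}$. If the constants do not match exactly, I would accept a slightly different admissible constant of the same structure.
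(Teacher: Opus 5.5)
Your proposal is correct and follows essentially the same route as the paper. You normalize by $\mathcal{G}_f(0)=nf(0)/\Gamma(1+\gamma_3)$, split off $f(0)$ and the Mittag-Leffler correction to get the lower bound $1-\varepsilon^*/3$ on the denominator and a $2\varepsilon^*/3$ bound on the difference, conclude $2\varepsilon^*/(3-\varepsilon^*)<\varepsilon^*$, and invoke Corollary \ref{c3.2}. The one difference is that you bound $E_{\gamma_3,\gamma_3}(-s)\le 1/\Gamma(\gamma_3)$ by complete monotonicity rather than by \eqref{3.8*}. So in your argument $t^{\gamma_4}\le C_6^*$ already suffices, and the factor $1+nC_6^*$ is slack: in the paper it absorbs the $(1-nt^{\gamma_3})^{-1}$ loss coming from \eqref{3.8*}, not the $nt^{\gamma_3}$ term as your parenthetical says.
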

\begin{proof}
First, we rewrite the
 desired estimate  in more suitable form
\begin{equation*}\label{3.7}
\Big|\log_{\lambda}\Big|1+\frac{\mathcal{G}_{f}(\lambda
t)-\mathcal{G}_{f}(t)}{\mathcal{G}_{f}(t)}\Big|\Big|<\varepsilon_5^*,
\end{equation*}
which tells us that  the proof of Lemma \ref{l3.2} is a
straightforward employing Corollary \ref{c3.2} with
\[
F(t):=\frac{\mathcal{G}_{f}(\lambda
t)-\mathcal{G}_{f}(t)}{\mathcal{G}_{f}(t)},\qquad t_2^*:=t_3^*\qquad
\text{and}\quad \varepsilon_2^*:=\varepsilon_5^*.
\]
Indeed, to achieve this, we are left to examine the following
statements:

\noindent(i) $ F(0)=\frac{\mathcal{G}_{f}(\lambda
t)-\mathcal{G}_{f}(t)}{\mathcal{G}_{f}(t)}|_{t=0}=0,$

\noindent(ii) for  any $\lambda,\varepsilon_5^*\in(0,1)$ and each
$\varepsilon^*\in(0,1-\lambda^{\varepsilon_5^*}),$ $F$ is continuous
on $[0,t_3^*],$

\noindent(iii) for  any $\lambda,\varepsilon_5^*\in(0,1)$ and each
$\varepsilon^*\in(0,1-\lambda^{\varepsilon_5^*})$, there is the
bound
\begin{equation}\label{3.8}
|F(t)|<\varepsilon^*\quad \text{for any}\quad t\in[0,t_3^*].
\end{equation}
We note that the point (i) in these statements provides the desired
estimate in this lemma if $t=0$.

At this point, we  verify each statement in  (i)-(iii), separately.

\noindent$\bullet$ As for (i), exploiting  the easily verified
relations
\[
E_{\gamma_3,\gamma_3}(0)=\frac{1}{\Gamma(\gamma_3)}\quad
\text{and}\quad nt^{\gamma_3}(1-z)<1\quad \text{for}\quad
z\in(0,1),\quad t\in[0,n^{-1/\gamma_3})\, \text{ and any fixed }\,
n\in\mathbb{N},
\]
and appealing to nonvanishing $f(0)$, we arrive at the relations
\begin{equation*}\label{3.8**}
\mathcal{G}_{f}(\lambda
t)|_{t=0}=\mathcal{G}_{f}(t)|_{t=0}=\frac{nf(0)}{\Gamma(1+\gamma_3)}\neq
0,
\end{equation*}
which in turn end up  with the equality stated in (i).

\noindent$\bullet$ Coming to (ii), the regularity of $f$ along with
the properties of the Mittag-Leffler function tell that
$\mathcal{G}_{f}(t)$ and $\mathcal{G}_{f}(\lambda
t)-\mathcal{G}_{f}(t)$ are continuous if
$t\in[0,\min\{T^*,(2n)^{-1/\gamma_3}\}]$. The latter means the
continuity of $F(t)$ on $[0,t_3^*]$ if only
\[
\mathcal{G}_{f}(t)\neq 0\quad\text{for all}\quad t\in[0,t_3^*].
\]
To examine this inequality, employing the easily verified equality
\[
E_{\gamma_3,\gamma_3}(z)-E_{\gamma_3,\gamma_3}(0)=z
E_{\gamma_3,2\gamma_3}(z)\quad \forall \quad z\in\R,
\]
we rewrite $\mathcal{G}_{f}(t)$ in more appropriate form for the
further analysis
\begin{align*}
\mathcal{G}_{f}(t)&=\int\limits_{0}^{1}n
z^{\gamma_3-1}E_{\gamma_3,\gamma_3}(-nt^{\gamma_3}z^{\gamma_3})[f(t(1-z))-f(0)]dz\\
&-f(0)n^2 t^{\gamma_3}\int\limits_0^1
z^{2\gamma_3-1}E_{\gamma_3,2\gamma_3}(-nt^{\gamma_3}z^{\gamma_3})dz+\frac{nf(0)}{\Gamma(1+\gamma_3)}.
\end{align*}
Then, appealing to the completely monotonicity of
$E_{\theta_1,\theta_2}(-z)$ for $z\geq 0,$ $\theta_1\in[0,1],$
$\theta_2\geq \theta_1$, and  employing the straightforward verified
inequality
\begin{equation}\label{3.8*}
E_{\theta_1,\theta_2}(-z)\leq
\Gamma^{-1}(1+x^{*})(1-z)^{-1}\quad\text{for any}\quad z\in[0,1),
\end{equation}
we arrive at the chain of inequalities
\begin{align*}\label{3.9}\notag
|\mathcal{G}_{f}(t)|&\geq \frac{n|f(0)|}{\Gamma(1+\gamma_3)}\Big|1-n
t^{\gamma_3}\Gamma(1+\gamma_3)\int\limits_0^{1}z^{2\gamma_3-1}E_{\gamma_3,2\gamma_3}(-nt^{\gamma_3}z^{\gamma_3})dz
\\\notag &
-\frac{t^{\gamma_4}\Gamma(1+\gamma_3)\langle
f\rangle_{t,[0,T^*]}^{(\gamma_4)}}{|f(0)|}\int\limits_0^{1}z^{\gamma_3-1}(1-z)^{\gamma_4}E_{\gamma_3,\gamma_3}(-nt^{\gamma_3}z^{\gamma_3})dz
\Big|\\ \notag &=
\frac{n|f(0)|}{\Gamma(1+\gamma_3)}\Big|1-\frac{t^{\gamma_4}\Gamma(1+\gamma_3)\Gamma(\gamma_3)\langle
f\rangle_{t,[0,T^*]}^{(\gamma_4)}}{|f(0)|}E_{\gamma_3,\gamma_4+\gamma_3}(-nt^{\gamma_3})\\
\notag & -n
t^{\gamma_3}\Gamma(1+\gamma_3)E_{\gamma_3,1+2\gamma_3}(-nt^{\gamma_3})
\Big|\\
& \geq \frac{n|f(0)|}{\Gamma(1+\gamma_3)}
\Big|1-\frac{t^{\gamma_4}\Gamma(\gamma_4)}{(1-nt^{\gamma_3})\Gamma(1+x^*)}
\Big[ \frac{\langle
f\rangle_{t,[0,T^*]}^{(\gamma_4)}}{|f(0)|}+n\Big]\Big|,
\end{align*}
whenever
\begin{equation}\label{3.9*}
\begin{cases}
t\leq \min\{T^{*},(2n)^{-1/\gamma_3}\},\\
\\
\frac{t^{\gamma_4}}{1-nt^{\gamma_3}}\leq
\frac{3C_6^{*}(\varepsilon^*)}{\varepsilon^*}.
\end{cases}
\end{equation}
Here, to compute the integrals, we used formulas (4.4.4)-(4.4.5) in
\cite{GKMR}.

As for \eqref{3.9*}, bearing in mind the restriction on $\gamma_4$
and $T^*$, we perform the direct calculations and arrive at
\eqref{3.9*} if $t\in[0,t_3^*]$ for any
$\lambda,\varepsilon_5^*\in(0,1)$ and each
$\varepsilon^*\in(0,1-\lambda^{\varepsilon_5^*})$. In conclusion,
taking into account the restriction on $\varepsilon^*,$ we have
\begin{equation}\label{3.10}
|\mathcal{G}_{f}(t)|\geq
\frac{n|f(0)|}{\Gamma(1+\gamma_3)}[1-\varepsilon^*/3]>\frac{n|f(0)|[2+\lambda^{\varepsilon_5^*}]}{3\Gamma(1+\gamma_3)},
\end{equation}
if $t\in[0,t_3^*]$. Hence, \eqref{3.10} completes the verification
of (ii).

\noindent$\bullet$ Coming to the bound \eqref{3.8} and accounting
the arguments of the previous step, we rewrite the difference
$[\mathcal{G}_{f}(\lambda t)-\mathcal{G}_{f}(t)]$ in the form
\[
\mathcal{G}_{f}(\lambda t)-\mathcal{G}_{f}(t)=g_1(\lambda
t)-g_1(t)+g_2(t)
\]
with
\begin{align*}
g_1(t)&=n\int\limits_{0}^{1}
z^{\gamma_3-1}E_{\gamma_3,\gamma_3}(-nt^{\gamma_3}z^{\gamma_3})[f(t(1-z))-f(0)]dz,\\
g_2(t)&=f(0) n\int\limits_{0}^{1}
z^{\gamma_3-1}[E_{\gamma_3,\gamma_3}(-n\lambda^{\gamma_3}t^{\gamma_3}z^{\gamma_3})-
E_{\gamma_3,\gamma_3}(-nt^{\gamma_3}z^{\gamma_3})]dz.
\end{align*}
Next,  the regularity of $f(t)$ along with \eqref{3.8*} and formula
(4.4.5) in \cite{GKMR}   arrive at the estimates
\begin{align}\label{3.12}\notag
|g_1(t)|+|g_1(\lambda t)|&\leq n t^{\gamma_4}\Gamma(\gamma_3)\langle
f\rangle_{t,[0,T^*]}^{(\gamma_4)}[E_{\gamma_3,1+\gamma_3+\gamma_4}(-nt^{\gamma_3})
+\lambda^{\gamma_4}E_{\gamma_3,1+\gamma_3+\gamma_4}(-n\lambda^{\gamma_3}t^{\gamma_3})]\\\notag
& \leq\frac{n t^{\gamma_4}\Gamma(\gamma_4)\langle
f\rangle_{t,[0,T^*]}^{(\gamma_4)}}{\Gamma(1+x^*)}\Big[\frac{\lambda^{\gamma_4}}{1-n(\lambda
t)^{\gamma_3}}+\frac{1}{1-n t^{\gamma_3}}\Big]\\
\notag & \leq \frac{2n t^{\gamma_4}\Gamma(\gamma_4)\langle
f\rangle_{t,[0,T^*]}^{(\gamma_4)}}{(1-nt^{\gamma_3})\Gamma(1+x^*)},
\end{align}
if $t\leq \min\{T^*,(2n)^{-1/\gamma_3}\}$.

As for $g_2(t),$ we use (4.4.4) in \cite{GKMR} and the properties of
the Mittag-Leffler functions (see \eqref{3.6} and \eqref{3.8*}) and
end up with the inequalities
\begin{align*}
|g_2(t)|&\leq
n|f(0)||E_{\gamma_3,1+\gamma_3}(-n\lambda^{\gamma_3}t^{\gamma_3})-E_{\gamma_3,1+\gamma_3}(-nt^{\gamma_3})|\\
&
=n|f(0)|\Big|\sum\limits_{k=1}^{\infty}\frac{(-nt^{\gamma_3})^{k}(\lambda^{\gamma_3
k}-1)}{\Gamma(1+\gamma_3+k\gamma_3)}\Big|,
\end{align*}
which in turn entail
\begin{equation*}\label{3.13}
|g_2(t)|\leq \frac{2n^2
t^{\gamma_3}|f(0)|}{\Gamma(1+x^*)(1-nt^{\gamma_3})}.
\end{equation*}
Finally, collecting all estimates of $g_i$ yields
\begin{equation}\label{3.14}
|\mathcal{G}_{f}(\lambda t)-\mathcal{G}_{f}(t)|\leq
\frac{2n\varepsilon^*
t^{\gamma_4}|f(0)|}{3C_6^*(1-nt^{\gamma_3})}\leq
\frac{2n\varepsilon^*|f(0)|}{3}
\end{equation}
whenever $t\leq t_3^{*}$.

At last, letting $t\in[0,t^*_3]$ and exploiting \eqref{3.10} and
\eqref{3.14}, we end up with the desired bound
\[
|F(t)|=\frac{|\mathcal{G}_{f}(\lambda
t)-\mathcal{G}_{f}(t)|}{|\mathcal{G}_{f}(t)|}\leq
\frac{2\varepsilon^*}{3-\varepsilon^*}<\varepsilon^*.
\]
It is worth noting that the last inequality holds only if
$\varepsilon^*\in(0,1)$, the latter is performed automatically due
to the assumptions on the parameters  $\lambda$ and
$\varepsilon^*_5$.

Summing up, we conclude that all the assumptions  on $F$ required in
Corollary \ref{c3.2} are satisfied, which completes the proof of
Lemma \ref{l3.2}.
\end{proof}
\begin{remark}\label{r3.0}
The proof of Lemma \ref{l3.2} tells us that  $f$ may depend not only
$t$ but  also on $n\in\mathbb{N}$ as a parameter. Namely, if
$f(0,n)\neq 0$ and $f\in C^{\gamma_4}([0,T^*])$ for each fixed $n\in
\mathbb{N}$, then the results of Lemma \ref{l3.2} hold with
$f=f(t,n)$.
\end{remark}
Next, we aim to get the similar results to the convolution in more
general form
\begin{equation}\label{g.2}
G_{f}(t):=G(t;k,f)=\int\limits_{0}^{1}(1-z)^{\gamma^*-1}k(t-zt)f(zt)dz
\end{equation}
with given continuous functions $f$, $k$.
\begin{lemma}\label{l3.3}
Let $T^*\in(0,1),$ $\gamma^*\in(0,1)$ and let
$k\in\C^{\gamma_{3}}([0,T^*]),$ $f\in\C^{\gamma_4}([0,T^*]),$
$\gamma_3,\gamma_4\in(0,1)$. If $k(0)\neq 0$ and $f(0)\neq 0$, then
for any $\lambda,\varepsilon_6^*\in(0,1)$ and each
$\varepsilon^*\in(0,1-\lambda^{\varepsilon_6^{*}}),$ the following
estimate
\[
\Big|\log_{\lambda}\Big|\frac{G_f(\lambda
t)}{G_f(t)}\Big|\Big|<\varepsilon_6^*
\]
holds for any $t\in[0,t_4^*]$ with
$t_4^*:=t_4^*(\varepsilon^*)=\min\{T^*,(C_7^{*})^{1/\bar{\gamma}}\}$,
where
\[
\bar{\gamma}=\min\{\gamma_3,\gamma_4\}\qquad\text{and}\qquad
C_{7}^{*}=\frac{\varepsilon^*|f(0)||k(0)|\Gamma(1+x^*)}{3[|f(0)|\langle
k\rangle_{t,[0,T^*]}^{(\gamma_3)}+\langle
f\rangle_{t,[0,T^*]}^{(\gamma_4)}\|k\|_{\C([0,T^*])}]}.
\]
\end{lemma}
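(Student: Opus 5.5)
We follow the scheme of Lemma \ref{l3.2}: we rewrite the quantity to be estimated as
\[
\log_{\lambda}\Big|1+F(t)\Big|,\qquad F(t):=\frac{G_f(\lambda t)-G_f(t)}{G_f(t)},
\]
and then apply Corollary \ref{c3.2} with $\varepsilon_2^*:=\varepsilon_6^*$ and $t_2^*:=t_4^*$. This requires three facts: $F(0)=0$; $F$ is continuous on $[0,t_4^*]$, which amounts to $G_f$ not vanishing there; and $|F(t)|<\varepsilon^*$ on $[0,t_4^*]$. Since $\varepsilon^*<1-\lambda^{\varepsilon_6^*}$, the corollary then gives the claim.

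\emph{Reference value of $G_f$.} Direct substitution gives
\[
G_f(0)=k(0)f(0)\int_0^1(1-z)^{\gamma^*-1}dz=\frac{k(0)f(0)}{\gamma^*}\neq 0 .
\]
This holds both at $t$ and at $\lambda t$ when $t=0$, so $F(0)=0$. Continuity of $G_f$ follows from the continuity of $k$ and $f$ together with the integrability of $(1-z)^{\gamma^*-1}$, by dominated convergence.

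\emph{Decomposition.} Write
\[
k(t-zt)f(zt)-k(0)f(0)=[k(t(1-z))-k(0)]f(zt)+k(0)[f(zt)-f(0)],
\]
so that $G_f(t)=k(0)f(0)/\gamma^*+h(t)$ with
\[
|h(t)|\le \langle k\rangle^{(\gamma_3)}\|f\|_{\C}\,t^{\gamma_3}\int_0^1(1-z)^{\gamma^*-1+\gamma_3}dz+|k(0)|\langle f\rangle^{(\gamma_4)}t^{\gamma_4}\int_0^1(1-z)^{\gamma^*-1}z^{\gamma_4}dz .
\]
The Beta integrals are bounded using $1/\gamma^*=\Gamma(\gamma^*)/\Gamma(1+\gamma^*)$ and $\Gamma(1+\cdot)\ge\Gamma(1+x^*)$; this is where the factor $\Gamma(1+x^*)$ in $C_7^*$ comes from. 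Since $T^*<1$, we have $t^{\gamma_3},t^{\gamma_4}\le t^{\bar\gamma}$.

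To obtain exactly the constant in $C_7^*$, which involves $|f(0)|\langle k\rangle$ rather than $\|f\|_{\C}\langle k\rangle$, we switch the roles in the splitting to
\[
[k(t(1-z))-k(0)]f(0)+k(t(1-z))[f(zt)-f(0)],
\]
which brings in $\|k\|_{\C}$ and $|f(0)|$ as in $C_7^*$. This yields
\[
|h(t)|\le \frac{t^{\bar\gamma}}{\gamma^*\Gamma(1+x^*)}\Big[|f(0)|\langle k\rangle^{(\gamma_3)}+\|k\|_{\C}\langle f\rangle^{(\gamma_4)}\Big],
\]
possibly up to harmless absolute factors that we absorb.

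\emph{Conclusion.} For $t\le t_4^*$ this gives $|h(t)|\le \varepsilon^*|k(0)f(0)|/(3\gamma^*)$, and also $|h(\lambda t)|\le$ the same bound, since $\lambda t\le t$. Hence
\[
|G_f(t)|\ge \frac{|k(0)f(0)|}{\gamma^*}\big(1-\varepsilon^*/3\big)>0,
\qquad
|G_f(\lambda t)-G_f(t)|\le |h(\lambda t)|+|h(t)|\le \frac{2\varepsilon^*|k(0)f(0)|}{3\gamma^*},
\]
and therefore
\[
|F(t)|\le \frac{2\varepsilon^*}{3-\varepsilon^*}<\varepsilon^*.
\]

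The main obstacle is the bookkeeping of the Beta-function constants, so that they are dominated by $1/(\gamma^*\Gamma(1+x^*))$ uniformly in $\gamma^*,\gamma_3,\gamma_4$ and match $C_7^*$. For this we use $B(\gamma^*,1+\beta)=\Gamma(\gamma^*)\Gamma(1+\beta)/\Gamma(1+\gamma^*+\beta)\le \Gamma(\gamma^*)/\Gamma(1+x^*)$, noting $\Gamma(1+\beta)\le1$ for $\beta\in(0,1)$.
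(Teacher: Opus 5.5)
Your proposal is correct and follows the paper's proof essentially step by step: the same reduction to Corollary \ref{c3.2} with $F=(G_f(\lambda t)-G_f(t))/G_f(t)$, the same splitting $[k(t(1-z))-k(0)]f(0)+k(t(1-z))[f(zt)-f(0)]$ for the lower bound on $|G_f|$, and the same final ratio $2\varepsilon^*/(3-\varepsilon^*)$. The one small difference is that the paper splits $G_f(\lambda t)-G_f(t)$ as $\bar g_1(\lambda t)-\bar g_1(t)+\bar g_2(t)$, while you use $|h(\lambda t)|+|h(t)|$, which gives the identical bound. Drop the hedge ``up to harmless absolute factors'': since $C_7^*$ is fixed, such factors could not be absorbed, and they do not arise anyway, because your Beta-function bookkeeping together with $1/(\gamma^*+\gamma_3)\le 1/\gamma^*\le 1/(\gamma^*\Gamma(1+x^*))$ gives the bound on $|h(t)|$ exactly.
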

\begin{proof}
Arguing similarly to the proof of the previous claim, we conclude
that Lemma \ref{l3.3} follows immediately from  Corollary \ref{c3.2}
if we let
\[
F(t):=\frac{G_f(\lambda t)-G_f(t)}{G_f(t)}, \quad
t_2^*:=t_4^*\quad\text{and}\quad \varepsilon_2^*:=\varepsilon_6^*,
\]
where $F$ meets the requirements:

\noindent(i) $\frac{G_f(\lambda t)-G_f(t)}{G_f(t)}|_{t=0}=0,$

\noindent(ii) for any fixed $\lambda,\varepsilon_6^*\in(0,1)$ and
each $\varepsilon^*\in(0,1-\lambda^{\varepsilon_6^*}),$ the
introduced above function $F$ is continuous on $[0,t_4^*]$ and,
besides,
\begin{equation}\label{3.15}
\frac{|G_f(\lambda t)-G_f(t)|}{|G_f(t)|}<\varepsilon^*\qquad
\text{for all}\quad t\in[0,t_4^*].
\end{equation}
It is apparent that, the regularity of $k$ and $f$ along with the
inequalities $f(0)\neq 0,$ $k(0)\neq 0$ provide the continuity of
$G_f(t)$ and $G_{f}(\lambda t)$ for each $t\in[0,T^*]$ and
$\lambda\in(0,1)$, and, besides,
\[
G_f(0)=\frac{k(0)f(0)}{\gamma^*}\neq 0.
\]

Thus, we are left to verify inequalities (3.15) and $G_{f}(t)\neq 0$
for any $t\in[0,t_4^*]$. Assuming  $t\in[0,t_4^*]$ and performing
the straightforward computations, we arrive at the chain of the
inequalities
\begin{align*}
|G_f(t)|&\geq
\frac{|k(0)f(0)|}{\gamma^*}\Big[1-\frac{\gamma^*}{|k(0)|}\int\limits_{0}^1
z^{\gamma^*-1}|k(zt)-k(0)|dz-\frac{\gamma^*}{|k(0)||f(0)|}\int\limits_{0}^1
z^{\gamma^*-1}|k(tz)||f(t-tz)-f(0)|dz \Big]\\
& \geq \frac{|k(0)f(0)|}{\gamma^*} \Big| 1- \frac{\gamma^*
t^{\gamma_3}\langle
k\rangle^{(\gamma_3)}_{t,[0,T^*]}}{|k(0)|(\gamma^*+\gamma_3)}
 -
\frac{\Gamma(1+\gamma^*)\Gamma(1+\gamma_4) t^{\gamma_4}\langle
f\rangle^{(\gamma_4)}_{t,[0,T^*]}\|k\|_{\C([0,T^*])}}{|k(0)||f(0)|\Gamma(1+\gamma^*+\gamma_4)}
 \Big|\\&
 \geq
\frac{|k(0)f(0)|}{\gamma^*}\Big|1-\frac{t^{\bar{\gamma}}}{|f(0)||k(0)|\Gamma(1+x^*)}[\langle
k\rangle^{(\gamma_3)}_{t,[0,T^*]}|f(0)|+\langle
f\rangle^{(\gamma_4)}_{t,[0,T^*]}\|k\|_{\C([0,T^*])}]\Big|.
\end{align*}
Thanks to $ t^{\bar{\gamma}}\leq C_7^* $ and
$\varepsilon^*<1-\lambda^{\varepsilon_6^{*}}$, we can continue the
evaluation of $G_f(t)$  and write
\begin{equation}\label{3.16}
|G_f(t)|\geq \frac{|k(0)f(0)|}{3\gamma^*}|3-\varepsilon^*|>
\frac{|k(0)f(0)|}{3\gamma^*}(2+\lambda^{\varepsilon_6^*}),
\end{equation}
which in turn means that $G_f(t)\neq 0$ for all $t\in[0,t_4^*]$.

In fine, we are left to verify \eqref{3.15}. Setting
\[
\bar{g}_1(t)=\int\limits_0^1
z^{\gamma^*-1}k(tz)[f(t-tz)-f(0)]dz,\quad
\bar{g}_2(t)=f(0)\int\limits_0^1 z^{\gamma^*-1}[k(\lambda
tz)-k(tz)]dz,
\]
we rewrite the numerator in the left-hand side of \eqref{3.15} in
the form
\[
|G_f(\lambda t)-G_f(t)|=|\bar{g}_1(\lambda
t)-\bar{g}_1(t)+\bar{g}_2(t)|.
\]
Then, appealing to the regularity of $f$ and $k$ and performing the
technical calculations, we end up with the estimates for each
$t\in[0,T^*]$:
\begin{align*}
|\bar{g}_1(t)|&\leq \langle
f\rangle_{t,[0,T^*]}^{(\gamma_4)}\|k\|_{\C([0,T^*])}t^{\gamma_4}\int\limits_{0}^{1}z^{\gamma^*-1}(1-z)^{\gamma_4}dz\\
&\leq \frac{\Gamma(\gamma^*)\langle
f\rangle_{t,[0,T^*]}^{(\gamma_4)}\|k\|_{\C([0,T^*])}}{\Gamma(1+x^*)}t^{\bar{\gamma}},\\
|\bar{g}_2(t)|&\leq\frac{|f(0)| \langle
k\rangle_{t,[0,T^*]}^{(\gamma_3)}t^{\bar{\gamma}}}{\gamma^*},
\end{align*}
which in turn entail
\begin{align*}\label{3.18}\notag
|G_f(\lambda t)-G_f(t)|&\leq \frac{2
t^{\bar{\gamma}}}{\gamma^*\Gamma(1+x^*)}(\langle
f\rangle_{t,[0,T^*]}^{(\gamma_4)}\|k\|_{\C([0,T^*])}+|f(0)|\langle
k\rangle_{t,[0,T^*]}^{(\gamma_3)})\\
& =\frac{2\varepsilon^* t^{\bar{\gamma}}|f(0)k(0)|}{3\gamma^*
C_7^*}\leq \frac{2|f(0)k(0)|\varepsilon^*}{3\gamma^* }
\end{align*}
for all $t\leq t_4^*$.

At last, collecting  the last estimate with \eqref{3.16} and letting
$t\leq t_4^*,$ we deduce that
\[
\frac{|G_f(\lambda t)-G_f(t)|}{|G_f(t)|}\leq
\frac{2\varepsilon^*}{3-\varepsilon^*}<\varepsilon^* ,
\]
 where the last inequality holds only if $\varepsilon^*<1$ that is
 provided by restrictions on $\lambda$ and $\varepsilon^*_{6}$.
 Thus, this finishes the proof of Lemma \ref{l3.3}.
\end{proof}


\section{Proof of Theorem \ref{t2.3}}
\label{s4}

\noindent First, we prove this theorem in simpler case dealing with
only  the reconstruction of $\nu_1$ (i.e. assuming that the
remaining parameters and data in \eqref{i.1}-\eqref{i.3} are given).
After that, we demonstrate how the results getting in the particular
case can be extended to the general one.

\subsection{Theorem \ref{t2.3} in  a simpler case: the reconstruction  of
$\nu_1$ only}\label{s4.1}

In this case, we focus on the following inverse problem:

\textit{For the given  right-hand sides in \eqref{i.2}, \eqref{i.3},
the memory kernel $\mathcal{K}$; the coefficients in the operators
$\mathbf{D}_t,$ $\mathcal{L}_i,$ $\mathcal{N}$ and orders $\nu_i,$
$i=2,...,M,$ the inverse problem consists in the reconstruction of
the couple $(\nu_1,u)$ such that}
\begin{equation}\label{4.1}
\nu_1\in\Big(\frac{2\nu_2}{2-\alpha},1\Big),\quad \text{if}\quad
M\geq 2,\qquad\text{while}\quad \nu_1\in(0,1),\quad\text{if}\quad
M=1,
\end{equation}
\textit{and $u$ solves (in a classical sense) of problem
\eqref{i.1}-\eqref{i.3} and satisfies \eqref{i.4} for all
$t\in[0,t^*]$.}

Obviously, exploiting \cite[Lemma 3.1]{HPV} and recasting the
arguments of \cite[Section 4]{HPV} with $\mathcal{K}$ being given,
we end up with the solvability of this inverse problem.
\begin{lemma}\label{l4.1}
Let $T>0$ be finite and  assumptions h1-h6 and \eqref{4.1} hold, and
let $\c_{\nu,0}\neq 0$. Then the inverse problem
\eqref{i.1}-\eqref{i.4} has a unique solution $(\nu_1,u)$ such that
$\nu_1$ is computed by \eqref{i.13} and
$u\in\C^{2+\alpha,\frac{2+\alpha}{2}\nu_1}(\bar{\Omega}_{T})$ is a
unique global classical solution of the direct problem
\eqref{i.1}-\eqref{i.3}, which satisfies the observation \eqref{i.4}
for $t\in[0,t^*]$. Moreover, \eqref{2.1}-\eqref{2.2*} hold.
\end{lemma}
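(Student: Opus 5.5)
The plan is to split Lemma \ref{l4.1} into a direct-problem part and an inverse (recovery-formula) part. The direct problem is treated first: for a fixed $\nu_1$ satisfying \eqref{4.1}, with all other data given, we invoke the global classical solvability of \eqref{i.1}-\eqref{i.3} in $\C^{2+\alpha,\frac{2+\alpha}{2}\nu_1}(\bar{\Omega}_T)$. This result is available under h1-h6 from \cite{PSV1,SV,V1}, and from \cite[Lemma 3.1]{HPV} when the memory term is present. It yields the a priori bound \eqref{2.1}, with $C_0$ depending on $T$, $|\Omega|$, $\|\mathcal{K}\|_{L^1(0,T)}$ and the coefficient norms. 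Because $\mathcal{K}$ is now given, none of the kernel-dependent constants involve unknowns.

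Next we derive \eqref{2.2}-\eqref{2.2*}. We integrate equation \eqref{i.2} over $\Omega$ and apply the divergence theorem together with the Neumann condition \eqref{i.3}. The second-order parts turn into boundary integrals of $\mathcal{N}u$, which reduce to $\mathcal{I}(t)$ and $\delta(\mathcal{K}*\mathcal{I})$. The zero-order terms produce $a_0\psi$ and $\mathcal{K}*b_0\psi$. Since the coefficients $\rho_i$ depend only on $t$, the operator $\mathbf{D}_t$ commutes with $\int_\Omega$. The result is $\mathbf{D}_t\int_\Omega u\,dx=\c_\nu(t)$. The Hölder regularity of $\int_\Omega u$ and of $\mathbf{D}_t^{\nu_i}\int_\Omega u$ on $[0,T_0]$ then follows from \eqref{2.1} by integrating in $x$. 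Restricting to $T_0=t^*$ and using \eqref{i.4} gives \eqref{2.2*} for $\psi$.

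For the inverse part we apply Lemma \ref{l3.1} with $v=\psi$, $\mu_0=\nu_1$, $\mu_k=\nu_{k+1}$ and $r_k=\rho_{k+1}$. Hypothesis (ii) of that lemma is supplied by \eqref{2.2*} with $\mu^*=\alpha\nu_1/2$. For the I type operator we have $\mathbf{D}^I_t\psi(0)=\c_{\nu,0}\neq0$, and for the II type the analogous identity holds. Letting $\varepsilon_4^*\to0$ shows that the limit in \eqref{i.13} exists and equals $\nu_1$, so $\nu_1$ is determined by the observation alone.

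Uniqueness follows from this. If $(\nu_1,u)$ and $(\tilde\nu_1,\tilde u)$ are two solutions producing the same $\psi$, then \eqref{i.13} gives $\nu_1=\tilde\nu_1$. Uniqueness of the solution of the direct problem then gives $u=\tilde u$. Existence is obtained by taking $\nu_1$ to be the actual order and $u$ the corresponding direct solution.

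The main obstacle is the condition $\mu^*\le\mu_0-\mu_k$, which is needed for $w_1$ in Corollary \ref{c3.3} to be genuinely of lower order. Concretely, we need $\nu^*=\min\{\alpha\nu_1/2,\nu_1-\nu_2\}>0$ together with a clean asymptotic for $\psi(t)-\psi(0)$. The restriction $\nu_1>2\nu_2/(2-\alpha)$ in \eqref{4.1} is exactly $\nu_1-\nu_2>\alpha\nu_1/2$, and this is what makes the remainder estimates in \cite[Lemma 4.1]{PSV2} work. I expect this is also why \eqref{4.1} is imposed.
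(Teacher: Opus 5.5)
Your proposal is correct and is essentially the paper's argument made explicit. The paper only invokes \cite[Lemma~3.1]{HPV} and the reasoning of \cite[Section~4]{HPV} with $\mathcal{K}$ fixed, and your chain unpacks that citation using the tools of Section~\ref{s3}: direct solvability with \eqref{2.1}, integration over $\Omega$ to get $\mathbf{D}_t\psi=\c_\nu$, Lemma~\ref{l3.1} with $v=\psi$ and $\mu^*=\alpha\nu_1/2$ to obtain \eqref{i.13}, and uniqueness through the direct problem.

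One correction to your last paragraph. Lemma~\ref{l3.1} only needs $\nu^*=\min\{\mu^*,\mu_0-\mu_1,\ldots,\mu_0-\mu_K\}>0$, which is automatic, so $\nu_1>2\nu_2/(2-\alpha)$ is not what makes $w_1$ lower order. Its real role is upstream, in the direct problem: since generically $\mathbf{D}_t^{\nu_2}\psi\sim t^{\nu_1-\nu_2}$ near $t=0$, the condition is necessary for $\mathbf{D}_t^{\nu_i}u\in\C^{\alpha,\frac{\alpha\nu_1}{2}}(\bar{\Omega}_T)$ in \eqref{2.1}, and hence for hypothesis (ii) of Lemma~\ref{l3.1} to hold with $\mu^*=\alpha\nu_1/2$.
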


At this point, based on Lemmas \ref{l4.1} and \ref{l3.1} and Remark
\ref{r3.1}, we evaluate the time interval for which estimate
\eqref{2.6.1} is true in the case of the aforementioned  inverse
problem. Namely, we can use Lemma \ref{l3.1} and then Remark
\ref{r3.1} with
\[
v=\psi,\quad r_i=\rho_{i+1}, \quad T^*=t^*,\quad
\mu_i=\nu_{i+1},\quad i=0,...,M-1,
\]
and
\[
C_4^*=\frac{C_0\mathcal{R}(g,u_0,\varphi)}{\Gamma(1+x^*)},\quad
C_5^*=C_1+C_2 ,\quad\varepsilon_4^*=\varepsilon^*=\varepsilon_{I},
\]
where $C_i$, $i=0,1,2,$ are defined in \eqref{2.1} and Corollary
\ref{c2.1}, while $\mathcal{R}(g,u_0,\varphi)$ is given by
\eqref{2.0}. Thus, performing the straightforward technical
calculations we claim.
\begin{lemma}\label{l4.2}
Let assumptions of Lemma \ref{l4.1} hold and $\nu_{1,a}$ be computed
by \eqref{2.3} for each $t_a\in(0,t^*]$. Then, for each
$\varepsilon_{I}\in(0,1),$ the bound \eqref{2.6.1} is true  for
every $t_a\in(0,T_{\nu}]$, where
\[
T_{\nu}=\begin{cases}
\min\Big\{T_{I}^{0},\Big(\frac{\varepsilon_{I}|\c_{\nu,0}|}{\rho_1(0)C_4\mathcal{R}}\Big)^{1/\mu^*}\Big\}\quad\text{for
the I type FDO},\\
\min\Big\{T_{I}^{0},\Big(\frac{\varepsilon_{I}|\c_{\nu,0}|}{C_4\mathcal{R}}\Big)^{1/\mu^*}\Big\}\quad\text{for
the II type FDO}
\end{cases}
\quad\text{ with}\quad \mu^*=\begin{cases}
\frac{\nu_2\alpha}{2},\qquad\text{if}\quad M\geq 2,\\
\frac{\nu_1\alpha}{2},\qquad\text{if}\quad M=1,
\end{cases}
\]
and $T_{I}^{0}$ and $C_4$ are defined via \eqref{2.10*} and
\eqref{2.10**}, respectively.
\end{lemma}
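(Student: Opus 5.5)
The argument is a direct application of Lemma \ref{l3.1}, sharpened by Remark \ref{r3.1}, with the choices
\[
v=\psi,\quad r_k=\rho_{k+1},\quad \mu_k=\nu_{k+1},\quad k=0,\dots,M-1,\quad T^*=t^*,\quad \varepsilon^*=\varepsilon_4^*=\varepsilon_I .
\]
The first step is to check hypotheses (i)--(ii) of Lemma \ref{l3.1}.

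For (i), h2 gives $r_0=\rho_1\geq\varrho_3>0$. The $\C^1$ regularity follows from h3, since $\rho_k\in\C^{\nu}$ with $\nu>1$. For (ii), Lemma \ref{l4.1} yields \eqref{2.2*}, so every $\mathbf{D}_t^{\nu_i}\psi$ is H\"older continuous of order $\alpha\nu_1/2$. By h2 we have $\nu_1>\nu_2$, hence $\alpha\nu_1/2>\alpha\nu_2/2$, and all these derivatives lie in $\C^{\mu^*}([0,t^*])$ with $\mu^*=\alpha\nu_2/2$; when $M=1$ we take $\mu^*=\alpha\nu_1/2$ instead. In both cases $\mu^*\leq\mu_0=\nu_1$.

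The nonvanishing condition comes from the identity $\mathbf{D}_t\psi=\c_\nu$ in \eqref{2.2*}. It gives $\mathbf{D}^{I}_t v(0)=\c_{\nu,0}\neq0$ for the I type FDO, and $\mathbf{D}^{II}_tv(0)=\c_{\nu,0}$ for the II type. Lemma \ref{l3.1} then produces exactly the quotients in \eqref{2.3}, and the resulting estimate is \eqref{2.6.1} for every $t_a\leq t_2^*(\varepsilon_I)$.

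The remaining work is to identify $t_2^*$ with $T_\nu$.

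\textbf{Standard terms.} The first four entries in the minimum defining $t_2^*$ are $T^*$, the two powers involving $\Gamma(1+x^*)|\mathbf{D}^{I}_tv(0)|/r_0(0)$ and its reciprocal, and $(1-\varepsilon_I)^{2/\varepsilon_I}$. After substituting $\mathbf{D}^{I}_tv(0)=\c_{\nu,0}$ and $r_0(0)=\rho_1(0)$, these coincide term by term with the entries of $T_I^0$ in \eqref{2.10*}. They match up to rewriting the exponent sign $(\cdot)^{2/\varepsilon}=(\cdot^{-1})^{-2/\varepsilon}$.

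\textbf{Exponent of the last term.} Its exponent is $1/\nu^*$ with $\nu^*=\min\{\mu^*,\nu_1-\nu_2,\dots,\nu_1-\nu_M\}=\min\{\mu^*,\nu_1-\nu_2\}$. Here I would use the lower bound $\nu_1>2\nu_2/(2-\alpha)$ from \eqref{4.1}. It gives $\nu_1-\nu_2>\nu_1\alpha/2>\alpha\nu_2/2$, so $\nu^*=\mu^*$. This is the one place where the restriction on $\nu_1$ is genuinely needed.

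\textbf{Constant of the last term.} Lemma \ref{l3.1}'s own constant $C_3^*$ involves individual H\"older seminorms. Via Remark \ref{r3.1} I would replace it with $C_4^*=C_0\mathcal{R}/\Gamma(1+x^*)$, taking $C_0|\Omega|$ bounds from \eqref{2.1} and Corollary \ref{c2.1}; the $|\Omega|$ factor is absorbed as in the paper's normalisation. For type II I would use $C_5^*=C_1+C_2$ and $\|r_k\|_{\C^1}\leq\|\rho_{k+1}\|_{\C^\nu}$. The resulting $C_3^*$ is then exactly $C_4\mathcal{R}$ with $C_4$ from \eqref{2.10**}. Consequently the last entry $(\varepsilon^*|\c_{\nu,0}|/(r_0(0)C_3^*))^{1/\nu^*}$ becomes the second entry of $T_\nu$, with the factor $\rho_1(0)$ present only in the type I case.

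\textbf{Main obstacle.} The hard part is not conceptual but the bookkeeping of constants. One must check that the bound of Remark \ref{r3.1} really dominates all three groups of terms in $C_3^*$, namely the sup norm of $\mathbf{D}^{\mu_0}_tv$, the seminorms of $\mathbf{D}^{\mu_k}_tv$, and the cross terms $t^{\mu_0-\mu_k}$. For the cross terms this uses $t^{\nu_1-\nu_k}\leq t^{\nu^*}$, valid because $t<1$. With that check done, collecting the minimum gives $T_\nu$ and completes the proof.
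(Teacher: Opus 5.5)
Your proposal is correct and follows the paper's own route: the paper obtains Lemma \ref{l4.2} by applying Lemma \ref{l3.1} and Remark \ref{r3.1} with exactly your substitutions $v=\psi$, $r_k=\rho_{k+1}$, $\mu_k=\nu_{k+1}$, $T^*=t^*$, $C_4^*=C_0\mathcal{R}/\Gamma(1+x^*)$, $C_5^*=C_1+C_2$ and $\varepsilon^*=\varepsilon_4^*=\varepsilon_I$. Your checks of the hypotheses, of $\mathbf{D}_t\psi(0)=\c_{\nu,0}$ via \eqref{2.2*}, and of $\nu^*=\mu^*$ from $\nu_1>2\nu_2/(2-\alpha)$ spell out the ``straightforward technical calculations'' that the paper leaves implicit (for type I, verify $C_3^*=C_4\mathcal{R}$ against the final bound in the proof of Lemma \ref{l3.1} rather than the displayed formula of Remark \ref{r3.1}, which has $\Gamma^2(1+x^*)$ where \eqref{2.10**} has $\Gamma(1+x^*)$).
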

\begin{remark}\label{r4.1}
It is worth noting that the disadvantage of choosing $T_{\nu}$ is
the complexity of explicitly calculating the values $C_{i},i=0,1,2,$
which determine  $C_ 3$. Moreover, in the case of the one-term FDO
(i.e., $M=1$), $\mu^*$ depends on the searched parameter $\nu_1$.
However, in any case, the definition of $T_{\nu}$ suggests that
\[
T_{\nu}\leq T_{I}^{0}.
\]
\end{remark}

\subsection{Completion of the proof of Theorem
\ref{t2.3}}\label{s4.2}
 Obviously, the verification of Theorem \ref{t2.3} in the general
 case, that is for the first and second inverse problems is very
 analogously to arguments leading to Lemma \ref{l4.2}. Namely,
 appealing either to Theorem \ref{t2.1} in the case of the FIP or
to Theorem \ref{t2.2} in the case of the SIP, and then using Lemma
\ref{l3.1} together with Remark \ref{r3.1}, where we set
\[
 \mu^*=\begin{cases}
\frac{\nu_2\alpha}{2},\qquad\text{in the case of the SIP ,}\\
\nu_0,\qquad\qquad\text{in the case of the FIP,}
\end{cases}
\quad \text{and}\quad C_3^*=C_4\mathcal{R},\quad T^*=t^*,\quad
t_2(\varepsilon^*)=T_{I},
\]
we immediately arrive at the relations
\begin{equation*}\label{4.2}
T_I=T_{\nu}\leq T_I^0
\end{equation*}
with $T_\nu$ being defined in Lemma \ref{l4.2}.

Clearly, in the case of the FIP with $M\geq 3,$ assumption h2
together with the restriction on $t^*$ suggest that $\mu^*\geq
\nu_0$ and, accordingly,
\[
T_{\nu}\geq \begin{cases}
\min\Big\{T^0_{I},\Big(\frac{|\c_{\nu,0}|\varepsilon_{I}}{C_4\mathcal{R}\rho_1(0)}\Big)^{1/\nu_0}\Big\}\quad
\text{for the I type FDO,}\\
\min\Big\{T^0_{I},\Big(\frac{|\c_{\nu,0}|\varepsilon_{I}}{C_4\mathcal{R}}\Big)^{1/\nu_0}\Big\},\quad
\text{for the II type FDO.}
\end{cases}
\]
Thus, as $T_{I}$ we can choose the right-hand side of this
inequality.

Coming to the SIP, we first set $\mu^*=\frac{\alpha\nu_2}{2}$ in the
definition of $T_{\nu}$. Next, keeping in mind Remark \ref{r2.2}, we
may select $T_{I}$ in the form \eqref{2.9}, if we obtain two sides
uniformly estimates in $\gamma$ of $\|\mathcal{K}\|_{L^1(0,t_1)}$
with $t_1=\min\{t^*,T_{K}\}$. Indeed, assuming that
$\gamma\in(0,\gamma_0)$ with some fixed $\gamma_0<1$ and appealing
to the smoothness and nonvanishing of the memory kernel
$\mathcal{K}_0(t)$ if $t\in[0,t_1]$, we have the estimates
\begin{equation}\label{4.3}
0<t_1^2 \underset{t\in[0,t_1]}{\min}|\mathcal{K}_0(t)|\leq
\|\mathcal{K}\|_{L^1(0,t_1)}\leq
\frac{1}{1-\gamma_0}\|\mathcal{K}_0\|_{\C([0,t_1^*])},
\end{equation}
which provide the desired results. That completes the proof of
Theorem \ref{t2.3}. \qed


\section{Proof of Theorem \ref{t2.4}}
\label{s5}

\noindent First of all, the arguments of \cite[Sections
3.1-3.2]{HPSV} tell us that the following equality (rewritten here
below in our notation) holds
\[
\nu_{i^*}=\nu_1+\log_{\lambda}\frac{|\mathcal{G}_{\U}(\lambda
t,\nu_1-\nu_{i^*},n^*)|}{|\G_{\U}(t,\nu_1-\nu_{i^*},n^*)|}-\log_{\lambda}\frac{|\F_{\nu}(\lambda
t)|}{|\F_{\nu}(t)|}
\]
for each $t\in[0,t^*]$, $\lambda\in(0,1)$ and $n^*$ ensuring
\eqref{2.16}. Here, the functions $\G_{\U}$ and $\U$ are defined by
\eqref{g.1} and \eqref{2.14}, respectively.

Then, collecting  \eqref{2.4} with this equality, we arrive at the
relation
\begin{equation}\label{5.1}
\nu_{i^*}-\nu_{i^*,a}=\log_{\lambda}\frac{|\mathcal{G}_{\U}(\lambda
t_a,\nu_1-\nu_{i^*},n^*)|}{|\G_{\U}(t_a,\nu_1-\nu_{i^*},n^*)|}+\log_{\lambda}\frac{\lambda^{\nu_1-\nu_{1,a}}|\F_{\nu}(t_a)|}{|\F_{\nu}(\lambda
t_a)|}+ \log_{\lambda}\frac{|\F_{\nu,a}(\lambda
t_a)|}{|\F_{\nu,a}(t_a)|}
\end{equation}
for any $t_a\in(0,t^*].$

In light equality \eqref{5.1} (see also Remark \ref{re.e}), we
easily deduce that Theorem \ref{t2.4} is a simple consequence of the
following bounds for any $\varepsilon_{2,i}\in(0,1),$ $i=1,2,3,$
such that
$\varepsilon_{2,2}\in[\varepsilon_\nu+\varepsilon_{2,1},1)$:
\begin{align}\label{5.2.1}
\Big|\log_{\lambda}\frac{|\mathcal{G}_{\U}(\lambda
t_a,\nu_1-\nu_{i^*},n^*)|}{|\G_{\U}(t_a,\nu_1-\nu_{i^*},n^*)|}\Big|&\leq
\varepsilon_{2,1}\quad\forall\, t_a\in(0,T_{II}^1],\\ \label{5.2.2}
\Big|\log_{\lambda}\frac{\lambda^{\nu_1-\nu_{1,a}}|\F_{\nu}(t_a)|}{|\F_{\nu}(\lambda
t_a)|}\Big|&\leq \varepsilon_{2,2}\quad\forall\, t_a\in(0,T_{II}^2],\\
\label{5.2.3}  \Big|\log_{\lambda}\frac{|\F_{\nu,a}(\lambda
t_a)|}{|\F_{\nu,a}(t_a)|}\Big|&\leq \varepsilon_{2,3}\quad\forall\,
t_a\in(0,T_{II}^3],
\end{align}
for any $\lambda\in(0,1)$ and $T_{II}^{j}\leq t^*$ being specified
below.

\subsection{Estimates \eqref{5.2.1}-\eqref{5.2.2}}\label{s5.1}
We notice that  bound \eqref{5.2.1} follows from Lemmas \ref{l2.1}
and \ref{l3.2}. Indeed, appealing to Lemma \ref{l2.1} and taking
into account nonvanishing $\U(0,n^*)$, we can easily verified that
for
\[
f=\U,\quad\gamma_4=\alpha_0,\quad\gamma_3=\nu_1-\nu_{i^*},\quad
T^{*}=t^*,\quad \varepsilon_5^*=\varepsilon_{2,1},\quad
\varepsilon*=\varepsilon,
\]
 all requirements of
Lemma \ref{l3.2} are satisfied and, hence, we end up with
\eqref{5.2.1} where
\begin{equation}\label{5.4}
T_{II}^1=\min\Big\{t^*,(2n^*)^{1/\alpha_0},\Big(\frac{C_9\varepsilon}{1+n^*
C_9\varepsilon}\Big)^{1/\alpha_0}\Big\}
\end{equation}
with $\varepsilon\in(0,1-\lambda^{\varepsilon_{2,1}})$ and
$\varepsilon_{2,1}\in(0,1).$ At last, we notice that due to
$\nu_0<\frac{\alpha\nu_1}{2}$, we can put $\alpha_0=\nu_0$ in
\eqref{5.4} in the case of $M\geq 3.$

 At this point, coming to the estimate
\eqref{5.2.2} and recasting step-by-step  the arguments of
\cite[Sections 3.1-3.3]{HPSV}, we derive the equality
\begin{equation}\label{5.5}
\F_{\nu}(t)=
t^{\nu_1-\nu_{i^*}}\G_{\U}(t,\nu_1-\nu_{i^*},n^*)\quad\forall\,
t\in[0,t^*].
\end{equation}
Thus, making use this relation along with \eqref{5.2.1}, we arrive
at the desired bound
\begin{align*}
\Big|\log_{\lambda}\frac{\lambda^{\nu_1-\nu_{1,a}}|\F_{\nu}(t_a)|}{|\F_{\nu}(\lambda
t_a)|}\Big|&=\Big|\nu_{i^*}-\nu_{1,a}+
 \log_{\lambda}\frac{|\mathcal{G}_{\U}(\lambda
t_a,\nu_1-\nu_{i^*},n^*)|}{|\G_{\U}(t_a,\nu_1-\nu_{i^*},n^*)|}\Big|\\
&\leq
\varepsilon_{\nu}+\Big|\log_{\lambda}\frac{|\mathcal{G}_{\U}(\lambda
t_a,\nu_1-\nu_{i^*},n^*)|}{|\G_{\U}(t_a,\nu_1-\nu_{i^*},n^*)|}\Big|\\
& \leq \varepsilon_{2,1}+\varepsilon_{\nu} \leq  \varepsilon_{2,2}
\end{align*}
for each $t_a\in(0,T_{II}^2]$ with $ T_{II}^2=T_{II}^1, $ where
$T_{II}^1$ is given by \eqref{5.4} with
$\varepsilon\in(0,1-\lambda^{\varepsilon_{2,1}})$ and
$0<\varepsilon_{2,1}+\varepsilon_{\nu}\leq\varepsilon_{2,2}<1$.

\subsection{Estimate \eqref{5.2.3}}\label{s5.3}
Assuming h7, we will argue similar to the previous step. To this
end, taking into account the representation of $\c_2(t)$ and
$\F_{\nu}(t),$ we rewrite $\F_{\nu,a}$ in more suitable form
\[
\F_{\nu,a}(t)=\F_{\nu}(t)+\c_{2}(t),
\]
which in turn leads to
\begin{equation}\label{5.7}
\F_{\nu,a}(0)=\c_{2}(0)\neq 0.
\end{equation}
Here, we used \eqref{2.15} and \eqref{5.5}.
\begin{proposition}\label{l5.2}
Let assumptions of Theorem \ref{t2.1} along with h7 hold. Then, for
any $\lambda\in(0,1),$ $\varepsilon_{2,3}\in(0,1),$ the estimate
\eqref{5.2.3} holds for each $t_a\in(0,T_{II}^3]$, where
\[
T_{II}^{3}=\min\Big\{t_1^*,\Big(\frac{\varepsilon|\c_{2}(0)|}{3C_8[\mathcal{R}+\mathcal{R}_1]}\Big)^{1/\alpha_2}\Big\},
\]
with each $\varepsilon\in(0,1-\lambda^{\varepsilon_{2,3}})$ and any
$\alpha_2\in(0,\min\{\alpha_1,\alpha\nu_1/2\}]$.
\end{proposition}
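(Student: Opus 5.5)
We will reduce the claim to Corollary \ref{c3.2}, exactly as in the proofs of Lemmas \ref{l3.2} and \ref{l3.3}. We rewrite the quantity in \eqref{5.2.3} as
\[
\log_{\lambda}\frac{|\F_{\nu,a}(\lambda t_a)|}{|\F_{\nu,a}(t_a)|}=\log_{\lambda}\big|1+F(t_a)\big|,\qquad F(t):=\frac{\F_{\nu,a}(\lambda t)-\F_{\nu,a}(t)}{\F_{\nu,a}(t)}.
\]
We then apply Corollary \ref{c3.2} with $T^*=t_1^*$ (note $t_1^*\leq t^*<1$), $\varepsilon_2^*=\varepsilon_{2,3}$ and $\varepsilon^*=\varepsilon\in(0,1-\lambda^{\varepsilon_{2,3}})$. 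It then suffices to check three things: $F(0)=0$; $F$ is continuous on $[0,T_{II}^3]$; and $|F(t)|<\varepsilon$ there.

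The regularity comes from Lemma \ref{l2.1}(iii). It gives $\F_{\nu,a}\in\C^{\alpha_2}([0,t_1^*])$ with
\[
\langle\F_{\nu,a}\rangle^{(\alpha_2)}_{t,[0,t_1^*]}\leq C_8[\mathcal{R}+\mathcal{R}_1].
\]
By \eqref{5.7} we also have $\F_{\nu,a}(0)=\c_2(0)\neq0$. Together these give continuity of the numerator and denominator of $F$, and they give $F(0)=0$.

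For the lower bound on the denominator, we use the H\"older estimate. For $t\leq T_{II}^3$,
\[
|\F_{\nu,a}(t)|\geq|\c_2(0)|-t^{\alpha_2}C_8[\mathcal{R}+\mathcal{R}_1]\geq|\c_2(0)|\Big(1-\frac{\varepsilon}{3}\Big)>0.
\]
Hence $F$ is continuous on $[0,T_{II}^3]$.

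For the numerator, write $\F_{\nu,a}(\lambda t)-\F_{\nu,a}(t)=[\F_{\nu,a}(\lambda t)-\F_{\nu,a}(0)]-[\F_{\nu,a}(t)-\F_{\nu,a}(0)]$. Using $\lambda<1$, this gives
\[
|\F_{\nu,a}(\lambda t)-\F_{\nu,a}(t)|\leq (1+\lambda^{\alpha_2})t^{\alpha_2}C_8[\mathcal{R}+\mathcal{R}_1]\leq \frac{2\varepsilon|\c_2(0)|}{3}.
\]
Dividing the two bounds yields
\[
|F(t)|\leq\frac{2\varepsilon}{3-\varepsilon}<\varepsilon,
\]
since $\varepsilon<1$. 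Corollary \ref{c3.2} then delivers \eqref{5.2.3} for all $t_a\in(0,T_{II}^3]$.

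The only delicate point is bookkeeping. The H\"older exponent $\alpha_2$ must be admissible for both $\c_2$ and $\F_{\nu}$; this is exactly the range $\alpha_2\leq\min\{\alpha_1,\alpha\nu_1/2\}$ from Lemma \ref{l2.1}. In addition, $\F_{\nu,a}$ is only controlled on $[0,t_1^*]$ by h7, which is why $t_1^*$ enters the minimum defining $T_{II}^3$.
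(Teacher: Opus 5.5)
Your proposal is correct and follows the paper's proof essentially step for step. You reduce to Corollary \ref{c3.2} with $F(t)=(\F_{\nu,a}(\lambda t)-\F_{\nu,a}(t))/\F_{\nu,a}(t)$, use Lemma \ref{l2.1}(iii) and \eqref{5.7} to get $F(0)=0$ and $|\F_{\nu,a}(t)|\geq|\c_2(0)|(1-\varepsilon/3)$, and bound the numerator by the H\"older seminorm. The only cosmetic difference is that the paper estimates the numerator directly by $C_8[\mathcal{R}+\mathcal{R}_1](1-\lambda)^{\alpha_2}t^{\alpha_2}$ rather than splitting through $t=0$; this gives a slightly sharper, but equally sufficient, bound.
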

\begin{proof}
Obviously, this claim follows immediately from Corollary \ref{c3.2}
being applied to
\[
F(t)=\frac{\F_{\nu,a}(t_a\lambda)-\F_{\nu,a}(t_a)}{\F_{\nu,a}(t_a)},\quad
T^*=t^*,\quad \varepsilon_2^*=\varepsilon_{2,3},\quad
\varepsilon^*=\varepsilon,
\]
only if this function $F$ meets all requirements of Corollary
\ref{c3.2}.

Clearly, assumption h7 together with the requirements of Theorem
\ref{t2.1} allow us to utilized Lemma \ref{l2.1} and, accounting
\eqref{5.7}, to deduce that
\[
F(0)=0\quad \text{and}\quad
\F_{\nu,a}(t_a\lambda),\,\F_{\nu,a}(t_a)\in\C^{\alpha_2}([0,t_1^*]).
\]
Thus, to complete the proof of this claim, we need to show that
\begin{equation*}\label{5.8}
|\F_{\nu,a}(t_a)|>0 \quad\text{and}\quad \frac{|\F_{\nu,a}(\lambda
t_a)-\F_{\nu,a}(t_a)|}{|\F_{\nu,a}(t_a)|}<\varepsilon
\end{equation*}
for each $t_a\in(0,T_{II}^3]$ and any $\lambda\in(0,1),$
$\varepsilon\in(0,1-\lambda^{\varepsilon_{2,3}})$.

\noindent Exploiting Lemma \ref{l2.1} arrives at
\[
|\F_{\nu,a}(\lambda t_a)-\F_{\nu,a}(t_a)|\leq
C_8(\mathcal{R}_1+\mathcal{R}](1-\lambda)^{\alpha_2}t_a^{\alpha_2}\quad\text{for
each}\quad t_a\in[0,t_{1}^{*}].
\]

Coming to evaluation of $|\F_{\nu,a}(t_a)|$ and accounting
\eqref{5.7}, we have
\begin{align*}
|\F_{\nu,a}(t_a)|&\geq
|\F_{\nu,a}(0)|\Big|1-\frac{|\F_{\nu,a}(\lambda
t_a)-\F_{\nu,a}(0)|}{|\F_{\nu,a}(0)|}\Big|\\& \geq
|\c_2(0)|\Big|1-\frac{C_8[\mathcal{R}+\mathcal{R}_1]t_a^{\alpha_2}}{|\c_2(0)|}\Big|\\&
\geq |\c_2(0)|(1-\varepsilon/3),
\end{align*}
if
\[
t_a\leq\min\Big\{t_1^*,\Big(\frac{\varepsilon|\c_2(0)|}{3C_8[\mathcal{R}+\mathcal{R}_1]}\Big)^{1/\alpha_2}\Big\}.
\]
At last, taking into account the last inequality along with Lemma
\ref{l2.1}, we end up with the desired inequality for any positive
$t_a\leq T_{II}^{3}$. That finishes the verification of Proposition
\ref{l5.2}.\end{proof}
\begin{remark}\label{r5.0}
Since $\alpha_2\leq \min\{\alpha_1,\alpha\nu_1/2\}$, we may set
$\alpha_2=\min\{\alpha_1,\nu_0\}$ in the case of $M\geq 3$.
\end{remark}

\subsection*{Conclusion of the proof of Theorem
\ref{t2.4}}\label{s5.4} It is apparent that, if $\nu_1=\nu_{1,a},$
then equality \eqref{5.1} together with the bound \eqref{5.2.1}
leads to  Theorem \ref{t2.4} with
$\varepsilon_{II}=\varepsilon_{2,1}\in(0,1)$ and $T_{II}=T_{II}^{1}$
(see \eqref{5.4}) with $\alpha_0=\alpha\nu_1/2$.

In general case (i.e. $\nu_1\neq \nu_{1,a}$), assuming \eqref{e.e}
and letting
$\varepsilon_{2,1}=\varepsilon_{2,3}=\varepsilon_{2,2}-\varepsilon_{\nu}=\frac{\varepsilon_{II}}{3}$
in \eqref{5.2.1}-\eqref{5.2.3}, we may select
$T_{II}=\min\{T_{II}^1,T_{II}^{2},T_{II}^{3},T_{I}\}$, which,
obviously, finishes the verification of Theorem \ref{t2.4}. \qed


\section{Proof of Theorem \ref{t2.5}}
\label{s6}

\noindent It is worth noting that \cite[Lemma 3.2]{HPV} along with
arguments of \cite[Section 4.1]{HPV} provide not only formula
\eqref{i.15} (to compute $\gamma$) but also give the equality
(rewritten here below in our notations)
\[
\gamma=1+\log_\mu\frac{|\F_{\gamma}(t)|}{|\F_{\gamma}(\mu
t)|}+\log_{\mu}\frac{|G_{\c_1}(\mu t)|}{|G_{\c_1}(t)|},
\]
which is true for each $\mu\in(0,1)$ and $t\in[0,t^*]$ if only the
conditions of Theorem \ref{t2.2} hold. Here, we recall that $\c_1$
and $G_{\c_1}$ are defined via \eqref{c.1} and \eqref{g.2},
respectively, in the latter we put
\[
k=\mathcal{K}_0 ,\quad \gamma^*=1-\gamma,\quad f=\c_1.
\]
Collecting this equality (at $t=t_a$) with \eqref{2.5}, we end up
with
\[
\gamma-\gamma_a=\log_\mu\frac{|\F_{\gamma}(t_a)|}{|\F_{\gamma}(\mu
t_a)|}+\log_{\mu}\frac{|G_{\c_1}(\mu t_a)|}{|G_{\c_1}(t_a)|} +
\log_\mu\frac{|\F_{\gamma,a}(\mu t_a)|}{|\F_{\gamma,a}(t_a)|} .
\]
This equality  suggests that \eqref{2.6.3} in the case of $\nu_1\neq
\nu_{1,a}$ is provided by the following estimate
\begin{equation*}\label{6.1}
\Big|\log_\mu\frac{|\F_{\gamma}(t_a)|}{|\F_{\gamma}(\mu
t_a)|}\Big|+\Big|\log_{\mu}\frac{|G_{\c_1}(\mu
t_a)|}{|G_{\c_1}(t_a)|}\Big| +\Big| \log_\mu\frac{|\F_{\gamma,a}(\mu
t_a)|}{|\F_{\gamma,a}( t_a)|}\Big| <\varepsilon_{III}
\end{equation*}
for all $t_a\in(0,T_{III}]$ and any $\varepsilon_{III}\in(0,1)$.
Besides, $\nu_1=\nu_{1,a}$, then the bound   means
\begin{equation*}\label{6.2}
\Big|\log_{\mu}\frac{|G_{\c_1}(\mu
t_a)|}{|G_{\c_1}(t_a)|}\Big|<\varepsilon_{III}
\end{equation*}
for all $t_a\in(0,T_{III}]$ and any $\varepsilon_{III}\in(0,1)$.

In order to specify $T_{III}$ in the estimates above, we, first,
find $T_{III}^i,$ $i=1,2,,3,$ such that for each
$\varepsilon_{3,i}\in(0,1),$
$1-\bar{\gamma}+\varepsilon_{3,1}\leq\varepsilon_{3,2}$, the
following inequalities hold
\begin{align}\label{6.3.1}
&\Big|\log_{\mu}\frac{|G_{\c_1}(\mu
t_a)|}{|G_{\c_1}(t_a)|}\Big|<\varepsilon_{3,1}\qquad\forall\,
t_a\in(0,T_{III}^1],\\
\label{6.3.2} &
\Big|\log_\mu\frac{|\F_{\gamma}(t_a)|}{|\F_{\gamma}(\mu
t_a)|}\Big|<\varepsilon_{3,2}\qquad\, \forall\, t_a\in(0,T_{III}^2],\\
\label{6.3.3} &\Big| \log_\mu\frac{|\F_{\gamma,a}(\mu
t_a)|}{|\F_{\gamma,a}( t_a)|}\Big| <\varepsilon_{3,3}\qquad\forall\,
t_a\in(0,T_{III}^3].
\end{align}
At this point, to evaluate each  $T_{III}^{i}$, we analyze each
bound \eqref{6.3.1}-\eqref{6.3.2}, separately.

\subsection{Estimates \eqref{6.3.1}-\eqref{6.3.2}}\label{s6.1}
First, we discuss \eqref{6.3.1} in a simpler case, when $\nu_1$ is
given explicitly, that is
\begin{equation}\label{6.4}
\nu_1=\nu_{1,a}.
\end{equation}
Then, we describe how this restriction can be removed.
\begin{proposition}\label{p6.1}
Let assumption of Theorem \ref{t2.2} along with h8 and \eqref{6.4}
hold. Then, for any $\varepsilon_{3,1},\mu\in(0,1),$ the estimate
\eqref{6.3.1} is true for every $t_a\in(0,T_{\gamma}]$ with
\[
T_{\gamma}=\min\Big\{t^*,\Big(\frac{\varepsilon
|\c_{1,0}||\mathcal{K}_0|\Gamma(1+x^*)}{3[\mathcal{R}_2\|\mathcal{K}_0\|_{\C([0,t^*])}+
|\c_{1,0}|\langle\mathcal{K}_0\rangle_{t,[0,t^*]}^{(\alpha_5)}]}\Big)^{1/\mathrm{a}}\Big\},
\]
where $\mathrm{a}\in(0,\min\{\alpha_5,\alpha/2,\nu_1\}],$ and
$\varepsilon\in(0,1-\mu^{\varepsilon_{3,1}})$.
\end{proposition}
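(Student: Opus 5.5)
Proposition \ref{p6.1} is a direct application of Lemma \ref{l3.3} to the convolution $G_{\c_1}$. Recall that $G_{\c_1}$ is defined by \eqref{g.2} with
\[
k=\mathcal{K}_0,\qquad f=\c_1,\qquad \gamma^*=1-\gamma\in(0,1).
\]
We take $T^*=t^*<1$, $\lambda=\mu$, $\varepsilon_6^*=\varepsilon_{3,1}$ and $\varepsilon^*=\varepsilon\in(0,1-\mu^{\varepsilon_{3,1}})$. Lemma \ref{l3.3} then gives $|\log_\mu|G_{\c_1}(\mu t)/G_{\c_1}(t)||<\varepsilon_{3,1}$ for all $t\in[0,t_4^*]$. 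Moreover, $\log_\mu|a/b|$ and $\log_\mu(|a|/|b|)$ coincide, so this is exactly \eqref{6.3.1}. What remains is to check the hypotheses of Lemma \ref{l3.3} and to show that $t_4^*\ge T_\gamma$.

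\emph{Hypotheses on $k$.} By h8, $k=\mathcal{K}_0\in\C^{\alpha_5}([0,t^*])$, so we may take $\gamma_3=\alpha_5$. Nonvanishing $k(0)=\mathcal{K}_0(0)\neq0$ is assumed in Theorem \ref{t2.2}.

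\emph{Hypotheses on $f$.} We have $f(0)=\c_{1,0}\neq 0$, again by Theorem \ref{t2.2}. For the H\"older regularity we invoke Lemma \ref{l2.2}(i): $\|\c_1\|_{\C^{\alpha_4}([0,t^*])}\le\mathcal{R}_2$ for every $\alpha_4\in(0,\min\{\alpha/2,\nu_1\}]$. Here \eqref{6.4} enters. Since $\nu_1=\nu_{1,a}$, the regularity required in h7 concerns only $\mathbf{D}_t^{\nu_1}\psi$, which is already controlled by Theorem \ref{t2.2} and \eqref{2.2*}; hence no restriction to $[0,t_1^*]$ is needed and we can work on all of $[0,t^*]$. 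Accordingly we take $\gamma_4=\alpha_4$, and in particular $\langle\c_1\rangle^{(\gamma_4)}_{t,[0,t^*]}\le\mathcal{R}_2$.

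\emph{Comparing the times.} By Lemma \ref{l3.3}, $t_4^*=\min\{t^*,(C_7^*)^{1/\bar\gamma}\}$ with $\bar\gamma=\min\{\alpha_5,\alpha_4\}$. Inserting $\langle \c_1\rangle\le\mathcal{R}_2$ into $C_7^*$ only decreases it, giving
\[
C_7^*\ \ge\ \frac{\varepsilon|\c_{1,0}||\mathcal{K}_0(0)|\Gamma(1+x^*)}{3\big[\mathcal{R}_2\|\mathcal{K}_0\|_{\C([0,t^*])}+|\c_{1,0}|\langle\mathcal{K}_0\rangle^{(\alpha_5)}_{t,[0,t^*]}\big]},
\]
which is exactly the quantity raised to the power $1/\mathrm{a}$ in $T_\gamma$ (the $|\mathcal{K}_0|$ there is $|\mathcal{K}_0(0)|$). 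For a general $\mathrm{a}\le\min\{\alpha_5,\alpha/2,\nu_1\}$ we choose $\alpha_4=\min\{\alpha/2,\nu_1\}$, so that $\bar\gamma=\min\{\alpha_5,\alpha/2,\nu_1\}\ge\mathrm{a}$, and use that a H\"older seminorm of a lower exponent is controlled on $[0,t^*]$ with $t^*<1$. Alternatively, we rerun the estimates in the proof of Lemma \ref{l3.3} with $t^{\gamma_3},t^{\gamma_4}\le t^{\mathrm{a}}$, valid since $t<1$.

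\emph{Main obstacle.} The step needing care is this exponent bookkeeping. One must confirm that the base of the power is at most $1$; this holds because $\varepsilon<1$, $\Gamma(1+x^*)<1$, and $|\c_{1,0}|\le\mathcal{R}_2$, $|\mathcal{K}_0(0)|\le\|\mathcal{K}_0\|_{\C([0,t^*])}$. Then lowering the exponent from $\bar\gamma$ to $\mathrm{a}$ shrinks rather than enlarges the time, i.e. $(C_7^*)^{1/\mathrm{a}}\le(C_7^*)^{1/\bar\gamma}$. Granting this, $T_\gamma\le t_4^*$, and \eqref{6.3.1} holds on $(0,T_\gamma]$.
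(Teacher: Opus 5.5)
Your proposal is correct and is essentially the paper's proof: apply Lemma \ref{l3.3} with $k=\mathcal{K}_0$, $f=\c_1$, $\gamma^*=1-\gamma$, $\lambda=\mu$ and $T^*=t^*$, taking the H\"older regularity from h8 and Lemma \ref{l2.2}(i). Your comparison of $t_4^*$ with $T_\gamma$, which the paper leaves implicit, is sound provided you write the last step as $Q^{1/\mathrm{a}}\le Q^{1/\bar\gamma}\le (C_7^*)^{1/\bar\gamma}$, where $Q\le 1$ is the base in $T_\gamma$, because $C_7^*$ itself is built from seminorms and need not be at most $1$; also, \eqref{6.4} is not really used in the estimate itself, since $\c_1$ does not involve $\nu_{1,a}$.
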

\begin{proof}
The verification of this claim is simple usage of Lemma \ref{l3.3}
with
\[
k=\mathcal{K}_0,\quad f=\c_1,\quad\lambda=\mu,\quad
T=t^*,\quad\gamma^*=1-\gamma,\quad
\varepsilon_6^*=\varepsilon_{3,1},\quad \varepsilon^*=\varepsilon.
\]
To this end, we just need to specify $\gamma_3$ and $\gamma_4$ such
that $\mathcal{K}_0\in\C^{\gamma_3}([0,t^*])$ and
$\c_{1}\in\C^{\gamma_4}([0,t^*])$. Obviously, assumption h8 along
with Lemma \ref{l2.2} ensure
$\mathcal{K}_0\in\C^{\alpha_5}([0,t^*])$ and
$\c_1\in\C^{\alpha_4}([0,t^*])$. Hence, all requirements of Lemma
\ref{l3.3} are satisfied and, accordingly, this lemma arrives at the
desired bound \eqref{6.3.1} with $T_{III}^1=T_{\gamma}$.
\end{proof}

It is worth noting that the quantity $T_\gamma$ is depending on
$\nu_1$ (see the definition of $\mathrm{a}$). This fact tells us
that if $\nu_1$ is unknown parameter, we have to get rid of this
dependence. Clearly, it is possible if $\D_t$ is at least a two-term
fractional differential operator. Indeed, in this case, appealing to
h2, we conclude that
\[
\min\{\alpha_5,\alpha/2,\nu_1\}>\min\{\alpha_5,\alpha/2,2\nu_2/(2-\alpha)\}.
\]
Next, thanks to $t^*<1$, we can use Proposition \ref{p6.1} without
restriction \eqref{6.4} and end up with the following claim.
\begin{lemma}\label{l6.1}
Let assumptions of Theorem \ref{t2.2} and h8 hold. Then for any
$\varepsilon_{3,1},\mu\in(0,1),$ the bound \eqref{6.3.1} is true for
each $t_a\in(0,T_{III}^{1}]$, where $T_{III}^{1}\leq t^*$ and,
besides, in the case of $M\geq 2,$ there is
\[
T_{III}^1=T_{\gamma}
\]
with $\mathrm{a}=\min\{\alpha_5,\alpha/2,2\nu_2/(2-\alpha)\}.$
\end{lemma}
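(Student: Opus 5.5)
The plan is to rerun the proof of Proposition \ref{p6.1} without using \eqref{6.4}. The only place where $\nu_1$ enters $T_\gamma$ is the Hölder exponent $\mathrm{a}$ of $\c_1$, i.e.\ the exponent $\gamma_4$ used when Lemma \ref{l3.3} is applied with $k=\mathcal{K}_0$, $f=\c_1$, $\gamma^*=1-\gamma$, $\lambda=\mu$, $\varepsilon_6^*=\varepsilon_{3,1}$. For the general statement, $T^{1}_{III}\leq t^*$ follows directly from Lemma \ref{l3.3}, applied with $\gamma_3=\alpha_5$ (from h8) and $\gamma_4=\alpha_4$ (from Lemma \ref{l2.2}(i)). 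This gives a $t_4^*=\min\{t^*,(C_7^*)^{1/\bar\gamma}\}\le t^*$.

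For $M\geq 2$ the goal is to replace $\bar\gamma$ by an exponent that does not involve $\nu_1$. By h2, $\nu_1>2\nu_2/(2-\alpha)$, so
\[
\mathrm{a}:=\min\{\alpha_5,\alpha/2,2\nu_2/(2-\alpha)\}\leq\min\{\alpha_5,\alpha/2,\nu_1\}.
\]
The key step is to check that Lemma \ref{l2.2}(i) applies with $\alpha_4=\mathrm{a}$, which is admissible since $\mathrm{a}\le\min\{\alpha/2,\nu_1\}$. Then $\|\c_1\|_{\C^{\mathrm{a}}([0,t^*])}\le\mathcal{R}_2$ and $\mathcal{K}_0\in\C^{\mathrm{a}}([0,t^*])$, the latter because $\mathrm{a}\le\alpha_5$ and $t^*<1$. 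The Hölder seminorm bound for the lower exponent is controlled by the higher one because $t^*<1$: $|h(t)-h(s)|\le\langle h\rangle^{(\beta)}|t-s|^{\beta}\le \langle h\rangle^{(\beta)}|t-s|^{\mathrm{a}}$ for $\mathrm{a}\le\beta$ and $|t-s|<1$. So we may run Lemma \ref{l3.3} with $\gamma_3=\gamma_4=\mathrm{a}$. That gives $\bar\gamma=\mathrm{a}$, and the constant $C_7^*$ can be bounded from below using $\langle\c_1\rangle^{(\mathrm{a})}\le\mathcal{R}_2$ and $\langle\mathcal{K}_0\rangle^{(\mathrm{a})}\le\langle\mathcal{K}_0\rangle^{(\alpha_5)}$. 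This yields exactly the argument of $T_\gamma$. We also need $\c_1(0)=\c_{1,0}\neq0$ and $\mathcal{K}_0(0)\ne0$, and both are assumed in Theorem \ref{t2.2}.

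Finally, Lemma \ref{l3.3} gives
\[
|\log_\mu|G_{\c_1}(\mu t_a)/G_{\c_1}(t_a)||<\varepsilon_{3,1}
\]
for $t_a\le\min\{t^*,(C_7^*)^{1/\mathrm{a}}\}=T_\gamma$, with $\varepsilon\in(0,1-\mu^{\varepsilon_{3,1}})$, which is \eqref{6.3.1}. The main obstacle I expect is the Hölder-exponent bookkeeping. We must make sure that lowering the exponent to $\mathrm{a}$ does not enlarge the seminorms appearing in $C_7^*$ beyond the quantities used in $T_\gamma$; this is where $t^*<1$ is essential. We must also make sure that the bound on $\c_1$ in Lemma \ref{l2.2}(i) holds uniformly, without hidden dependence on $\nu_1$ through $C_0$, which is handled as in Remark \ref{r2.2} via \eqref{4.3}.
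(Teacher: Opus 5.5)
Your proposal is correct and follows the paper's own route. You rerun Proposition \ref{p6.1} (i.e.\ Lemma \ref{l3.3} with $k=\mathcal{K}_0$, $f=\c_1$, $\gamma^*=1-\gamma$), use h2 to get $\mathrm{a}\le\min\{\alpha_5,\alpha/2,\nu_1\}$, and use $t^*<1$ to lower the H\"older exponents, which you spell out where the paper only cites it; your non-strict inequality is also more accurate than the strict one the paper writes. Your closing concern about $C_0$ is not needed for the statement as posed: $T_\gamma$ simply keeps $\mathcal{R}_2$, and \eqref{4.3} concerns the $\gamma$-dependence of $\|\mathcal{K}\|_{L^1}$, not any dependence on $\nu_1$.
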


Coming to estimate \eqref{6.3.2}, we notice that  the key tool in
the verification of this bound is the following equality established
in \cite[Section 4]{HPV} (see (4.2),(4.3) and Lemma 3.2 therein)
\[
\F_{\gamma}(\mu t)=(\mu
t)^{1-\gamma}\int_{0}^{1}z^{-\gamma}\mathcal{K}_0(\mu tz)\c_1(\mu
t(1-z))dz
\]
for all $\mu\in[0,1],$ $t\in[0,t^*]$.

 Indeed, exploiting this
relation, we arrive at the relations
\begin{align*}
\Big|\log_{\mu}\frac{|\F_{\gamma}(t)|}{|\F_{\gamma}(\mu
t)|}\Big|&=\Big|\gamma-1+\log_{\mu}\frac{\int_{0}^{1}z^{-\gamma}\mathcal{K}_0(tz)\c_1(t(1-z))dz}
{\int_{0}^{1}z^{-\gamma}\mathcal{K}_0(\mu tz)\c_1(\mu
t(1-z))dz}\Big|\\
&=\Big|\gamma-1+\log_{\mu}\frac{|G_{\c_1}(t)|}{|G_{\c_1}(\mu
t)|}\Big|\leq |1-\bar{\gamma}|+\varepsilon_{3,1}.
\end{align*}
Thanks to the restriction on $\varepsilon_{3,1}$ and
$\varepsilon_{3,2}$, the last inequality leads to the
 \eqref{6.3.2} with $T_{III}^{2}=T_{III}^{1}$.

\subsection{Estimate \eqref{6.3.3}}\label{s6.3}
Here, analogously to arguments of the previous subsection, we
exploit Corollary \ref{c3.2} and Lemma \ref{l2.2}. On this route, we
need additional properties of the function $\F_{\gamma,a}(t),$
described in the following statement and easily verified using Lemma
\ref{l2.2}.
\begin{proposition}\label{p6.3}
Let assumptions of Theorem \ref{t2.2} and h7 hold. Then the function
$\F_{\gamma,a}(t)$ computed via \eqref{2.5*} satisfies the following
relations

\noindent (i) $\F_{\gamma,a}(t)=\F_{\gamma}(t)+\c_2(t)$ for all
$t\in[0,t^*_1]$ with $\c_2$ being defined in \eqref{2.13};

\noindent (ii) $\F_{\gamma,a}(0)=\c_2(0)\neq 0.$
\end{proposition}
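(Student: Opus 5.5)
Part (i) is a direct algebraic comparison of the definitions \eqref{2.5*} and \eqref{i.12*}, and part (ii) then follows by evaluating at $t=0$. I will treat the I type FDO; the II type case is identical after replacing $\rho_i\D_t^{\nu_i}\psi$ by $\D_t^{\nu_i}(\rho_i\psi)$.

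For (i), I expand $\mathbf{D}_t\psi$ in \eqref{i.12*} via \eqref{i.1}:
\[
\F_{\gamma}(t)=\c_3(t)-\sum_{i=2}^{M}\rho_i(t)\D_t^{\nu_i}\psi(t)-\rho_1(t)\D_t^{\nu_1}\psi(t),
\]
where $\c_3$ is taken from \eqref{2.23}. In the SIP all $\nu_i$ with $i\ge2$ are known, so the sum in \eqref{2.5*} runs over every $i\geq 2$; the restriction ``$i\neq i^*$'' there should be read this way in the SIP setting. Hence
\[
\F_{\gamma,a}(t)=\c_3(t)-\sum_{i=2}^{M}\rho_i(t)\D_t^{\nu_i}\psi(t)-\rho_1(t)\D_t^{\nu_{1,a}}\psi(t).
\]
Subtracting the two expressions leaves $\rho_1[\D_t^{\nu_1}-\D_t^{\nu_{1,a}}]\psi=\c_2$, by \eqref{2.13}. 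This identity holds on $[0,t_1^*]$: by h7, $\D_t^{\nu_{1,a}}\psi$ exists and is continuous there, and Theorem \ref{t2.2} with \eqref{2.2*} gives the remaining terms as continuous functions on $[0,t^*]\supset[0,t_1^*]$. Lemma \ref{l2.2}(iii)--(iv) supplies the corresponding H\"older bounds.

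For (ii), I set $t=0$ in (i), which gives $\F_{\gamma,a}(0)=\F_\gamma(0)+\c_2(0)$. Since $\c_2(0)\neq0$ by \eqref{2.15}, it suffices to show $\F_\gamma(0)=0$. For this I use the representation from \cite[Section 4]{HPV} already quoted above:
\[
\F_\gamma(t)=t^{1-\gamma}\int_0^1 z^{-\gamma}\mathcal{K}_0(tz)\c_1(t(1-z))\,dz .
\]
The integral is bounded by $\|\mathcal{K}_0\|_{\C}\|\c_1\|_{\C}/(1-\gamma)$, and $1-\gamma>0$, so $\F_\gamma(t)\to0$ as $t\to0$. Continuity of $\F_\gamma$ (Lemma \ref{l2.2}(iii)) then gives $\F_\gamma(0)=0$.

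The only real subtlety is the index bookkeeping in the sums of \eqref{2.5*}, namely ensuring that $\F_{\gamma,a}$ and $\F_\gamma$ differ only in the $\nu_1$-term. Once the sums are matched as above, the rest is immediate.
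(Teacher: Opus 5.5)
Your proof is correct and follows essentially the route the paper only sketches (``easily verified using Lemma \ref{l2.2}''). Part (i) is the term-by-term comparison of \eqref{2.5*} with \eqref{i.12*}, and you rightly read the stray restriction $i\neq i^*$ in \eqref{2.5*} as a typo, since taken literally it would leave an extra term $\rho_{i^*}\D_t^{\nu_{i^*}}\psi$. Part (ii) reduces to $\F_\gamma(0)=0$, which follows from $\F_\gamma=\c_3-\c_\nu=\mathcal{K}*\c_1$ (the scaled convolution representation you quote), together with $\c_2(0)\neq 0$ from h7; this mirrors how \eqref{5.5} is used for $\F_\nu$ in the first inverse problem.
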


\begin{lemma}\label{l6.3}
Let $\varepsilon_{I}\in(0,1-\bar{\nu})$. Under assumptions of
Proposition \ref{p6.3}, the bound \eqref{6.3.3} holds for all
$\mu,\varepsilon_{3,3}\in(0,1)$ and any $t_a\in(0,T_{III}^{3}]$,
where
\[
T_{III}^{3}=\min\Big\{t_1^*,\Big(\frac{\varepsilon|\F_{\gamma,a}(0)|}{3[C_3\mathcal{R}+C_6\mathcal{R}_1+\mathcal{R}_3]}\Big)^{1/\alpha_7},T_I\Big\}
\]
with $\alpha_7\in(0,\min\{\alpha_1,\alpha\nu_1/2\}]$ and
$\varepsilon\in(0,1-\mu^{\varepsilon_{3,3}})$, and $T_I$ being
defined in Theorem \ref{t2.3}.
\end{lemma}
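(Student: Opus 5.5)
The plan is to follow the proof of Proposition \ref{l5.2} almost verbatim, with $\F_{\nu,a}$ replaced by $\F_{\gamma,a}$ and Lemma \ref{l2.1} replaced by Lemma \ref{l2.2}. We apply Corollary \ref{c3.2} with
\[
F(t)=\frac{\F_{\gamma,a}(\mu t)-\F_{\gamma,a}(t)}{\F_{\gamma,a}(t)},\quad \lambda=\mu,\quad \varepsilon_2^*=\varepsilon_{3,3},\quad \varepsilon^*=\varepsilon\in(0,1-\mu^{\varepsilon_{3,3}}),\quad T^*=t^*<1 .
\]
Then $|\log_\mu|1+F(t_a)||=|\log_\mu(|\F_{\gamma,a}(\mu t_a)|/|\F_{\gamma,a}(t_a)|)|$, which is exactly the left-hand side of \eqref{6.3.3}. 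It therefore suffices to check three things on $[0,T_{III}^3]$: $F(0)=0$, $F$ is continuous, and $|F|<\varepsilon$.

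First, the restriction $t_a\le T_I$ together with $\varepsilon_I\in(0,1-\bar\nu)$ and Corollary \ref{c.f1} gives $\nu_{1,a}<1$, so $\F_{\gamma,a}$ is well defined. Assumption h7 together with Lemma \ref{l2.2}(iv) gives
\[
\F_{\gamma,a}\in\C^{\alpha_7}([0,t_1^*]),\qquad \langle\F_{\gamma,a}\rangle^{(\alpha_7)}_{t,[0,t_1^*]}\le C_3\mathcal{R}+C_6\mathcal{R}_1+\mathcal{R}_3=:\mathcal{M}
\]
for every $\alpha_7\le\min\{\alpha_1,\alpha\nu_1/2\}$. This holds because a H\"older norm with a larger exponent on an interval of length less than $1$ controls the one with a smaller exponent. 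Proposition \ref{p6.3}(ii) gives $\F_{\gamma,a}(0)=\c_2(0)\neq0$, hence $F(0)=0$.

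Second, we bound the denominator from below. For $t_a\le\big(\varepsilon|\F_{\gamma,a}(0)|/(3\mathcal{M})\big)^{1/\alpha_7}$ we have
\[
|\F_{\gamma,a}(t_a)|\ge|\F_{\gamma,a}(0)|-\mathcal{M}t_a^{\alpha_7}\ge|\F_{\gamma,a}(0)|(1-\varepsilon/3)>0 .
\]
This yields the continuity of $F$ on $[0,T_{III}^3]$.

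Third, we bound the numerator: $|\F_{\gamma,a}(\mu t_a)-\F_{\gamma,a}(t_a)|\le\mathcal{M}(1-\mu)^{\alpha_7}t_a^{\alpha_7}\le\varepsilon|\F_{\gamma,a}(0)|/3$. Combining the two bounds,
\[
|F(t_a)|\le\frac{\varepsilon/3}{1-\varepsilon/3}=\frac{\varepsilon}{3-\varepsilon}<\varepsilon .
\]
Corollary \ref{c3.2} then gives \eqref{6.3.3} for all $t_a\in(0,T_{III}^3]$.

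The main obstacle is the first step: making sure that the H\"older regularity of $\F_{\gamma,a}$, with the stated constant $C_3\mathcal{R}+C_6\mathcal{R}_1+\mathcal{R}_3$, is legitimately available. This rests on $\D_t^{\nu_{1,a}}\psi\in\C^{\alpha_1}$ from h7 (via $\c_2$ and Lemma \ref{l2.1}(ii)) and on $\nu_{1,a}<1$, which is where the constraint $t_a\le T_I$ enters. To settle it, I would use the decomposition of Proposition \ref{p6.3}(i), $\F_{\gamma,a}=\F_\gamma+\c_2$, and estimate the two pieces separately by Lemma \ref{l2.2}(iii) and Lemma \ref{l2.1}(ii).
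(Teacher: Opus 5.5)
Your proposal is correct and follows essentially the same route as the paper. Both apply Corollary \ref{c3.2} to $F(t)=(\F_{\gamma,a}(\mu t)-\F_{\gamma,a}(t))/\F_{\gamma,a}(t)$ using $\F_{\gamma,a}(0)=\c_2(0)\neq0$, bound the denominator below by $|\F_{\gamma,a}(0)|(1-\varepsilon/3)$ and the numerator above via Lemma \ref{l2.2}(iv), and conclude $|F|\le\varepsilon/(3-\varepsilon)<\varepsilon$ (your appeal to $t_a\le T_I$ and Corollary \ref{c.f1} for $\nu_{1,a}<1$ is harmless but not needed, since h7 already assumes it).
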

\begin{proof}
It is apparent that Proposition \ref{p6.3} along with Lemma
\ref{l2.2} provide the continuity of $\F_{\gamma,a}(t)$ on
$[0,t_1^*]$ and the equality
\[
\frac{\F_{\gamma,a}(\mu
t_a)-\F_{\gamma,a}(t_a)}{\F_{\gamma,a}(t_a)}\Big|_{t_a=0}=0.
\]
Thus, in order to apply Corollary \ref{c3.2},
 which in turn ends up with the desired result, we are left to obtain the estimate
 \begin{equation}\label{6.12}
\Big|\frac{\F_{\gamma,a}(\mu
t_a)-\F_{\gamma,a}(t_a)}{\F_{\gamma,a}(t_a)}\Big|<\varepsilon
 \end{equation}
 for any $t_a\leq T_{III}^{3}$.

 \noindent Lemma \ref{l2.2} and Proposition \ref{p6.3} yield
 \[
|\F_{\gamma,a}(t_a)|\geq
|\F_{\gamma,a}(0)|\Big|1-\frac{t_a^{\alpha_4}[C_3\mathcal{R}+C_6\mathcal{R}_1+\mathcal{R}_3]}{|\F_{\gamma,a}(0)|}
\geq |\F_{\gamma,a}(0)|(1-\varepsilon/3),
 \]
 if
 \begin{equation}\label{6.13}
t_a\in\Big(0,\min\Big\{t_1^*,\Big(\frac{\varepsilon
|\F_{\gamma,a}(0)|}{3[C_3\mathcal{R}+C_6\mathcal{R}_1+\mathcal{R}_3]}\Big)^{1/\alpha_4}\Big\}\Big].
 \end{equation}
 In fine, collecting the last estimate with  Lemma
 \ref{l2.2}, we arrive at
 \[
\Big|\frac{\F_{\gamma,a}(\mu
t_a)-\F_{\gamma,a}(t_a)}{\F_{\gamma,a}(t_a)}\Big|\leq
\frac{3t_a^{\alpha_4}(1-\mu)^{\alpha_4}[C_3\mathcal{R}+C_6\mathcal{R}_1+\mathcal{R}_3]}{|\F_{\gamma,a}(0)|(3-\varepsilon)}<\frac{\varepsilon}{3-\varepsilon}
 \]
 for each $t_a$ satisfying \eqref{6.13} and $\varepsilon\in(0,1).$
 Thus, the last estimate ends up with \eqref{6.12}, which completes
 the proof of this lemma.
\end{proof}

\subsection{Completion of the proof of Theorem
\ref{t2.5}}\label{s6.4} Now, the proof of Theorem \ref{t2.5} follows
immediately from Proposition \ref{p6.1}
 and Lemmas \ref{l6.1}-\ref{l6.3}.
 Indeed, in the case of given $\nu_1$, i.e. $\nu_1=\nu_{1,a}$,
 Proposition \ref{p6.1} with $\varepsilon_{3,1}=\varepsilon_{III}$
 arrives at \eqref{2.6.3} for all
 $\mu,\varepsilon_{III},\gamma\in(0,1)$ and $t_a\in(0,T_{\gamma}]$.
 The latter deduces Theorem \ref{t2.5} with
 $T_{III}=T_{\gamma}$  in the special case \eqref{6.4}.

 Coming to the general case ($\nu_1\neq\nu_{1,a}$) and taking into
 account the restriction on $\varepsilon_{I}$ (see Corollary \ref{c.f1}), we conclude that $T_{III}\leq
 T_I^0$ with $T_I^0$ given by \eqref{2.10*} with
 $\varepsilon_I\in(0,1-\bar{\nu})$. Appealing to  this fact and
 employing Lemmas \ref{l6.1}-\ref{l6.3} with
$\varepsilon_{3,1}=\varepsilon_{3,3}=\varepsilon_{3,2}+\bar{\gamma}-1=\frac{\varepsilon_{III}-1+\bar{\gamma}}{3}$,
we end up with the desired result with
$T_{III}=\min\{T_{III}^1,T_{III}^2,T_{III}^3,T_{I}\}$. In
conclusion, we remark that in the case of $M\geq 2$, we may select
$\alpha_{6}=\min\{\alpha_5,\alpha/2,\frac{2\nu_2}{2-\alpha}\}$,
$\alpha_{7}=\min\{\alpha_1,\alpha\nu_2/2\}$ in $T_{III}^{3}$ (see
Lemma \ref{l6.3}).  That finishes the verification of Theorem
\ref{t2.5}. \qed

\section{Justification of inequality \eqref{2.15}}
\label{s7}

\noindent It is worth noting that assumption h7 is an implicit
condition on the measurement $\psi$ and on the right-hand sides in
\eqref{i.2}-\eqref{i.3}, coefficients in the corresponding
operators, the memory kernel and the orders of the fractional
derivatives. In this section, we first describe sufficient
conditions on the given data which provide the fulfillment of
\eqref{2.15} in h7. After that, we give an explicit example of
\eqref{i.1}-\eqref{i.3}, which generates $\psi(t)$ such that all
requirements in Theorems \ref{t2.3}-\ref{t2.5} (and in particular
h7) are satisfied.
\begin{lemma}\label{l7.1}
Let $\D_{t}^{\nu_{1,a}}\psi\in\C^{\alpha_1}([0,t_1^*])$ and
$\nu_{1,a}<\nu_1$ with $\nu_{1,a}$ being computed via \eqref{2.3}.
Then, under assumptions of either Theorem \ref{t2.1} or Theorem
\ref{t2.2}, the following inequalities hold
\begin{equation}\label{7.1}
\D_{t}^{\nu_1}\psi(t)|_{t=0}\neq 0, \quad
\D_{t}^{\nu_1}(\rho_1(t)\psi(t))|_{t=0}\neq 0\quad\text{but}\quad
\D_{t}^{\nu_{1,a}}\psi(t)|_{t=0}=
\D_{t}^{\nu_{1,a}}(\rho_1(t)\psi(t))|_{t=0}=0,
\end{equation}
and, besides, \eqref{2.15} holds.
\end{lemma}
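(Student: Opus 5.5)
The plan is to exploit the small-time behaviour of $\psi$ near $t=0$ that is already encoded in the proof of Lemma \ref{l3.1}. There, for $v=\psi$, $r_i=\rho_{i+1}$, $\mu_i=\nu_{i+1}$, we have
\[
\psi(t)-\psi(0)=\frac{t^{\nu_1}\mathbf{D}_t^{I}\psi(0)}{\rho_1(0)\Gamma(1+\nu_1)}+w_1(t),\qquad |w_1(t)|\leq C t^{\nu_1+\nu^*},
\]
with the analogous expansion for $\rho_1\psi$ in the II type case.

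The key preliminary observation is that $\mathbf{D}_t^{\nu_i}\psi(0)=0$ for $i\geq 2$, and likewise $\mathbf{D}_t^{\nu_i}(\rho_i\psi)(0)=0$. This holds because $\mathbf{D}_t^{\nu_1}\psi$ is continuous, which forces $\psi-\psi(0)=O(t^{\nu_1})$, and then $\mathbf{D}_t^{\nu_i}\psi=O(t^{\nu_1-\nu_i})\to 0$. Consequently $\mathbf{D}_t\psi(0)=\rho_1(0)\mathbf{D}_t^{\nu_1}\psi(0)$, and by \eqref{2.2*} this equals $\c_{\nu,0}\neq 0$. The same argument in the II type case gives $\mathbf{D}_t^{\nu_1}(\rho_1\psi)(0)=\c_{\nu,0}\neq0$. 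Since $\rho_1(0)\geq\varrho_3>0$, both nonvanishing assertions in \eqref{7.1} follow; for the I type case $\mathbf{D}_t^{\nu_1}(\rho_1\psi)(0)=\rho_1(0)\mathbf{D}_t^{\nu_1}\psi(0)$ by the product rule at $t=0$, which holds because $\rho_1\in\C^{\nu}$.

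For the vanishing assertions, the step I expect to be the main obstacle, I use the hypothesis $\mathbf{D}_t^{\nu_{1,a}}\psi\in\C^{\alpha_1}([0,t_1^*])$. Write $\psi(t)-\psi(0)=I^{\nu_{1,a}}[\mathbf{D}_t^{\nu_{1,a}}\psi](t)$, where $I^{\nu_{1,a}}$ denotes the Riemann--Liouville fractional integral of order $\nu_{1,a}$. Suppose $c:=\mathbf{D}_t^{\nu_{1,a}}\psi(0)\neq0$. Then
\[
\psi(t)-\psi(0)=\frac{c\,t^{\nu_{1,a}}}{\Gamma(1+\nu_{1,a})}+O(t^{\nu_{1,a}+\alpha_1}),
\]
so $|\psi(t)-\psi(0)|\geq c' t^{\nu_{1,a}}$ for small $t$. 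But the expansion above gives $|\psi(t)-\psi(0)|\leq Ct^{\nu_1}$. Because $\nu_{1,a}<\nu_1$, the ratio $t^{\nu_1}/t^{\nu_{1,a}}\to0$, which is a contradiction. Hence $\mathbf{D}_t^{\nu_{1,a}}\psi(0)=0$. The delicate point is that $\nu_{1,a}$ depends on $t_a$, but it is a fixed number once $t_a$ is fixed, so the asymptotic comparison as $t\to0$ is legitimate. For $\rho_1\psi$, I would either repeat the argument with $\rho_1\psi$ in place of $\psi$, invoking Lemma \ref{l2.1}(i) for its H\"older regularity, or apply the product rule at $t=0$: since $\rho_1\in\C^{\nu}$ with $\nu>1$, the correction $\mathbf{D}_t^{\nu_{1,a}}((\rho_1-\rho_1(0))\psi)$ is $O(t^{1-\nu_{1,a}})$ and vanishes at $0$.

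Finally, \eqref{2.15} follows directly from the definition \eqref{2.13}. In the I type case
\[
\c_2(0)=\rho_1(0)\big[\mathbf{D}_t^{\nu_1}\psi(0)-\mathbf{D}_t^{\nu_{1,a}}\psi(0)\big]=\rho_1(0)\mathbf{D}_t^{\nu_1}\psi(0)=\c_{\nu,0}\neq0,
\]
and in the II type case $\c_2(0)=\mathbf{D}_t^{\nu_1}(\rho_1\psi)(0)\neq0$ by \eqref{7.1}.
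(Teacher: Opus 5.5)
Your proof is correct and has the same skeleton as the paper's. The steps are: the Caputo derivatives of order below $\nu_1$ vanish at $t=0$; $\mathbf{D}_t\psi(0)=\mathfrak{C}_{\nu,0}$ therefore reduces to its leading term; the product rule holds at $t=0$ for $\rho_1\psi$; and $\mathfrak{C}_2(0)$ is then read off from \eqref{2.13}.

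The difference lies in how the vanishing facts are obtained. The paper simply quotes Lemmas 5.2--5.4 of \cite{SV1} and Proposition 5.5 of \cite{SV}. You instead derive the key fact directly. The representation $\psi-\psi(0)=I^{\theta}\mathbf{D}_t^{\theta}\psi$, together with $|\psi(t)-\psi(0)|\le Ct^{\nu_1}$, shows that a nonzero $\mathbf{D}_t^{\nu_{1,a}}\psi(0)$ would force the incompatible rate $t^{\nu_{1,a}}$. This makes the lemma self-contained and shows that $\nu_{1,a}<\nu_1$ enters only through this mismatch of rates. The paper's version is shorter but hides that mechanism inside the cited lemmas.

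One inference needs repair. ``$\psi-\psi(0)=O(t^{\nu_1})$, hence $\mathbf{D}_t^{\nu_i}\psi=O(t^{\nu_1-\nu_i})$'' is not valid as written, because a size bound on a function does not by itself control its fractional derivative. There are two easy fixes:
\begin{itemize}
\item Use the semigroup identity $\mathbf{D}_t^{\theta}\psi=I^{\nu_1-\theta}\mathbf{D}_t^{\nu_1}\psi$ for $0<\theta<\nu_1$. It holds because $\mathbf{D}_t^{\nu_1}\psi$ is continuous by \eqref{2.2*}, and it gives $|\mathbf{D}_t^{\theta}\psi(t)|\le \|\mathbf{D}_t^{\nu_1}\psi\|_{\mathcal{C}([0,t^*])}\,t^{\nu_1-\theta}/\Gamma(1+\nu_1-\theta)$.
\item Use your own contradiction argument. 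It applies here because $\mathbf{D}_t^{\nu_i}\psi$ and $\mathbf{D}_t^{\nu_i}(\rho_i\psi)$ are continuous by \eqref{2.2*} and Corollary \ref{c2.1}.
\end{itemize}
The semigroup identity with $\theta=\nu_{1,a}$ also gives $\mathbf{D}_t^{\nu_{1,a}}\psi(0)=0$ directly. For the conclusions of this lemma, the $\mathcal{C}^{\alpha_1}$ hypothesis is then not needed at all.
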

\begin{proof}
The verification of this claim follows from   Lemmas 5.2 and 5.4 in
\cite{SV1}, which tell that under assumptions of Lemma \ref{l7.1}
(i.e. $\nu_{1,a}<\nu_1$)
\[
\D_{t}^{\nu_{1,a}}\psi(t)|_{t=0}=\D_{t}^{\nu_{1,a}}(\rho_1(t)\psi(t))|_{t=0}=0\quad\text{and}\quad
\D_{t}^{\nu_{1}}\psi(t)|_{t=0}=\rho^{-1}_{1}(0)\D_t\psi|_{t=0}.
\]
In force of \eqref{2.2*}, we have
\[
\D_{t}^{\nu_{1}}\psi(t)|_{t=0}=\rho^{-1}_{1}(0)\D_t\psi|_{t=0}=\rho^{-1}_{1}(0)\c_{\nu,0}\neq
0.
\]
It is worth noting that the last inequality is provided by the
nonvanishing of $\c_{\nu,0}$, which stated in assumptions of
Theorems \ref{t2.1} and \ref{t2.2}.

In order to verify nonvanishing of
$\D_{t}^{\nu_{1}}(\rho_1(t)\psi(t))|_{t=0}$, we apply
\cite[Proposition 5.5]{SV} and obtain
\[
\D_{t}^{\nu_{1}}(\rho_1(t)\psi(t))=\rho_1(t)\D_{t}^{\nu_1}\psi+\psi(0)\D_{t}^{\nu_1}\rho_1(t)
+\frac{\nu_1}{\Gamma(1-\nu_1)}\int\limits_{0}^{t}\frac{[\rho_1(t)-\rho_1(s)][\psi(s)-\psi(0)]}{(t-s)^{1+\nu_1}}ds.
\]
At last, appealing the smoothness of $\rho_1$ and $\psi$ and
exploiting \cite[Lemmas 5.2-5.3]{SV1}, we arrive at the relations:
\[
\D_{t}^{\nu_1}\rho_1(0)=0,\quad
\int\limits_{0}^{t}\frac{[\rho_1(t)-\rho_1(s)][\psi(s)-\psi(0)]}{(t-s)^{1+\nu_1}}ds\bigg|_{t=0}=0,
\]
which in turn end up with the equalities
\[
\D_{t}^{\nu_{1}}(\rho_1(t)\psi(t))|_{t=0}=\rho_1(0)\D_{t}^{\nu_{1}}\psi(t)|_{t=0}=\c_{\nu,0}
\]
leading to  the desired results.
\end{proof}
\begin{remark}\label{r7.0}
Corollary \ref{c.f1} suggests that if $t_1^*<T_{I}$ with
$\varepsilon_{I}\in(0,1-\bar{\nu})$, then $\nu_{1,a}<1$, and the
assumption $\nu_{1,a}<\nu_1$ makes sense.
\end{remark}
\begin{remark}\label{r7.1}
It is apparent that the function
\begin{equation}\label{7.2}
\psi(t)=c_{\nu}t^{\nu_1}+\sum_{i=0}^{m}c_{i}t^{i}\quad\text{with}\quad
m\in\mathbb{N}\quad \text{and}\quad c_{i}\in\R,\, c_{\nu}\neq 0
\end{equation}
is a simple example of the observation satisfying the conditions
\[
\D_t^{\theta}\psi|_{t=0}=0\quad\forall\, \theta\in(0,\nu_1),\quad
\D_{t}^{\nu_1}\psi|_{t=0}=c_{\nu}, \quad\text{and}\quad
\D_t^{\theta}\psi\in\C^{\nu_1}([0,T])
\]
for any finite $T>0$. Thus, selecting $\theta=\nu_{1,a}<\nu_1$, we
conclude that the measurement \eqref{7.2}
 satisfies h7 only if $\nu_{1,a}<\nu_1$.
\end{remark}

As for the initial-boundary value problem which generates $\psi(t)$
having form \eqref{7.2} with  $\nu_{1,a}<\nu_1$, we refer to
\cite[Example 7.2]{HPV}. For reader's convenience, we report it (in
our notation) here below.

\noindent Setting $\Omega=(0,2)\times(0,2)$ and $T=1$, we easily
examine that for any $\nu_1\in(0,1)$, the function
\begin{equation}\label{7.3}
\psi(t)=\frac{256}{225}(2+t^{\nu_1})=\int\limits_{(0,2)\times(0,2)}v(x_1,x_2,t)dx_1dx_2,
\end{equation}
 where $v$ solves the following problem
\begin{equation}\label{7.4}
\begin{cases}
\frac{1}{2}\D_{t}^{\nu_1}v-\frac{1+t^2}{4}\D_{t}^{\nu_1/5}v-\Delta
v-\mathcal{K}*(\Delta
v+4v)=\sum_{i=1}^{3}g_{i}\quad\text{in}\quad \Omega_{T},\\
v|_{t=0}=2x_1^2x_2^2(2-x_1)^2(2-x_2)^2\quad\text{in}\quad\bar{\Omega},\\
\frac{\partial v}{\partial N}=0\quad\text{on}\quad
\partial\Omega_T,
\end{cases}
\end{equation}
where $\mathcal{K}=t^{-\gamma}(1+t),$ $\gamma\in(0,1)$, and
\begin{align*}
g_1&=x_1^2x_2^2(2-x_1)^2(2-x_2)^2\bar{g}_{1}(t),\\
g_2&=-4[x_2^2(2-x_2)^2(2-6x_1+3x_1^2)+x_1^2(2-x_1)^2(2-6x_2+3x_2^2)]\bar{g}_2(t),\\
g_3&=-4[x_2^2(2-x_2)^2(2-6x_1+3x_1^2)+x_1^2(2-x_1)^2(2-6x_2+3x_2^2)+x_1^2x_2^2(2-x_1)^2(2-x_2)^2]\bar{g}_3(t)
\end{align*}
with
\begin{align*}
\bar{g}_1(t)&=\frac{\Gamma(1+\nu_1)}{2}-\frac{(1+t^2)t^{4\nu_1/5}\Gamma(1+\nu_1)}{4\Gamma(1+4\nu_1/5)},\qquad
\bar{g}_2(t)=2+t^{\nu_1},\\
 \bar{g}_{3}(t)&=\frac{2t^{1-\gamma}}{1-\gamma}+\frac{2t^{2-\gamma}}{2-\gamma}+
 \frac{\Gamma(1-\gamma)\Gamma(1+\nu_1)t^{1+\nu_1-\gamma}}{\Gamma(2-\gamma+\nu_1)}
 +
 \frac{\Gamma(2-\gamma)\Gamma(1+\nu_1)t^{2-\gamma+\nu_1}}{\Gamma(3-\gamma+\nu_1)}.
\end{align*}
The outcome of numerical computations in \cite[Table 7.1]{HPV} for
$\nu_1=0.1+0.i,$ $i=1,2,...,8,$ demonstrates that $\nu_{1,a}<\nu_1$.
\begin{table}
  \begin{center}
 \caption{The quantities $\nu_1$ and $\nu_{1,a}$ in \cite[Table 7.1]{HPV}}
   \label{tab:e2.1}
  \begin{tabular}{c|c|c|c|c|c|c|c|c}
           \hline
       \!$\nu_1$   & 0.1 & 0.2 & 0.3 & 0.4 & 0.5 & 0.6 & 0.7 & 0.8 \\
      \!$\nu_{1,a}$ & 0.0867 & 0.1877 & 0.2920 & 0.3890 & 0.4894 & 0.5904 & 0.6881 & 0.7878\\
 \hline
 \end{tabular}
  \end{center}
\end{table}
In conclusion, Lemma \ref{l7.1} along with Remark \ref{r7.1} and
initial-value problem \eqref{7.4} give the sufficient conditions
providing \eqref{2.15} and the corresponding example demonstrating
the fulfillment all  the assumptions in Theorems
\ref{t2.4}-\ref{t2.5}.


\section{Algorithm of the Simultaneous Reconstruction of $(\nu_1,\nu_{i^*})$ or $(\nu_1,\gamma)$}
\label{s8}

\subsection{Description of the reconstruction algorithm.}
\label{s8.1}

Suppose that we have integral measurements of the solution
$u(x,t)$ to \eqref{i.2}-\eqref{i.3} at discrete time moments
$t_{k},$ $k=1,2,\ldots,K$, $0<t_{1}<t_{2}<\ldots<t_{K}\leq t^{*}$.
We also assume the presence of noise $\{\delta_k\}_{k=1}^K$ affecting the observations, so that, in
practice, we thus have
\[
\psi_{\delta,k}=\int_{\Omega}u(x,t_{k})dx+\delta_{k},\quad
k=1,2,\ldots,K.
\]
The initial condition implies that
\[
\int_{\Omega}u(x,0)dx=\int_{\Omega}u_{0}(x)dx=\psi_{0}.
\]
Formulas \eqref{i.13}, \eqref{i.14}, \eqref{i.15} for recovering unknown parameters involve limits with respect to the continuous variable $t$. Thus, to apply these formulas to discrete noisy measurements, a recovery algorithm must first approximately reconstruct the function $\psi(t)=\int_{\Omega}u(x,t)dx$ from the values $\psi_{\delta,k},$ $k=0,1,\ldots,K$, where we also set
\[
\psi_{\delta,0} \equiv \psi_{0} \equiv \int_{\Omega}u_{0}(x)dx.
\]
Note that in \eqref{i.14} and \eqref{i.15}, the functions $\mathcal{F}_\nu(t)$ and $\mathcal{F}_\gamma(t)$ depend on $\psi(t)$ and its fractional derivatives, which must also be reconstructed from~$\{\psi_{\delta,k}\}$.

As suggested by the small-time asymptotics of $\psi(t)$ (see
comments in \cite{HPV, HPSV}), we require that the recovered
function be (at least) square integrable on $(0,t_{K}),$ $t_{K}\leq
t^{*}$ with an unbounded weight $\varrho(t)=t^{-\mathrm{a}},$
$\mathrm{a}\in(0,1)$, $\psi(t)\in L_{t^{-\mathrm{a}}}^{2}(0,t_{K})$.
Minimizing a penalized least-squares functional (within the Tikhonov
regularization framework \cite{LP,IJ,TG}), we construct a function
approximating $\psi(t)$ from the noisy
data~$\{\psi_{\delta,k}\}_{k=0}^{K}$:
\begin{equation}\label{8.1*}
\sum_{k=0}^{K}[\psi(t_{k})-\psi_{\delta,k}]^{2}+ \sigma
\|\psi\|^{2}_{L_{t^{-\mathrm{a}}}^{2}(0,t_{K})}\longrightarrow \min,
\end{equation}
where $\sigma$ is a regularization parameter. For the practical implementation of parameters reconstruction,
we look for an approximate minimizer of \eqref{8.1*} in the following finite-dimensional form:
\begin{equation}\label{8.2}
\psi_{\delta}(\sigma,t)=\sum_{j=1}^{\mathfrak{I}}q_{j}t^{\beta_{j}}
+\sum_{j=\mathfrak{I}+1}^{\mathfrak{P}}q_{j}P_{j-\mathfrak{I}-1}^{(0,-\mathrm{a})}(t/t_{K}).
\end{equation}
Here the shifted Jacobi polynomials are conventionally defined as~\cite{KPSV1, KPSV2, HPV, HPSV}
\[
P_{m}^{(0,-\mathrm{a})}(t/t_{K})=\sum_{i=0}^{m}\left(\begin{array}{c}
    m\\
    i
\end{array}\right)
\left(\begin{array}{c}
    m-\mathrm{a}\\
   m-i
\end{array}\right)\!(t/t_{K}-1)^{m-i}(t/t_{K})^{i}\quad\text{with}\quad
t\in(0,t_{K}) ,
\]
and they form an orthogonal system in
$L_{t^{-\mathrm{a}}}^{2}(0,t_{K})$. Power functions $t^{\beta_{j}}$
($j=1,2,\ldots,\mathfrak{I}$) help capture the small-time
asymptotics of the exact problem solution \cite{KPSV1, KPSV2, HPV,
HPSV} (so that $\beta_{1}<\beta_{2}<\ldots<\beta_{\mathfrak{I}}$ are
the initial fractional order guesses, if any). Using
\cite[Corollary~2.7]{Heit2009} we can also rewrite the above
polynomials in the following concise form which appears to be more
convenient when calculating the Caputo derivatives and convolutions
of the approximating function:
\begin{equation}
\label{factorized_Jacobi_pol} P_{m}^{(0,-\mathrm{a})}(t/t_{K}) =
\sum_{i=0}^m (-1)^{m-i} \left(\begin{array}{c}
    m\\
    i
\end{array}\right) \left(\begin{array}{c}
    m-\mathrm{a}+i\\
    m
\end{array}\right) (t/t_K)^i .
\end{equation}


\noindent The unknown coefficients $q_{j}$ in \eqref{8.2} are identified from the following system of linear algebraic equations, which in matrix form reads:
\[
(\mathbb{E}^{T}\mathbb{E}+\sigma\mathbb{H})\mathbf{q}=\mathbb{E}^{T}\bar{\psi_{\varepsilon}},
\]
where we set
\begin{align*}
\mathbf{q}&=(q_{1},\ldots,q_{\mathfrak{P}}),\quad
\bar{\psi}_{\delta}=(\psi_{\delta,0},\psi_{\delta,1},\ldots,\psi_{\delta,K})^{T},\\
\mathbb{E}&=\{E_{ij}\}^{K,\quad\mathfrak{P}}_{i=0,j=1},\quad
E_{ij}=e_{j}(t_{i}),\\
\mathbb{H}&=\{H_{l,m}\}_{l,m=1}^{\mathfrak{P}},\quad
H_{l,m}=\int_{0}^{t_{K}}t^{-\mathrm{a}}e_{l}(t)e_{m}(t)dt,\\
e_{l}(t)&=
\begin{cases}
t^{\beta_{l}},\quad l=1,2,\ldots,\mathfrak{I},\\
P_{l-\mathfrak{I}-1}^{(0,-\mathrm{a})}(t/t_{K}),\quad
l=1+\mathfrak{I},\ldots,\mathfrak{P}.
\end{cases}
\end{align*}
Note that, provided the regularization parameter $\sigma$ is
found and the approximate recovery of $\psi(t)$ in the form $\psi_{\delta}(\sigma,t)$ is performed, we still need to compute the limits in the reconstruction formulas \eqref{i.13}, \eqref{i.14}, \eqref{i.15}. In general, the computation of limits is an ill-posed problem from a numerical point of view \cite{LP}, and thus requires regularization as well. As an approximate value of the $\nu_1$-limit, one could use the quantity
\begin{equation}\label{8.1}
\nu_{1,\delta}(\sigma,\bar{t})=
\begin{cases}
\frac{\ln|\psi_{\delta}(\sigma,\bar{t})-\psi_{0}|}{\ln
\bar{t}}\qquad\qquad\qquad\text{in the case of the I type FDO},\\
\\
\frac{\ln|\rho_{1}(\bar{t})\psi_{\delta}(\sigma,\bar{t})-\rho_{1}(0)\psi_{0}|}{\ln
\bar{t}}\qquad\text{ in the case of the II type FDO},
\end{cases}
\end{equation}
computed at a point $t=\bar{t}$ sufficiently close to zero, which can also be treated as a regularization parameter. As for recovering $\nu_{i^*}$ in FIP or $\gamma$ in SIP, recalling the magnitude of $\bar{\nu}_{1,\delta} = \nu_{1,\delta}(\sigma,\bar{t})$ and taking into account \eqref{i.14} and \eqref{i.15} we arrive~at
\begin{equation}\label{8.5}
\bar{\nu}_{i^*,\delta}=\nu_{i^*,\delta}({\sigma},\bar{t})=\bar{\nu}_{1,\delta}
-\log_{\lambda}\bigg|\frac{\mathcal{F}_{\nu,\delta}({\sigma},\bar{t}\lambda)}{\mathcal{F}_{\nu,\delta}({\sigma},\bar{t})}\bigg|, \qquad\qquad
\bar{\gamma}_\delta = \gamma_\delta({\sigma},\bar{t}) = 1-\log_{\mu}\bigg|\frac{\mathcal{F}_{\gamma,\delta}({\sigma},\mu\bar{t})}{\mathcal{F}_{\gamma,\delta}({\sigma},\bar{t})}\bigg|,
\end{equation}
where the parameters $\lambda,\mu\in(0,1)$ are selected by the user, and the functions $\mathcal{F}_{\nu,\delta}(\sigma,t)$ and $\mathcal{F}_{\gamma,\delta}(\sigma,t)$ are naturally calculated by substituting the approximations $\psi_{\delta}(\sigma,t)$ and $\bar{\nu}_{1,\delta}$ for the unknown continuous-argument $\psi(t)$ and order $\nu_1$ in the original definitions of functions $\mathcal{F}_{\nu}(t)$ and $\mathcal{F}_{\gamma}(t)$ (see \eqref{i.12},~\eqref{i.12*}). Note that this process requires calculating the fractional derivatives of the basis functions in \eqref{8.2}, particularly the Jacobi polynomials, where the factorized representation \eqref{factorized_Jacobi_pol} appears to be more suitable than their original definition. Thus, the pair $(\bar{\nu}_{1,\delta},\bar{\nu}_{i^*,\delta})$ (or $(\bar{\nu}_{1,\delta},\bar{\gamma}_\delta)$) computed via \eqref{8.1}-\eqref{8.5} could be considered the outcome of the proposed approximation scheme (at fixed arguments $\sigma$,~$\bar t$).


In practice, however, to calculate the regularized approximations $\nu_{1,\delta}(\sigma,\bar{t})$ and $\nu_{i^*,\delta}(\sigma,\bar{t})$ (or ${\gamma}_\delta(\sigma,\bar{t})$) of the true orders $\nu_{1}$ and $\nu_{i^*}$ (or $\gamma$) via the above scheme \eqref{8.1}-\eqref{8.5}, we still have to properly choose two regularization parameters: $\sigma$ and $\bar{t}$. Since the noise perturbation amplitudes $\delta_{k}$ are typically unknown, it is reasonable to rely on the so-called noise-level-free
regularization parameter choice rules \cite{KPSV1,KPSV2,HPV,HPSV,PSV2}.  First, the quasi-optimality criterion \cite{TG}, a heuristically motivated a posteriori rule, is one of the simplest and most efficient approaches among such strategies. Its application to the selection of multiple parameters has been discussed in \cite{FNP, LP}, and its effectiveness has been advocated in \cite{BK}.
Here, to utilize this criterion, we introduce two geometric sequences of regularization parameters values,
\begin{equation}
\label{sequences_quasiopt_0}
\sigma = \sigma_{i}=\sigma_{1}\xi_{1}^{i-1},\quad
i=1,2,\ldots,K_{1},\quad\text{and}\quad
\bar{t}=\bar{t}_{j}=\bar{t}_{1}\xi_{2}^{j-1},\quad
j=1,2,\ldots,K_{2},
\end{equation}
with (user-defined) values $\sigma_1$ and $\bar{t}_{1},$ and
$\xi_{1},\xi_{2}\in(0,1)$. The magnitudes of $\nu_{1,\delta}(\sigma_{i},\bar{t}_{j})$ and $\nu_{i^*,\delta}(\sigma_{i},\bar{t}_{j})$ (or $\gamma_\delta({\sigma},\bar{t})$) have to be calculated for such indices $i$ and~$j$. After that for each $\bar{t}_{j}$ we seek for $\sigma_{i_{j}}\in\{\sigma_{i}\}_{i=1}^{K_{1}}$ such that
\begin{subequations}
\label{reconstrunction_alg_}
\begin{gather}
\|\Vec\nu_{\delta}(\sigma_{i_{j}},\bar{t}_{j})-\Vec\nu_{\delta}(\sigma_{i_{j}-1},\bar{t}_{j})\|^\prime =
\min\{\|\Vec\nu_{\delta}(\sigma_{i},\bar{t}_{j})-\Vec\nu_{\delta}(\sigma_{i-1},\bar{t}_{j})\|^\prime,\quad
i=2, 3,\ldots,K_{1}\},\label{reconstrunction_alg_step1_}
\intertext{where $\Vec\nu_\delta$ is the two-component vector defined by $\Vec\nu_\delta \equiv (\nu_{1,\delta},\nu_{i^*,\delta})$ (or $\Vec\nu_\delta \equiv (\nu_{1,\delta},\gamma_{\delta})$), where its 1st component (i.e.~$\nu_{1,\delta}$) is computed via \eqref{8.1}, whereas the 2nd one is then determined by employing relation \eqref{8.5} and the $\nu_{1,\delta}$ value just calculated. One thus must thus strive to calculate the 1st component as accurately as possible, since the 2nd one is also determined through it in this scheme; hence, to make the algorithm more selective with respect to the 1st component of vector $\Vec\nu_\delta$ we use the weighted Euclidean norm, $$\|\Vec\nu_\delta\|^\prime = \bigl((\Upsilon \nu_{1,\delta})^2 + (\nu_{i^*,\delta})^2\bigr)^{1/2}$$ (or, respectively, $$\|\Vec\nu_\delta\|^\prime = \bigl((\Upsilon \nu_{1,\delta})^2 + (\gamma_{\delta})^2\bigr)^{1/2}$$ if identifying $(\nu_1,\gamma)$) with some weight $\Upsilon$, to make the 1st component more dominant (by default in our numerical examples below we have used $\Upsilon=10$ to reinforce the 1st component with (at least) one decimal digit). Next, $\bar{t}_{j_{0}}$ is selected from $\{\bar{t}_{j}\}_{j=1}^{K_{2}}$ such that}
\|\Vec\nu_{\delta}(\sigma_{i_{j_0}},\bar{t}_{j_0})-\Vec\nu_{\delta}(\sigma_{i_{j_0-1}},\bar{t}_{j_0-1})\|^\prime
=
\min\{\|\Vec\nu_{\delta}(\sigma_{i_{j}},\bar{t}_{j})-\Vec\nu_{\delta}(\sigma_{i_{j-1}},\bar{t}_{j-1})\|^\prime,\quad
j=2,3,\ldots,K_{2}\}.\label{reconstrunction_alg_step2_}
\end{gather}
\end{subequations}
At last, $\Vec\nu_{\delta}(\sigma_{i_{j_{0}}},\bar{t}_{j_{0}})$ (with a bit abuse of symbols henceforth simply denoted componentwisely by $(\bar{\nu}_{1,\delta}, \bar{\nu}_{i^*,\delta})$ or $(\bar{\nu}_{1,\delta}, \bar{\gamma}_{\delta})$ for brevity) is chosen as the final output of the reconstruction algorithm. In the next subsection, we demonstrate its performance by numerical examples.


\subsection{Numerical experiments}
\label{s8.2}

Here we discuss \eqref{i.2}-\eqref{i.4} stated  in $\Omega_{T}$ with the terminal time $T=1$ and 2D domain $\Omega = (0,1)\times(0,1)$. Moreover, we focus on the partial case of \eqref{i.2}-\eqref{i.4} reading~as
\begin{equation}\label{9.1}
\begin{cases} \mathbf{D}_{t}u-\Delta u-a_{0}(t)u-\mathcal{K}*\Delta
u-\mathcal{K}*b_{0}u=\sum_{i=1}^{3}g_{i}(x,t) \equiv g(x,t) \quad\text{in}\quad\Omega_{T},\\
u(x,0)=u_{0}(x)\quad\text{in}\quad\bar{\Omega},\qquad \frac{\partial
u}{\partial\mathbf{N}}=0\quad\text{on}\quad\partial\Omega_{T}.
\end{cases}
\end{equation}
The noisy observation \eqref{i.4} is simulated via
\begin{equation}\label{9.2}
\psi_{\delta,
k}=\int_{\Omega}u(x,t_{k})dx+\delta\mathfrak{G}(t_{k}),\quad
k=1,2,\ldots,K
\end{equation}
with a numerical $\delta$ specified below and the noisy data $\mathfrak{G}$ having  the form
\[
\mathfrak{G}(t)=\begin{cases}
t|\ln\, t| &\text{First type noise (\textbf{FTN}) case},\\
t^{\nu_{1}} & \text{Second type noise (\textbf{STN}) case},\\
t^{\nu_{1}}|\ln\, t| &\text{Third type noise (\textbf{TTN}) case}.
\end{cases}
\]

Below we demonstrate the performance of the reconstruction algorithm
discussed in Section~\ref{s8.1} on several examples. In all test
examples the data $u(x,t_{k})$ is generated via explicit solutions
to the respective initial-boundary value problems (naturally, the
corresponding data can also be acquired numerically using
finite-difference schemes such as e.g.~those considered
in~\cite{PSV1, SV, PSV2}, or by leveraging finite-element
approaches~\cite{JinLazZ, SVjcp, SVcsa}).

\begin{example}\label{e.2}
We consider \eqref{9.1} with
\[
M=3,\, \nu_{1}=\nu,\, \nu_{2}=\frac{\nu}{2},\,
\nu_{3}=\frac{\nu}{3}, \,\rho_{1}(t)=1/2,\,
\rho_{2}(t)=-\frac{1}{4},\, \rho_{3}(t)=\frac{1+t^{2}}{4},\,
a_{0}(t)=2,\, b_{0}(t)=0,
\]
and
\begin{flalign*}
\mathcal{K}(t) & =t^{-\gamma}\quad\text{with}\quad \gamma\in(0,1),
\quad u_{0}(x,y)=x^{2}(1-x)^{2} + y^{2}(1-y)^{2}, &\\
g_{1}(x,y,t)&=\bigl(x^{2}(1-x)^{2}+y^{2}(1-y)^{2}\bigr)\biggl[\frac{15}{2}\Gamma(1+\nu)-\frac{15}{4}\frac{\Gamma(1+\nu)}{\Gamma(1+\frac{\nu}{2})}t^{\frac{\nu}{2}}
+\frac{t^{2-\frac{\nu}{3}}}{2\Gamma(3-\frac{\nu}{3})} \\
&\quad +\frac{15\Gamma(1+\nu)}{4\Gamma(1+\frac{2\nu}{3})}t^{\frac{2\nu}{3}}
+\frac{15\Gamma(3+\nu)}{4\Gamma(3+\frac{2\nu}{3})}t^{2+\frac{2\nu}{3}}
 \biggr] ,\\
g_{2}(x,y,t)&=-2[1+15 t^{\nu}][2-6 x+7 x^{2}-2 x^{3}+x^{4}-6 y+7 y^{2}-2 y^{3}+y^{4}] ,\\
g_{3}(x,y,t)&=-4(1-3x+3x^{2}-3y+3y^{2})\Big[\frac{t^{1-\gamma}}{1-\gamma}+15 t^{1-\gamma+\nu}\frac{\Gamma(1-\gamma)\Gamma(1+\nu)}{\Gamma(2+\nu-\gamma)}\Big].
\end{flalign*}
\end{example}
\noindent
That is, one has FDO $\mathbf{D}_{t}u=\frac{1}{2}\mathbf{D}_{t}^{\nu}u-\frac{1}{4}\mathbf{D}_{t}^{\nu/2}u + \mathbf D_t^{\nu/3}\bigl(\frac{1+t^2}{4} u\bigr)$. The direct calculations gives the explicit form of the solution,
\[
u(x,y,t)=\bigl(x^{2}(1-x)^{2}+y^{2}(1-y)^{2}\bigr)[1+ 15 t^{\nu}]
\]
to this initial-boundary value problem and, besides, we have
\[
\int_{\Omega}u_{0}(x,y) dx dy =\frac{1}{15}.\] In this numerical test we test FIP
focusing on numerical recovering $\nu_{1}=\nu$ and $\nu_{3}=\nu/3$ for given values of $\nu$
(naturally, treating $\nu_1$ and $\nu_3$ as unknown when running the reconstruction algorithm).
Note that the condition $\c_{\nu,0}\neq0$ of Theorem~\ref{t2.1} is satisfied ($\c_{\nu,0} = \Gamma(1+\nu)/2\neq0$).
 The corresponding calculation outcomes of the proposed reconstruction algorithm (see Section~\ref{s8.1}) are listed in
 Table~\ref{tab:Ex2_Nn_2D}. Here we have used the low-order regression model with Jacobi polynomials $P_{m}^{(0,-\mathrm{a})}$
  with $0\le m\le 5$, $\mathrm{a}=0.99$, three power
   functions $\{t^{0.25}, t^{0.5}, t^{0.75}\}$ as initial guesses to
   facilitate the small-time asymptotics capturing (i.e.~$\beta_1=0.25$, $\beta_2=0.5$, $\beta_3=0.75$),
   regularizing sequences $\sigma = \{2^{1-i}\}_{i=1}^{K_1}$ and $\bar t = \{2^{1-j} t_K \}_{j=1}^{K_2}$ with $K_1=50$ and $K_2=20$
    (see \eqref{sequences_quasiopt_0}), observation time moments $\{t_k\} = \{k\tau\}_{k=1}^K$ with $K=20$
    and $\tau=10^{-2}$, $\lambda=0.99$ (see \eqref{8.5}). It is clearly observed that decreasing the noise
    level and softening the noise type (from the severe 3rd type to the lighter 2nd and 1st types) improves the calculation results,
    as expected (though the procedure recovers the major fraction order $\nu_1$ very accurately in all cases).
     Note also that for small $\nu$ (namely, $\nu=0.1, 0.3$) and stronger ($\delta=0.01$) 3rd type noise, the recovered
      value $\bar{\nu}_{1,\delta}$ is underestimated, whereas $\bar{\nu}_{3,\delta}$ is significantly overestimated;
      as in the case of a simpler single-parameter recovery described in \cite{PSV2}, this can be fixed by more
      thoroughly tuning the approximation model \eqref{8.2} and the selection algorithm
       (relations \eqref{sequences_quasiopt_0}-\eqref{reconstrunction_alg_} in the present study),
       which may involve, as demonstrated in \cite{PSV2}, making more pertinent initial guesses $\beta_j$ regarding
       the magnitude of fractional orders, increasing the observable dataset and the model dimensionality and/or allowing more
       steps in the selection algorithm -- for instance, taking smaller values of guesses (reasonable
        for recovering small values of fractional orders), e.g.~$\beta_1=0.02$, $\beta_2=0.09$, $\beta_3=0.15$,
        one obtains noticeably more accurate outcomes $(\bar{\nu}_{1,\delta}, \bar{\nu}_{3,\delta}) = (0.0907, 0.0462)$
         and $(\bar{\nu}_{1,\delta}, \bar{\nu}_{3,\delta}) = (0.1902, 0.0903)$ at $\nu=0.1, 0.3$ in this case.
\begin{table}
  \footnotesize
  \begin{center}
    \caption{The reconstructed $(\nu_1, \nu_3)$ values in Example~\ref{e.2}.}
    \label{tab:Ex2_Nn_2D}
 \begin{tabular}{@{}c|c|c|c|c|c|c|c|c|c|c|c|c@{}}
      \hline
      \multicolumn{6}{c|}{$\delta=0.01 \ $} & \multicolumn{6}{c|}{$\ \delta=0.001$} & \\
      \hline
      \multicolumn{2}{c|}{\textbf{FTN}} & \multicolumn{2}{c|}{\textbf{STN}} & \multicolumn{2}{c|}{\textbf{TTN}} & \multicolumn{2}{c|}{\textbf{FTN}} & \multicolumn{2}{c|}{\textbf{STN}} & \multicolumn{2}{c|}{\textbf{TTN}} & \\
      \hline
      $\bar{\nu}_{1,\delta}$ & $\bar{\nu}_{3,\delta}$ & $\bar{\nu}_{1,\delta}$ & $\bar{\nu}_{3,\delta}$ & $\bar{\nu}_{1,\delta}$ & $\bar{\nu}_{3,\delta}$ & $\bar{\nu}_{1,\delta}$ & $\bar{\nu}_{3,\delta}$ & $\bar{\nu}_{1,\delta}$ & $\bar{\nu}_{3,\delta}$ & $\bar{\nu}_{1,\delta}$ & $\bar{\nu}_{3,\delta}$ & $\nu$ \\
      \hline
0.0998 & 0.0279 & 0.0977 & 0.0320 & 0.0902 & 0.0792 & 0.1000 & 0.0305 & 0.0998 & 0.0309 & 0.0990 & 0.0367 & 0.1 \\
0.1999 & 0.0608 & 0.1977 & 0.0662 & 0.1902 & 0.1027 & 0.2000 & 0.0650 & 0.1998 & 0.0654 & 0.1990 & 0.0696 & 0.2 \\
0.2996 & 0.1091 & 0.2980 & 0.1106 & 0.2902 & 0.1210 & 0.3000 & 0.1004 & 0.2998 & 0.1007 & 0.2990 & 0.1037 & 0.3 \\
0.3995 & 0.1513 & 0.3981 & 0.1538 & 0.3902 & 0.1526 & 0.3999 & 0.1346 & 0.3998 & 0.1349 & 0.3990 & 0.1369 & 0.4 \\
0.4998 & 0.1814 & 0.4985 & 0.1807 & 0.4903 & 0.1802 & 0.5000 & 0.1681 & 0.4998 & 0.1681 & 0.4990 & 0.1684 & 0.5 \\
0.5987 & 0.2074 & 0.5980 & 0.2039 & 0.5902 & 0.2159 & 0.5998 & 0.1982 & 0.5998 & 0.1981 & 0.5990 & 0.1992 & 0.6 \\
0.6983 & 0.2476 & 0.6980 & 0.2444 & 0.6902 & 0.2519 & 0.6997 & 0.2325 & 0.6998 & 0.2323 & 0.6990 & 0.2334 & 0.7 \\
0.7953 & 0.2756 & 0.7977 & 0.2908 & 0.7902 & 0.2906 & 0.7996 & 0.2708 & 0.7998 & 0.2706 & 0.7990 & 0.2722 & 0.8 \\
0.8932 & 0.3192 & 0.8973 & 0.3182 & 0.8902 & 0.3353 & 0.8993 & 0.2993 & 0.8997 & 0.2990 & 0.8990 & 0.3016 & 0.9 \\
 \end{tabular}
  \end{center}
\end{table}

\begin{example}
\label{e.2.SIP}
To test the proposed algorithm for SIP in recovering the pair $(\nu_1,\gamma)$ let us modify the previous
Example~\ref{e.2} by fixing $\gamma=0.9$ and setting $b_0(t)=15$ (note that in the previous example with
zero $b_0$, the essential condition $\c_{1,0}\neq 0$ of the solvability Theorem~\ref{t2.2} for SIP was
 not satisfied). All other coefficients remain the same, except for the right-hand side $g(x,y,t)$, to which the additional term
$$
-t^{1-\gamma}\Bigl(\frac{1}{1-\gamma}+15 t^\nu\frac{\Gamma(1-\gamma)\Gamma(1+\nu)}{\Gamma(2-\gamma+\nu)}\Bigr)
$$
is added. The previous settings of the (low-order) reconstruction
model and algorithm are also retained; $\mu=10^{-2}$. The
corresponding outcomes of the proposed reconstruction algorithm (see
Section~\ref{s8.1}) are listed in Table~\ref{tab:Ex2_Nn_2D_SIP}. It
can be seen that for small $\nu$ (such as e.g.~$\nu\approx0.1$) the
recovered $\bar{\gamma}_{\delta}$ value noticeably underestimates
(by $\approx0.1$) the true $\gamma=0.9$; this, however, as in the
previous example, can be remedied by finer tuning of the model and
the selection algorithm.
\begin{table}
  \footnotesize
  \begin{center}
    \caption{The reconstructed $(\nu_1,\gamma)$ values in Example~\ref{e.2.SIP}.}
    \label{tab:Ex2_Nn_2D_SIP}
 \begin{tabular}{@{}c|c|c|c|c|c|c|c|c|c|c|c|c@{}}
      \hline
      \multicolumn{6}{c|}{$\delta=0.01 \ $} & \multicolumn{6}{c|}{$\ \delta=0.001$} & \\
      \hline
      \multicolumn{2}{c|}{\textbf{FTN}} & \multicolumn{2}{c|}{\textbf{STN}} & \multicolumn{2}{c|}{\textbf{TTN}} & \multicolumn{2}{c|}{\textbf{FTN}} & \multicolumn{2}{c|}{\textbf{STN}} & \multicolumn{2}{c|}{\textbf{TTN}} & \\
      \hline
      $\bar{\nu}_{1,\delta}$ & $\bar{\gamma}_{\delta}$ & $\bar{\nu}_{1,\delta}$ & $\bar{\gamma}_{\delta}$ & $\bar{\nu}_{1,\delta}$ & $\bar{\gamma}_{\delta}$ & $\bar{\nu}_{1,\delta}$ & $\bar{\gamma}_{\delta}$ & $\bar{\nu}_{1,\delta}$ & $\bar{\gamma}_{\delta}$ & $\bar{\nu}_{1,\delta}$ & $\bar{\gamma}_{\delta}$ & $\nu$ \\
      \hline
0.0998 & 0.8121 & 0.0977 & 0.8120 & 0.0901 & 0.8096 & 0.1000 & 0.8121 & 0.0998 & 0.8121 & 0.0990 & 0.8098 & 0.1 \\
0.3962 & 0.8525 & 0.3952 & 0.8523 & 0.3866 & 0.8524 & 0.3963 & 0.8525 & 0.3962 & 0.8524 & 0.3953 & 0.8525 & 0.4 \\
0.6011 & 0.8684 & 0.6004 & 0.8680 & 0.5919 & 0.8688 & 0.6016 & 0.8682 & 0.6015 & 0.8682 & 0.6006 & 0.8683 & 0.6 \\
0.8940 & 0.8972 & 0.8977 & 0.8968 & 0.8895 & 0.8971 & 0.8987 & 0.8970 & 0.8991 & 0.8970 & 0.8982 & 0.8970 & 0.9 \\
 \end{tabular}
  \end{center}
\end{table}

\end{example}



\end{document}